\documentclass[11pt,twoside]{amsart}
\usepackage[all]{xy}
        \usepackage {amssymb,latexsym,amsthm,amsmath,mathtools}
\usepackage{dsfont}
        \usepackage{enumitem,color}
        
        \topmargin=1.2cm
        \textheight = 8.3in
        \textwidth = 5.8in
        \setlength{\oddsidemargin}{.8cm}
        \setlength{\evensidemargin}{.8cm}

\usepackage{longtable}
\usepackage{mathtools}

\long\def\symbolfootnote[#1]#2{\begingroup%
\def\thefootnote{\fnsymbol{footnote}}\footnote[#1]{#2}\endgroup}

\newcommand{\Sp}{\ensuremath{\textup{Sp}(2n,q)}}
\newcommand{\fq}{\ensuremath{\mathbb{F}_q}}
\newcommand{\fqn}{\ensuremath{\mathbb{F}}}
\newcommand{\tr}{\ensuremath{{}^t\!}}

\newcommand{\pa}{\mathcal{P}}
\newcommand{\ssp}{\mathcal{D}}
\newcommand{\Aut}{\textup{Aut}}

\newcommand{\GL}{\textup{GL}}
\newcommand{\SP}{\textup{Sp}}
\newcommand{\On}{\textup{O}}
\newcommand{\e}{\epsilon}
\newcommand{\U}{\textup{U}}

\newcommand{\im}{\textup{im}}

\makeatletter
\def\imod#1{\allowbreak\mkern10mu({\operator@font mod}\,\,#1)}
\makeatother
\makeatletter
\renewcommand*\env@matrix[1][*\c@MaxMatrixCols c]{%
  \hskip -\arraycolsep
  \let\@ifnextchar\new@ifnextchar
  \array{#1}}
\makeatother

\newtheorem{theorem}{Theorem}[section]
\newtheorem{lemma}[theorem]{Lemma}
\newtheorem{corollary}[theorem]{Corollary}
\newtheorem{proposition}[theorem]{Proposition}
\newtheorem*{theorem*}{Theorem}
\theoremstyle{definition}
\newtheorem{ques}[theorem]{Question}
\newtheorem{define}[theorem]{Definition}
\newtheorem{remark}[theorem]{Remark}
\newtheorem{example}[theorem]{Example}
\newtheorem{notation}[theorem]{Notation}
\numberwithin{equation}{section}

\newcommand{\ignore}[1]{}

\newcommand{\mynote}[1]{}
\begin{document}
\setcounter{section}{0}
\title[Powers in finite orthogonal and symplectic groups]{Powers in finite orthogonal and symplectic groups: A generating function approach}
\author[Panja S.]{Saikat Panja}
\email{panjasaikat300@gmail.com}
\author[Singh A.]{Anupam Singh}
\email{anupamk18@gmail.com}
\address{IISER Pune, Dr. Homi Bhabha Road, Pashan, Pune 411 008, India}
\thanks{The first named author has been supported by NBHM PhD scholarship. The second named author is funded by SERB through CRG/2019/000271 for this research.}
\subjclass[2010]{20G40, 20P05}
\today
\keywords{orthogonal, symplectic, power map}

\setcounter{tocdepth}{1}
\begin{abstract}
For an integer $M\geq 2$ and a finite group $G$, an element $\alpha\in G$ is called an $M$-th power if it satisfies $A^M=\alpha$ for some $A\in G$. In this article, we
will deal with the case when $G$ is finite symplectic or orthogonal group over a field 
of order $q$. We introduce the notion of $M^*$-power SRIM polynomials. This, amalgamated with the concept of $M$-power polynomial, we 
provide the complete classification of the conjugacy classes of regular semisimple, semisimple, cyclic and regular elements in $G$, which are $M$-th powers, when $(M,q)=1$. The approach here is of generating functions, as worked on by Jason Fulman, Peter M. Neumann, and Cheryl Praeger in the memoir ``A generating function
approach to the enumeration of matrices in classical groups over finite fields". As a byproduct, we obtain the corresponding probabilities, in terms of generating functions.
\end{abstract}

\maketitle
\section{Introduction}
\subsection{Question in the general context} The motivation behind this work dates back to the work of two of the great mathematicians of the last century, A. Borel and E. Waring. Given an element $w\in \mathcal{F}_l$ (the free group on $l$ generators), the map associated with
$w$ by plugging elements of $G^l$ in $w$, is called a \emph{word map}.
It was proved by A. Borel (in \cite{Bo83}) (and later by Larsen independently in \cite{La04}) that given a semisimple algebraic group $\mathcal{G}$ and a word map $w: \mathcal{G}^l\longrightarrow \mathcal{G}$, it is a dominant map. The image of $w$ will be denoted as $w(\mathcal{G})$ hereafter. The result due to Borel reveals the surprising result that $w(\mathcal{G})^2=\mathcal{G}$. On the other hand,
it was E. Waring, who mentioned in his paper ``Meditationes Algebraicae"
that,
``every natural number is a sum of at most 9 cubes;
every natural number is a sum of at most 19 fourth powers;
and so on". These two problems gave rise to the following general question in the context of group theory.
\noindent
\begin{ques}
Given a group $G$ and a word $w$ on $l$ generators, does there exists a $m(w)\in\mathbb{N}$ such that $(w(G))^{m(w)}=G$?
\end{ques}
\noindent 
This question is known as \emph{Waring type problem} in group theory and 
has attracted an ample amount of attention from mathematicians in the past half-century.  Substantial progress has been made and many fundamental questions are solved,
using a wide spectrum of tools, including representation theory, probability, and
geometry. A recent breakthrough in this direction is the affirmation of
 Ore's conjecture (which states that the commutator map corresponding to the word $xyx^{-1}y^{-1}\in F_2$ is surjective in the case of finite non-abelian simple groups), by Liebeck, O'Brien,
 Shalev and Tiep \cite{LiObSh10}, using the methods from character theory. 
 They proved that if $G$ is any quasisimple classical group over a finite field, 
 then every element of $G$ is a commutator, using character-theoretic results due to 
 Frobenius. In \cite{LiObSh12}, the results about the product of squares in the finite non-abelian  simple groups are proved.
 It was proved that 
 every element of a non-abelian finite simple group $G$ is a product of two squares. For a survey of these results and further problems in the context of group theory, we refer the reader to the excellent survey article due to Shalev \cite{Sh09}.

In this article, we will be interested in the map associated with $w=x^M\in\mathcal{F}_1$ where $M\geq 2$ is an integer. This is a part of the ongoing project, where we intend to draw a conclusion about the image size of the power maps for finite groups of Lie type. The complete solution to this for the case of $\textup{GL}(n,q)$ has been described in \cite{ks}. But the existence of a root in the general linear group does not
guarantee the existence of a root in the symplectic or orthogonal group. This has been a great motivation behind this work.
The asymptotics of the powers in finite reductive groups has been pursued in \cite{krs1}. Indeed, the authors therein estimate the proportion of regular semisimple, semisimple and regular which are $M$-th powers in the concerned groups, as $q$ tends to infinity.

We will be giving the exact ratio for the symplectic and orthogonal groups over a finite field $\fq$. We restrict ourselves to the case $(M,q)=1$, as the case $(M,q)\neq 1$ is more intricate and will be followed up 
in future work. 
Our main results are Theorem \ref{012} (and Theorem \ref{025}), Theorem \ref{016} (and Theorem \ref{028}), Theorem \ref{024} (and Theorem \ref{029}, Theorem \ref{031}) concerning generating function for the probability of
a separable, semisimple, cyclic, and regular element respectively to be an $M$-th power in symplectic groups (and orthogonal groups). 
In follow-up work, we will be also looking into the case of power maps in exceptional groups of Lie type. 

\subsection{Methodology}
We take the methods of statistical group theory, where generating functions play a key role. Before describing this, note that if $x\in G$ is an $M$-th power, then so are all conjugates of $x$. Hence to solve the question, it is admirable that we first find the conjugacy classes, which are $M$-th powers. 
The conjugacy class of finite orthogonal and symplectic groups is given by the combinatorial data consisting of self-reciprocal monic polynomials and signed symplectic or orthogonal partitions.
 The first description of conjugacy classes in these groups was discussed in the paper of Wall (see \cite{wa}). The enumeration for the conjugacy classes is done with the machinery of generating functions along with the results of G. Wall.

 Coming back to the viewpoint of statistical group theory, another way of looking into conjugacy classes is via cycle indices, which was introduced by P\'{o}lya for the symmetric group in the paper \cite{po}. This can be briefly 
described as follows. For $\pi\in S_n$, let $a_i(\pi)$ denotes the number of $i$-cycles
in $\pi$. 
Recall that in $S_n$, the number of elements with $a_i$ many $i$-cycles is given by
$n!/\prod_{i=1}^{n}a_i!i^{a_i}$. This along with the Taylor expansion of $e^z$ gives that
\begin{align*}
\displaystyle\sum_{n=0}^{\infty}\dfrac{u^n}{n!}\sum_{\pi\in S_n}^{}\prod_{i}^{}x_i^{a_i(\pi)}=\prod_{m=1}^{\infty}e^{x_mu^m/m}.
\end{align*}	
Since then the concept of cycle index has been developed for various groups and has been used to derive exemplary results. For example in \cite{f1}  cycle indices for the finite classical groups more precisely for unitary, symplectic, and orthogonal groups are studied. In the symplectic and in the orthogonal case it is assumed that $q$ is odd. For the orthogonal groups a mixed cycle index is defined, taking into account both groups $\On^+(n,q)$ and $\On^-(n,q)$ at the same time. In the memoir \cite{fnp}, the authors consider probabilistic properties for classical groups over a fixed finite field of cardinality $q$ when the rank goes to infinity. 
The results are about the asymptotics of corresponding probability.
These results are very important and already have been used in many contexts, including the design of algorithms in group theory, random generation of simple groups, monodromy groups of curves, and derangements. Similar works have been pursued in an enormous amount of texts and a philomath is suggested to look
into 
\cite{br1},
 \cite{br2}, \cite{br3},
\cite{f1}, \cite{f2}, \cite{fst} to have better understanding of this direction. 
We will be using these cycle indices for finite orthogonal and
symplectic groups, drawn from \cite{f2} and some special polynomials to conclude our result. For reasons coming from the theory of cycle indices, the results for orthogonal groups rely on the
results about symplectic groups.

\subsection{Organization of the paper}
Throughout the article $q=p^a$, a power of a prime. In Section $2$, we will be recalling the description of the central objects of this paper, the finite orthogonal and symplectic groups.
The key role in calculating the generating functions 
are played by the conjugacy classes and centralizers of these groups, which will be discussed in Section $3$. 
In Section $4$, we introduce the notion of $M^*$ polynomials and count the number of such polynomials over a finite field. This section has been coupled with examples to have a better understanding of the proposed definitions.
Section $5,6,7$ and $8$ are devoted to calculating generating functions for separable, semisimple, cyclic, and regular matrices in symplectic and orthogonal groups respectively.
This article is based on a part of Ph.D. thesis of the first named author. A more elaborate versions of the results can be found in \cite{p22}.

%
%
\subsection*{Acknowledgment}
The authors thank B. Sury and Amit Kulshrestha for their interest in this work.

\section{Orthogonal and Symplectic Groups}
In this section, we briefly recall the orthogonal and symplectic groups over a finite field 
$\mathbb F_q$, mainly to set the notation for the rest of the paper. The treatment here closely follows that of
\cite{fnp}, \cite{wa} and \cite{bg}.

\subsection{Orthogonal groups} Let $V$ be an $m$-dimensional vector space over a finite field $\mathbb F_q$. Then there are at most two non-equivalent non-degenerate quadratic forms on $V$. The orthogonal group consists of elements of $\GL(V)$ which preserve a non-degenerate quadratic form $Q$. 

When $m=2n$ for some $n\geq 1$, up to equivalence there are two such forms denoted as $Q^+$ and $Q^-$. These are as follows. Fix $a\in \mathbb F_q$ such that $t^2 + t + a \in \fq[t]$ is irreducible. Then the two non-equivalent forms are given by
\begin{enumerate}
\item $Q^+(x_1, \ldots, x_m) = x_{1}x_{2} + x_3x_4+\cdots + x_{2n-1}x_{2n}$, and
\item $Q^-(x_1,\ldots, x_m)= x_1^2 + x_1x_2+ax_2^2 + x_3x_4+\cdots + x_{2n-1}x_{2n}$.
\end{enumerate}
The orthogonal group preserving $Q^+$ will be denoted as $\On^+(m, q)$, and the orthogonal group preserving $Q^-$ will be denoted as $\On^-(m, q)$.

When $m=2n+1$, for $q$ even there is only one (up to equivalence) quadratic form, namely 
$Q(x_1, \ldots, x_m)=x_1^2+\sum\limits_{i=1}^n x_{2i}x_{2i+1}$ and hence there is only one (up to conjugacy) orthogonal group. If $q$ is odd, then up to equivalence, there are two non-degenerate quadratic forms. But, these two forms give isomorphic orthogonal groups. We take $Q(x_1, \ldots, x_m)= x_1^2+\cdots + x_m^2$. Thus, in case $m=2n+1$, up to conjugacy, we have only one orthogonal group. This will be denoted as $\On^0(m, q)$. 

As it is common in literature, we will use the notation $\On^\epsilon(m, q)$ to denote any of the orthogonal group above where $\epsilon \in \{0, +, -\}$. With respect to an appropriate basis, we will fix the matrices of the symmetric bilinear forms (associated to the quadratic forms $Q^{\epsilon}$) as follows: 
$$J_{0}=\begin{pmatrix} 0 & 0 & \Lambda_n \\ 0 & \alpha & 0\\ \Lambda_n & 0 & 0 
\end{pmatrix},  J_{+}=\begin{pmatrix} 0& \Lambda_n\\ \Lambda_n & 0
\end{pmatrix}, J_{-}=\begin{pmatrix} 0 & 0 & 0 & \Lambda_{n-1}\\
0 & 1 & 0 & 0 \\ 0 & 0 & -\delta & 0 \\ \Lambda_{n-1}& 0 & 0 & 0
\end{pmatrix}$$ 
where $\alpha\in\fq^\times,\delta\in\fq\setminus\fq^2$, and $\Lambda_l =\begin{pmatrix}
0 & 0 & \cdots & 0 & 1\\ 0 & 0 & \cdots & 1 & 0\\
\vdots & \vdots & \reflectbox{$\ddots$} & \vdots & \vdots\\
1 & 0 & \cdots & 0 & 0 \end{pmatrix}$, an $l\times l$ matrix. Then, the orthogonal group in matrix form is 
$$\On^\e(m, q) = \{A \in \GL(m,q)\mid \tr{A}J_{\e}A=J_{\e}\}.$$
Adapting the notations of~\cite{fnp}, we define the type of an orthogonal space as follows.
\begin{define} The type of an orthogonal space $(V,Q)$ of dimension $m$ is
\begin{align*}
\tau(V)=\begin{cases}
1 & \text{if } m \text{ is odd, }q\equiv 1\imod 4, Q\sim\sum x_i^2,\\
-1 & \text{if } m \text{ is odd, }q \equiv 1\imod 4, Q\sim b\sum x_i^2,\\
\iota^m & \text{if } m \text{ is odd, }q\equiv 3\imod 4, Q\sim\sum x_i^2,\\
(-\iota)^m & \text{if } m \text{ is odd, }q\equiv 3\imod 4, Q\sim b\sum x_i^2,
\end{cases}
\end{align*}
where $\iota\in\mathbb{C}$ satisfies $\iota^2=-1$, and $b\in\fq\setminus\fq^2$.
\end{define}
\noindent More generally, when $V$ is orthogonal direct sum $V_1\oplus V_2 \oplus \cdots \oplus V_l$, the type is defined by $\tau(V)=\prod\limits_{i=1}^l\tau(V_i)$.

\subsection{Symplectic group}
Let $V$ be a vector space of dimension $2n$ over $\mathbb F_q$. There is a unique non-degenerate alternating bilinear form on $V$. We consider the form given by 
$$\left<(x_i)_{i=1}^{2n}, (y_j)_{j=1}^{2n} \right> = \sum_{j=1}^{n}x_jy_{2n+1-j}-\sum_{i=0 } ^{n-1}x_{2n-i}y_{i+1}.$$ 
The symplectic group is the subgroup of $\GL(V)$ consisting of those elements which preserve this alternating form on $V$. By fixing an appropriate basis, the matrix of the form is $J= \begin{pmatrix} 0 & \Lambda_n\\ -\Lambda_n & 0 \end{pmatrix}$ where 
$\Lambda_n=\begin{pmatrix} 0 & 0 & \cdots & 0 & 1\\
0 & 0 & \cdots & 1 & 0\\ \vdots & \vdots & \reflectbox{$\ddots$} & \vdots & \vdots\\
1 & 0 & \cdots & 0 & 0 \end{pmatrix}$ and 
$$\Sp =\{A \in \GL(2n,q) \mid \tr AJA=J\}.$$ 
Since all alternating forms are equivalent over $\mathbb F_q$, the symplectic groups obtained with respect to different forms are conjugate within $\GL(2n, q)$. 

\subsection{Main Question}
We recall some definitions here. Let $G=\Sp$ or $\On^\epsilon(m, q)$ defined as above where $m=2n$ or $2n+1$ depending on $m$ is even or odd. Then, $n$ is the Lie rank of $G$. An element $A\in G$ is said to be 
\begin{enumerate}
\item \textbf{separable} if the characteristic polynomial of $A$ is separable over $\mathbb F_q$,
\item \textbf{semisimple} if the minimal polynomial of $A$ is separable over $\mathbb F_q$,
\item \textbf{cyclic} if the minimal polynomial of $A$ is same as the characteristic polynomial of $A$, and,  
\item  \textbf{regular} if the centraliser of $A$ in $G$ has dimension equal to the Lie rank of $G$.
\end{enumerate}
We say an element $g\in G$ is an $M^{th}$ power in $G$ or it has $M^{th}$-root in $G$ if the equation $X^M=g$ has a solution. As in~\cite{ks}, we would like to study if separable, semsimple, cyclic or regular elements are $M^{th}$ powers. More precisely, we are interested in the generating functions of the following quantities for these groups:
\begin{enumerate}
\item $cs^M_G(n, q)$, $css^M_G(n, q)$, $cc^M_G(n, q)$ and $cr^M_G(n, q)$ which denote the ratio of the number of separable, semisimple, cyclic and regular conjugacy class, respectively, of $G$ which are $M$-th power, to the number of all conjugacy classes.
\item $s^M_G(n, q)$, $ss^M_G(n, q)$, $c^M_G(n, q)$ and $r^M_G(n, q)$ which denote the ratio of the number of separable, semisimple, cyclic and regular elements in $G$ which are $M$-th power, with respect to the $|G|$.
\end{enumerate}
Further, the associated generating functions will be defined in the later sections accordingly.
The main question here is to determine these generating functions using the canonical forms of elements in these groups.

\section{Conjugacy classes in Orthogonal and Symplectic group}
As mentioned in the introduction, if an element is an $M$-th power, so is the conjugacy class containing that element. Thus, it is necessary that we know the conjugacy classes in detail for these groups.
This is achieved using the combinatorial data consisting of monic polynomials, and 
signed partitions attached to these polynomials. This is the classic work of \cite{wa}.
We recall briefly the results therein, which will be used further.
\begin{define}
A \textbf{symplectic signed partition} is a partition of a number $k$, such that the odd parts 
have even multiplicity and even parts have a sign associated with it. The set
of all symplectic signed partitions will be denoted as $\ssp_{\SP}$.
\end{define}
\begin{define}
An \textbf{orthogonal signed partition} is a partition of a number $k$, such that all even parts have even multiplicity, and all odd parts have a sign associated with it. The set
of all orthogonal signed partitions will be denoted as $\ssp_{\On}$.
\end{define}
\begin{example}\begin{enumerate}
\item The partition $6^{+2} 3^4 2^{-3} 1^2$ is a symplectic signed partition of $32$.
\item The partition $7^{+2}7^{-2} 3^{+3} 2^6 1^{-2}$ is an orthogonal signed partition of $51$.
\end{enumerate}
\end{example}
\begin{define}
The \textit{dual} of a monic degree $r$ polynomial $f(x)\in k[x]$ satisfying $f(0)\neq 0$, is the polynomial given by $f^*(x)=f(0)^{-1}x^rf(x^{-1})$. 
The polynomial $f$ will be called \textit{$*$-symmetric} (or \emph{self reciprocal}) if $f=f^*$. A monic polynomial $f(x)\in \fq[x]$, will be called to be \textbf{$*-$irreducible}
 if and only if it does not have any proper self-reciprocal factor.
\end{define}

It can be shown that characteristic polynomial of symplectic or orthogonal matrix is self reciprocal. Indeed if $\lambda$ is a root of the characteristic 
polynomial of a symplectic (or orthogonal) matrix, so is $\lambda^{-1}$. We follow 
J. Milnor's terminology \cite{mi} to distinguish between the $*$-irreducible factors of the characteristic polynomials. We call a $*$-irreducible polynomial $f$ to be

\noindent(1) Type $1$\index{Type $1$ polynomial} if $f=f^*$ and $f$ is irreducible polynomial of even degree;\

\noindent(2) Type $2$\index{Type $2$ polynomial} if $f=gg^*$ and $g$ is irreducible polynomial satisfying $g\neq g^*$;\

\noindent(3) Type $3$\index{Type $3$ polynomial} if $f(x)=x\pm 1$.\

According to \cite{wa}, \cite{sh}, the conjugacy classes of $\Sp$ are parameterized by the functions 
$\lambda:\Phi\rightarrow\pa^{2n}\cup\ssp_{\SP}^{2n}$, where $\Phi$ denotes the set of all
 monic,  non-constant, irreducible polynomials, $\pa^{2n}$ is
 the set of all partitions 
of $1\leq k\leq 2n$ and $\ssp_{\SP}^{2n}$ is the set of all symplectic
 signed partitions 
of $1\leq k\leq 2n$. Such a $\lambda$ represent a conjugacy class of $\Sp$
if and only if \begin{enumerate}
\itemindent=-13pt
\item $\lambda(x)=0$,
\item $\lambda_{\varphi^*}=\lambda_\varphi$,
\item $\lambda_\varphi\in\ssp^n_{\SP}$ iff $\varphi=x\pm 1$ (we distinguish this $\lambda$, by denoting it $\lambda^\pm$),
\item $\displaystyle\sum_{\varphi}|\lambda_\varphi|\textup{deg}(\varphi)=2n$.
\end{enumerate}
Also from \cite{wa}, \cite{sh}, we find out that similar kind of statement is true for the groups $\On^\e(n,\fq)$. The conjugacy classes of $\On^\e(n,\fq)$ are parameterized by the functions 
$\lambda:\Phi\rightarrow\pa^{n}\cup\ssp_{\On}^{n}$, where $\Phi$ denotes the set of all
 monic,  non-constant, irreducible polynomials, $\pa^{n}$ is
 the set of all partitions 
of $1\leq k\leq n$ and $\ssp_{\On}^{n}$ is the set of all symplectic
 signed partitions 
of $1\leq k\leq n$. Such a $\lambda$ represent a conjugacy class of $\Sp$
if and only if \begin{enumerate}
\itemindent=-13pt
\item $\lambda(x)=0$,
\item $\lambda_{\varphi^*}=\lambda_\varphi$,
\item $\lambda_\varphi\in\ssp^n_{\On}$ iff $\varphi=x\pm 1$ (we distinguish this $\lambda$, by denoting it $\lambda^\pm$),
\item $\displaystyle\sum_{\varphi}|\lambda_\varphi|\textup{deg}(\varphi)=n$.
\end{enumerate}
Class representative corresponding to given data can be found in \cite{ta1}, \cite{ta2}, \cite{glo} 
and we will mention them whenever needed. We mention the
following results about the conjugacy class size (and hence the size of the centraliser)
 of elements corresponding to given data
, which can be found in \cite{wa}.
\begin{lemma}[\cite{wa}, pp. $36$]\label{002}
Let $X\in\Sp$ be a matrix corresponding to the data $\Delta_X=\{(\phi,\mu_\phi)
:\phi\in\Phi_X\subset \Phi\}$. Then the conjugacy class of
$X$ in $\Sp$ is of size $\dfrac{|\Sp|}{\prod\limits_{\phi}B(\phi)}$
where $B(\phi)$ and $A(\phi^\mu)$ are defined as follows
\begin{align*}
A(\phi^\mu)=\begin{cases}
|\U(m_\mu,Q)| & \text{if } \phi(x)=\phi^*(x)\neq x\pm 1\\
|\GL(m_\mu,Q)|^\frac{1}{2}& \text{if } \phi\neq\phi^*\\
|\SP(m_\mu,q)| &\text{if }\phi(x)=x\pm 1,~\mu\text{ odd}\\
|q^{\frac{1}{2}m_\mu}\On^\e(m_\mu,q)|&\text{if }\phi(x)=x\pm 1,~\mu\text{ even}
\end{cases},
\end{align*}
where $\e$ gets determined by the sign of the corresponding partition, 
$Q=q^{|\phi|}$, $m_\mu=m(\phi^\mu)$ and
\begin{align*}
B(\phi) = Q^{\sum\limits_{\mu<\nu}\mu m_\mu m_\nu+
\frac{1}{2}\sum\limits_{\mu}(\mu-1)m_\mu^2}\prod\limits_{\mu}A(\phi^\mu).
\end{align*}
\end{lemma}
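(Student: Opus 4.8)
The statement to prove (Lemma \ref{002}) is Wall's formula for the centralizer order of an element of $\Sp$ in terms of its conjugacy-class data. The plan is to build the centralizer from the local contributions of each $*$-irreducible polynomial $\phi$ appearing in the characteristic polynomial, reducing the global computation to a product over $\Phi_X$.

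\medskip
\noindent\textbf{Step 1: Primary decomposition and reduction to a single polynomial.}
First I would decompose $V = \fq^{2n}$ under the action of $X$ into the $X$-invariant subspaces $V_\phi$ corresponding to the distinct $*$-irreducible factors $\phi$ of the characteristic polynomial. Because $\phi$ and $\phi^*$ carry the same partition and the form pairs $V_\phi$ with $V_{\phi^*}$, one checks that the decomposition $V = \bigoplus_\phi V_\phi$ is an \emph{orthogonal} direct sum with respect to the symplectic form $J$: if $\phi\ne\phi^*$, then $V_\phi$ and $V_{\phi^*}$ are totally isotropic and dually paired, and their sum is a nondegenerate symplectic subspace; if $\phi=\phi^*$, then $V_\phi$ is itself nondegenerate. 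Since any element of the centralizer $C_{\Sp}(X)$ must preserve each generalized eigenspace $V_\phi$ (these are intrinsic to $X$), we obtain $C_{\Sp}(X) \cong \prod_\phi C_{U(V_\phi)}(X|_{V_\phi})$, where $U(V_\phi)$ is the isometry group of the restricted form. This is what justifies the factored shape $\prod_\phi B(\phi)$, and it remains to identify $B(\phi)$ with the order of the local centralizer on $V_\phi$.

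\medskip
\noindent\textbf{Step 2: The local centralizer on a single primary component.}
Fix $\phi$ and let $W = V_\phi$ with the induced form. The $X$-module structure on $W$ is $\bigoplus_\mu (\fq[x]/(\phi^\mu))^{m_\mu}$ for multiplicities $m_\mu = m(\phi^\mu)$. The centralizer of $X|_W$ in $\GL(W)$ is a well-understood parabolic-type group: it is an extension of $\prod_\mu \GL(m_\mu, \fq[x]/(\phi))$ (the ``Levi'' part, acting on the $\mu$-th homogeneous pieces modulo lower terms) by a unipotent radical whose order is exactly the power $Q^{\sum_{\mu<\nu}\mu m_\mu m_\nu + \frac12\sum_\mu(\mu-1)m_\mu^2}$ appearing in $B(\phi)$ — here $Q = q^{\deg\phi}$ when $\phi$ is irreducible, and the exponent counts Hom's between the indecomposable Jordan-type blocks over the residue ring. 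Then I would intersect this with the isometry condition. The key structural fact is that the isometry condition does \emph{not} touch the unipotent radical (it survives intact) but cuts the Levi factor $\GL(m_\mu, \cdot)$ down to a smaller classical group $A(\phi^\mu)$: for $\phi = \phi^*$ irreducible of even degree (Type 1) the form descends to a Hermitian form over the degree-$(\deg\phi/2)$ subfield, giving $U(m_\mu, Q)$ with $Q = q^{\deg\phi}$ the square of the Hermitian field size; for $\phi = gg^*$ with $g\ne g^*$ (Type 2) the two halves $V_g, V_{g^*}$ are dually paired so the isometry group on that block is just $\GL(m_\mu, q^{\deg g}) = \GL(m_\mu, Q)^{1/2}$; for $\phi = x\pm 1$ (Type 3) the form on the $\phi^\mu$-isotypic piece is alternating when $\mu$ is odd, giving $\SP(m_\mu, q)$, and is (an alternating-form twist of) a quadratic form when $\mu$ is even, giving $q^{m_\mu/2}\On^\e(m_\mu, q)$ with the sign $\e$ recorded by the signed partition. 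Multiplying the Levi contributions over $\mu$ gives $\prod_\mu A(\phi^\mu)$, and together with the unipotent radical we recover $B(\phi)$ exactly.

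\medskip
\noindent\textbf{Main obstacle.}
The delicate part is Step 2, specifically the bookkeeping of \emph{how} the symplectic form restricts to the $\phi$-primary, $\mu$-homogeneous components and why the isometry constraint decouples cleanly into ``radical untouched, Levi reduced to $A(\phi^\mu)$''. One must set up explicit bases for $\fq[x]/(\phi^\mu)$ (or work with the pairing between $V_\phi$ and $V_{\phi^*}$ via the companion-matrix model) and compute the Gram matrix of the induced form in block-triangular shape, then show the reductive part of the isometry group is precisely the product of the $A(\phi^\mu)$'s — in particular handling the parity dichotomy for Type 3 (odd $\mu$ giving a symplectic form, even $\mu$ giving an orthogonal form of the prescribed type $\e$) is where the combinatorial data of signed partitions enters and where sign-tracking is easiest to get wrong. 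For the unipotent exponent, the count $\sum_{\mu<\nu}\mu m_\mu m_\nu + \frac12\sum_\mu(\mu-1)m_\mu^2$ is a standard dimension count for $\mathrm{Hom}_{\fq[x]}$ between truncated modules, but verifying it survives the isometry intersection unchanged requires the explicit form computation above. Since this is Wall's classical result, I would ultimately cite \cite{wa} for the verification and content myself, in the write-up, with indicating the decomposition in Step 1 and the identification of the pieces in Step 2.
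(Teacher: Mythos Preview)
The paper does not give its own proof of this lemma: it is stated as a citation of Wall's result \cite[p.~36]{wa}, with no argument beyond the reference. So there is nothing in the paper to compare your proposal against line by line.

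That said, your outline is essentially the strategy Wall uses (and the standard one): reduce via the orthogonal primary decomposition to a single $*$-irreducible $\phi$, compute the $\GL$-centralizer of the $\phi$-primary block as a unipotent-by-Levi group with the stated exponent, and then cut the Levi factors down by the isometry condition to get the $A(\phi^\mu)$'s case by case. Your identification of the main obstacle is accurate --- the parity dichotomy for $\phi = x\pm 1$ and the verification that the unipotent radical is unaffected by the form are exactly the points where Wall's paper does the real work. Since the present paper is content to cite \cite{wa} for all of this, your final sentence (cite Wall for the verification, sketch the decomposition and identification of pieces) matches the paper's treatment precisely, and indeed goes further than the paper does.
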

\begin{lemma}[\cite{wa}, pp. $39$]\label{003}
Let $X\in\On^\e(n,q)$ be a matrix corresponding to the data $\Delta_X=\{(\phi,\mu_\phi)
:\phi\in\Phi_X\subset \Phi\}$. Then the conjugacy class of
$X$ in $\On^\e(n,q)$ is of size $\dfrac{|\Sp|}{\prod\limits_{\phi}B(\phi)}$
where $B(\phi)$ and $A(\phi^\mu)$ are defined as before, except when $\phi(x)=x\pm 1$,
\begin{align*}
A(\phi^\mu)=\begin{cases}
|\On^{\e'}(m_\mu,q)|&\text{if }\mu\text{ odd}\\
q^{-\frac{1}{2}m_{\mu}}|\SP(m_\mu,q)|&\text{if }\mu\text{ even}
\end{cases},
\end{align*}
where $\e'$ in $\On^{\e'}(m_\mu,q)$  gets determined by the 
corresponding sign of the part, of the partition.
\end{lemma}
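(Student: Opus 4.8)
The plan is to compute the centralizer order $|C_{\On^\e(n,q)}(X)|$ directly, so that the conjugacy class size is $|\On^\e(n,q)|/|C_{\On^\e(n,q)}(X)|$, and to observe that the computation runs exactly in parallel with the one behind Lemma~\ref{002} except at the reciprocal–linear factors $\phi(x)=x\pm1$. First I would set up the \emph{primary decomposition} of $V=\fq^n$ with respect to $X$: writing the characteristic polynomial as a product of powers of $*$-irreducible polynomials, $V$ splits as an orthogonal direct sum $V=\bigoplus_{\phi}V_\phi$ of $X$-invariant, pairwise non-degenerate subspaces, one for each $\phi\in\Phi_X$ (the summand attached to a type~$2$ polynomial $\phi=gg^*$ being hyperbolic, assembled from the totally isotropic generalized eigenspace of $g$ and its dual). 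Hence $C_{\On^\e(n,q)}(X)=\prod_{\phi}C_{\On(V_\phi)}\!\big(X|_{V_\phi}\big)$, and it suffices to evaluate each local factor; the product of their orders is precisely $\prod_\phi B(\phi)$.

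For $\phi$ of type~$1$ (irreducible, $\phi=\phi^*$, even degree) or type~$2$, the local analysis is word-for-word that of the symplectic case. On $V_\phi$ the centralizer algebra $\fq[X|_{V_\phi}]$ is the degree-$|\phi|$ extension of $\fq$, of order $Q=q^{|\phi|}$ (resp.\ a product of two such), the ambient bilinear form induces a non-degenerate Hermitian form for type~$1$ and a perfect duality pairing for type~$2$, and the centralizer restricted to the isotypic layer indexed by a part $\mu$ is therefore $\U(m_\mu,Q)$ or $\GL(m_\mu,Q)$; the homomorphisms between distinct layers together with the unipotent radical inside a fixed layer contribute the universal factor $Q^{\sum_{\mu<\nu}\mu m_\mu m_\nu+\frac12\sum_\mu(\mu-1)m_\mu^2}$. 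Since none of $Q$, the multiplicities $m_\mu$, or the relevant combinatorics changes, these contributions to $B(\phi)$ are identical to those in Lemma~\ref{002} and require no recomputation; this is exactly the force of the phrase ``defined as before''.

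The genuinely new content is the case $\phi(x)=x\pm1$. Here $X|_{V_\phi}=\mp(I+N)$ with $N$ nilpotent of Jordan type the underlying partition of $\lambda_\phi^\pm$, and $V_\phi$ now carries a non-degenerate \emph{symmetric} form rather than an alternating one. I would filter $V_\phi$ by the subspaces $\im N^{i}$ and $\ker N^{i}$, and examine the non-degenerate form induced on the $m_\mu$-dimensional graded layer coming from the size-$\mu$ blocks (a suitable subquotient of $\ker N\cap\im N^{\mu-1}$), obtained from the ambient form by transporting across $N^{\mu-1}$. Because $I+N$ is an isometry, on the associated graded of the $N$-filtration $N$ is skew-self-adjoint, so this transport introduces a sign $(-1)^{\mu-1}$: for $\mu$ \emph{odd} the induced form stays symmetric, so the reductive part of the layer-centralizer is an orthogonal group $\On^{\e'}(m_\mu,q)$ whose type $\e'$ is read off from the sign attached to the part $\mu$; for $\mu$ \emph{even} it becomes alternating (and $m_\mu$ is forced even, exactly as in the definition of an orthogonal signed partition), so the reductive part is a symplectic group, while the factor $q^{-\frac12 m_\mu}$, combined with the $q^{\frac12(\mu-1)m_\mu^2}$ term, records the exact dimension of the unipotent radical of the layer-centralizer. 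This mirrors Lemma~\ref{002}, with the roles of $\On$ and $\SP$ interchanged and the sign of the $q$-exponent flipped. Collecting the local orders into $\prod_\phi B(\phi)$ and dividing into $|\On^\e(n,q)|$ then yields the stated class size.

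I expect the main obstacle to be the sign-and-parity bookkeeping on the graded pieces of the Jordan filtration: one must verify carefully that the orthogonal type $\e'$ produced on each odd layer matches the combinatorial sign carried by that part, that the even layers really carry alternating (not symmetric) forms so that $\SP(m_\mu,q)$ is the right factor, and — most delicately — that the precise normalizing power of $q$ attached to the even layers is $q^{-\frac12 m_\mu}$ and not some other exponent. It is at this last point that the asymmetry between Lemma~\ref{002} and Lemma~\ref{003} (the exponent $+\frac12 m_\mu$ there versus $-\frac12 m_\mu$ here) genuinely shows up, and it is the easiest thing to get wrong in an otherwise routine parallel argument.
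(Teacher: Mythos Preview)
Your sketch is a faithful outline of the standard Wall-style computation, but there is nothing in the paper to compare it against: the paper does not prove this lemma at all. It is stated with the attribution ``[\textit{wa}], pp.~39'' and simply quoted as a known result, exactly like Lemma~\ref{002}. So the question of whether your approach matches the paper's is moot.

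That said, two remarks on your write-up. First, you correctly identify the class size as $|\On^\e(n,q)|/|C_{\On^\e(n,q)}(X)|$; the $|\Sp|$ appearing in the numerator of the displayed formula in the paper is evidently a typographical slip carried over from Lemma~\ref{002}, and you should not try to reproduce it. Second, your account of the $x\pm1$ case --- the $(-1)^{\mu-1}$ sign from transporting the form across $N^{\mu-1}$, forcing symmetric forms on odd layers and alternating forms on even layers, with the orthogonal type $\e'$ read from the partition sign --- is exactly the mechanism Wall uses, and your flagged ``main obstacle'' (the $q^{-\frac12 m_\mu}$ versus $q^{+\frac12 m_\mu}$ discrepancy) is indeed the only place where genuine care is needed. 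If you were to flesh this into a full proof you would want to make explicit the non-degenerate pairing on the graded pieces and compute the dimension of the unipotent radical of the centralizer directly, rather than inferring the exponent by analogy; the analogy-with-sign-flip heuristic is correct but is precisely the step where a reader would want to see the actual count.
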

With all the basic tools now in place, we are ready to 
move to the next section, where
we detemine when a matrix $A$ whose characteristic polynomial
is a $*$-irreducible polynomial of type $1$ or $2$, is an $M$-th power. 
This information is further used in subsequent chapters to determine the 
desired generating functions, with the help of the concept of 
central join of two matrices, following \cite{ta1}, \cite{ta2}.

\section{$M$-power and $M^*$-power polynomial}
In \cite{ks}, the concept of $M$-power polynomial has been introduced, which plays a crucial role of
identifying matrices of $\GL(n,q)$ which are $M$-th powers, in terms of the combinatorial data. 
We will be encountering another kind of polynomials, which are irreducible factors of characteristic 
polynomials for identifying matrices in $\SP(2n,q)$, $\On^\epsilon(n,q)$. 
\subsection{Special polynomials}
\begin{lemma}\label{001}
\begin{enumerate}
\itemindent=-13pt
\item Each Self reciprocal irreducible monic (SRIM) 
polynomial of degree $2n~ (n \geq 1)$ over $\fq$ is a factor of the
polynomial
\begin{align} 
H_{q,n}(x)\coloneqq x^{q^n+1}-1\in\fq[x]
,
\end{align}
\item Each irreducible factor of degree  $\geq 2$ of $H_{q,n}(x)$ is a 
SRIM-polynomial of
degree $2d$, where $d$ divides $n$ such that $\frac{n}{d}$ is odd.
\end{enumerate}
\end{lemma}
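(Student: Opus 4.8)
The plan is to argue entirely at the level of roots in a fixed algebraic closure $\overline{\fq}$, using that a monic polynomial with nonzero constant term is $*$-symmetric exactly when its set of roots is stable under $\gamma\mapsto\gamma^{-1}$, together with the fact that $H_{q,n}$ is separable (its derivative is $(q^n+1)x^{q^n}$, which is coprime to $H_{q,n}$ since $\gcd(q^n+1,q)=1$). For part (1), let $f$ be a SRIM polynomial of degree $2n$ and let $\alpha$ be one of its roots, so $\fq(\alpha)=\mathbb{F}_{q^{2n}}$ and the roots of $f$ are $\alpha,\alpha^q,\dots,\alpha^{q^{2n-1}}$, all distinct. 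Since $f=f^*$, the root set is closed under inversion, so $\alpha^{-1}=\alpha^{q^j}$ for some $0\le j\le 2n-1$. Applying the Frobenius $j$ more times to this relation yields $\alpha=\alpha^{q^{2j}}$, hence $\alpha\in\mathbb{F}_{q^{\gcd(2j,2n)}}$; as $\alpha$ has degree $2n$ over $\fq$, this forces $2n\mid 2j$, i.e.\ $j\in\{0,n\}$. The case $j=0$ gives $\alpha^2=1$, impossible for $\deg f=2n\ge 2$, so $j=n$ and $\alpha^{q^n+1}=1$. Every root of $f$ therefore lies in the root set of $H_{q,n}$, and separability of $H_{q,n}$ gives $f\mid H_{q,n}$.

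For part (2), let $g$ be an irreducible factor of $H_{q,n}$ with $e\coloneqq\deg g\ge 2$, and fix a root $\beta$, so $\fq(\beta)=\mathbb{F}_{q^e}$ and $\beta^{q^n}=\beta^{-1}$. Then $\beta^{-1}=\beta^{q^n}$ is a Frobenius conjugate of $\beta$, hence again a root of $g$; since the dual of a monic irreducible polynomial with nonzero constant term is again monic, irreducible, and of the same degree, $g^*$ shares the root $\beta^{-1}$ with $g$, forcing $g^*=g$. Thus $g$ is SRIM, its roots break into genuine pairs $\{\gamma,\gamma^{-1}\}$ with $\gamma\ne\pm 1$ (as $e\ge 2$), and so $e=2d$ is even; moreover $\beta^{q^{2n}}=\beta$ gives $e\mid 2n$, hence $d\mid n$. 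Running the argument of part (1) on $g$ shows $\beta^{q^d+1}=1$ as well, so the multiplicative order of $\beta$ divides $\gcd(q^n+1,q^d+1)$. Writing $n=dm$ and using $q^d\equiv -1\pmod{q^d+1}$, this gcd equals $q^d+1$ if $m$ is odd and divides $2$ if $m$ is even; the latter would give $\beta^2=1$, contradicting $e\ge 2$. Hence $m=n/d$ is odd, as claimed.

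All the ingredients are standard; the only step needing care is the evaluation of $\gcd(q^n+1,q^d+1)$ and, just before it, the observation that the Frobenius power realizing inversion of a root of a SRIM polynomial is exactly half its degree. This is precisely what upgrades the easy divisibility $e\mid 2n$ to the sharp conclusion that $n/d$ is odd; the remainder is routine bookkeeping with Galois conjugates and separability, and I expect no serious obstacle.
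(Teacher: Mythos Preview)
Your proof is correct. The paper states this lemma without proof, treating it as a known fact (it is a classical result, due essentially to Carlitz and later Meyn); there is therefore nothing in the paper to compare against. Your argument via the Frobenius orbit and the identification of the exponent $j$ with $\alpha^{-1}=\alpha^{q^j}$ is the standard one, and your handling of part~(2) is clean: the key reduction to $\gcd(q^n+1,q^d+1)$ and its evaluation modulo $q^d+1$ is exactly what is needed to extract the oddness of $n/d$. One very minor remark: when you write $\alpha\in\mathbb{F}_{q^{\gcd(2j,2n)}}$, what you actually use (and what is correct) is that $\alpha^{q^{2j}}=\alpha$ together with $[\fq(\alpha):\fq]=2n$ forces $2n\mid 2j$; the $\gcd$ phrasing is fine but slightly roundabout, since the direct divisibility is all you need.
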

\begin{example}
\begin{enumerate}
\itemindent=-13pt
\item The SRIM polynomials of degree $4$ over $\fqn_5$, are factors of 
$x^{26}-1\in\fqn_5[x]$. Using \cite{sage}, it can be found out that in $\fqn_5[x]$, we have 
$x^{26}-1=(x + 1)
 (x + 4)
 (x^4 + x^3 + 4x^2 + x + 1)
 (x^4 + 2x^3 + 2x + 1)
 (x^4 + 2x^3 + x^2 + 2x + 1)
 (x^4 + 3x^3 + 3x + 1)
 (x^4 + 3x^3 + x^2 + 3x + 1)
 (x^4 + 4x^3 + 4x^2 + 4x + 1).$ This gives all the SRIM polynomial of degree $4$ over $\fqn_5$.
\item Also using \cite{sage}, we have that in $\fqn_2[x]$ the polynomial 
$x^{2^6+1}-1$ factorizes as $(x + 1)
 (x^4 + x^3 + x^2 + x + 1)
 (x^{12} + x^8 + x^7 + x^6 + x^5 + x^4 + 1)
 (x^{12} + x^{10} + x^7 + x^6 + x^5 + x^2 + 1)
 (x^{12} + x^{10} + x^9 + x^8 + x^6 + x^4 + x^3 + x^2 + 1)
 (x^{12} + x^{11} + x^9 + x^7 + x^6 + x^5 + x^3 + x + 1)
 (x^{12} + x^{11} + x^{10} + x^9 + x^8 + x^7 + x^6 + x^5 + x^4 + x^3 + x^2 + x + 1).$ This gives all the SRIM polynomial of degree $12$ over $\fqn_2$.
\end{enumerate}
\end{example}
\begin{define}
A SRIM polynomial $f\in\fq[x]$ of degree $2k$, $k\geq 1$, is said to 
be an \textbf{$M^*$-power SRIM polynomial} if and only if $f(x^M)$ 
has a SRIM factor $g\in\fq[x]$, of degree $2k$. Denote the set of 
$M^*$-power SRIM polynomial (of degree $\geq 2$) by $\Phi_M^*$.
\end{define}
\begin{define}\cite{ks}
A monic irreducible polynomial $f\in\fq[x]$ of degree $k$, $k\geq 1$, is said to 
be an \textbf{$M$-power polynomial} if and only if $f(x^M)$ 
has a monic irreducible factor $g\in\fq[x]$, of degree $k$.  Denote the set of 
$M$-power polynomial ($\neq x$) by $\Phi_M$.
\end{define}
\begin{example}\label{2-but-not-2*}
(a) Consider $\fqn_5$ and the polynomial $x^4 + 3x^3 + 3x + 1
\in\fqn_5[x]$. Then $x^8 + 3x^6  + 3x^2 + 1= (x^4 + 2x^3 + x^2 + 2x + 1)
(x^4 + 3x^3 + x^2 + 3x + 1)$. Hence $x^4 + 3x^3 + 3x + 1$ is a $2^*$-power SRIM polynomial.

\noindent(b) Consider $\fqn_5$ and the polynomial $x^4 + 3x^3 + 1x^2 + 3x + 1
\in\fqn_5[x]$. Then $x^8 + 3x^6 + x^4 + 3x^2 + 1=(x^4 + 2x^3 + x^2 + 3x + 1)(x^4 + 3x^3 + x^2 + 2x + 1)$. Thus it is a $2$-power polynomial but not a $2^*$-power SRIM polynomial.
\end{example}
\begin{proposition}\label{007}
Let $N^*_M(q,2k)$ denotes the number of $M^*$-power SRIM polynomial 
of degree $2k$, $k\geq 1$. Then \begin{equation}
N^*_M(q,2k)=\dfrac{1}{2k(M,q^{2k}-1)}\displaystyle
\sum\limits_{\substack{l=\textup{odd}\\l|2k}}\mu(l)(M(q^{2k/l}-1),q^k+1).
\end{equation}
\end{proposition}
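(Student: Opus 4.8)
The plan is to count $M^*$-power SRIM polynomials of degree $2k$ by first reformulating the defining condition in terms of roots, then using a Möbius inversion argument over the divisors of $2k$ that contribute SRIM factors, exactly parallel to the count of $M$-power polynomials in \cite{ks} but with the SRIM constraint handled via Lemma \ref{001}. First I would recall that a SRIM polynomial $f$ of degree $2k$ has its roots forming a single Galois orbit of size $2k$, and each root $\zeta$ satisfies $\zeta^{q^k+1}=1$ (by Lemma \ref{001}(1)), so $\zeta$ lies in the cyclic group $\mu_{q^k+1}$ of $(q^k+1)$-th roots of unity inside $\fqn_{q^{2k}}^\times$; moreover the roots of a SRIM polynomial of degree exactly $2k$ are precisely the elements of $\mu_{q^k+1}$ whose $\fq$-orbit has full size $2k$ (i.e.\ those not lying in $\mu_{q^d+1}$ for a proper $d\mid k$ with $k/d$ odd, using Lemma \ref{001}(2)). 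The condition that $f(x^M)$ has a SRIM factor $g$ of degree $2k$ then translates: if $\zeta$ is a root of $f$, then $f(x^M)$ factors over the $M$-th roots of $\zeta$, and $g$ being SRIM of degree $2k$ is equivalent to requiring that some $M$-th root $\eta$ of $\zeta$ again lies in $\mu_{q^k+1}$ with full orbit size $2k$.

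Next I would make the root-counting explicit. Fix a generator of the cyclic group $\mu_{q^{2k}-1}=\fqn_{q^{2k}}^\times$ and write elements additively via discrete logarithm. The subgroup $\mu_{q^k+1}$ has order $q^k+1$. Given $\zeta\in\mu_{q^k+1}$, an $M$-th root $\eta$ of $\zeta$ lying in $\mu_{q^k+1}$ exists iff $\zeta$ lies in the image of the $M$-power map on $\mu_{q^k+1}$, and the number of $\zeta\in\mu_{q^k+1}$ admitting such a root is $(q^k+1)/(M,q^k+1)$ — but one must be careful: $\eta$ need not lie in $\mu_{q^k+1}$; it only needs to generate, together with its conjugates, a SRIM polynomial, i.e.\ $\eta^{q^k+1}=1$. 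So the right count is: the number of $\zeta$ that are roots of a degree-$2k$ SRIM polynomial and that have an $M$-th root $\eta$ with $\eta^{q^k+1}=1$. Writing $\eta$ in $\fqn_{q^{2k}}^\times$, $\eta^{q^k+1}=1$ and $\eta^M=\zeta$ forces $\zeta^{?}$-type divisibility conditions; the clean way is to note $\eta\in\mu_{q^k+1}$ is not required, rather $\eta$ ranges over a possibly larger cyclic group, and the number of admissible $\zeta$ in the full group $\mu_{q^{2k}-1}$ with the SRIM-degree-$2k$ property works out, after counting $M$-th roots, to involve the gcd $(M(q^{2k/l}-1),q^k+1)$ appearing in the statement. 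Concretely I would: (i) count all $\zeta$ of exact "SRIM-type $2k$" using inclusion–exclusion over odd $l\mid 2k$ with the Möbius function $\mu(l)$, the inner term before imposing the $M$-power condition being $(q^{2k/l}-1)$-related; (ii) intersect with the image of $x\mapsto x^M$, which divides counts by the relevant gcd and replaces $q^{2k/l}-1$ by $(M(q^{2k/l}-1),q^k+1)$; (iii) divide by $2k$ to pass from roots to polynomials and by $(M,q^{2k}-1)$ to account for the multiplicity with which a given SRIM factor $g$ of $f(x^M)$ is produced.

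The main obstacle I anticipate is getting the arithmetic of the gcds exactly right — in particular, pinning down why the relevant modulus is $M(q^{2k/l}-1)$ rather than $M$ alone, and why the normalizing factor is $2k(M,q^{2k}-1)$ rather than, say, $2k(M,q^k+1)$. This requires carefully tracking how the $M$-power map on $\fqn_{q^{2k}}^\times$ interacts with the two nested subgroups $\mu_{q^k+1}\subset\mu_{q^{2k}-1}$ and with the Frobenius $x\mapsto x^{q^k}$ (which acts as inversion on $\mu_{q^k+1}$), so that a root $\eta$ of $g$ and its conjugate $\eta^{q^k}=\eta^{-1}$ are correctly counted once per SRIM polynomial. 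The cleanest route is to reduce the degree-$2k$ SRIM count to a count over $\mu_{q^k+1}$ directly via Lemma \ref{001}: a SRIM polynomial of degree $2k$ corresponds to a $\mathbb{Z}/2k$-orbit (equivalently a $\langle q\rangle$-orbit) of full size in $\mu_{q^k+1}\setminus\bigcup_{d}\mu_{q^d+1}$, Möbius-invert that as in part (2) of Lemma \ref{001}, then impose "$\zeta$ has an $M$-th root that is again such a SRIM root" and count $M$-th roots using that $\gcd(M,q^k+1)$-to-$1$ structure, with the factor $(M,q^{2k}-1)$ entering because $f(x^M)$ has degree $2kM$ and its degree-$2k$ SRIM factors come in bunches governed by $(M,q^{2k}-1)$. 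Once the orbit-counting bookkeeping is set up correctly, the formula follows by assembling (i)–(iii); the example computations in Example \ref{2-but-not-2*} provide a useful sanity check on the normalization.
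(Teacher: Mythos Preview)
Your approach is essentially the same as the paper's: translate the $M^*$-power SRIM condition into a statement about roots via Lemma~\ref{001}, pass from counting polynomials $f$ to counting elements $\beta$ in $\mu_{q^k+1}\subset\fqn_{q^{2k}}^\times$ whose $M$-th power $\beta^M$ generates $\fqn_{q^{2k}}$ over $\fq$, carry out a M\"obius inversion over the odd divisors $l\mid 2k$ (these being the only divisors for which $\beta^M$ can land in the subfield $\fqn_{q^{2k/l}}$, by the $x^m+1\mid x^n+1$ criterion), and then normalise by $2k(M,q^{2k}-1)$. The paper executes exactly these steps in this order; the obstacle you flag about the factor $(M,q^{2k}-1)$ versus $(M,q^k+1)$ is dispatched in the paper simply by invoking the fiber size $|\theta_M^{-1}(1)|=(M,q^{2k}-1)$ of the $M$-power map on all of $\fqn_{q^{2k}}^\times$, so your instinct to pass through the ambient multiplicative group rather than staying inside $\mu_{q^k+1}$ matches the paper's argument.
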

\begin{proof}
    Let $f$ be an $M^*$-power SRIM polynomial of degree $2k$. Then 
$f(x^M)$ has a SRIM factor $g$ of degree $2k$. 
Consider $f,g\in\fqn_{q^{2k}}$.
Then $f=\displaystyle\prod_{i=1}^{2k}(x-\alpha_i),
g=\displaystyle\prod_{i=1}^{2k}(x-\beta_i)$. As discussed before, 
without loss of generality we may assume that $\beta_i^M=\alpha_i$. 
Considering the map $\theta_M:\fqn_{q^{2k}}\rightarrow\fqn_{q^{2k}}$, we have $\alpha_i\in\im(\theta_M)$, for 
all $i$. Since 
$f$ is SRIM, using Lemma \ref{001} we have that 
$\alpha_i^{q^k+1}=1$ for all $i$. Thus $\beta_i$ for all $i$, satisfies 
$\beta_i^{M(q^{2k}-1)}=\beta_i^{q^k+1}=1$ and $\beta_i^M=\alpha_i$ 
generates $\fqn_{q^{2k}}$ over $\fq$.

Conversely suppose $\alpha$ satisfies that,
$\alpha^{q^k+1}=1$ and 
generates $\fqn_{q^{2k}}$ over $\fq$. If $\varphi$ is the monic minimal polynomial of $\alpha$, then $\varphi$ is of degree $2k$. Also if 
$\eta$ is any root of $\varphi$, then $\eta=\alpha^{q^l}$, for some $l$,
whence $\eta^{q^k+1}=1$. Thus $\varphi$ is SRIM. So, if
 $N^*_M(q,2k)$ denotes the number of $M^*$-power SRIM polynomial of degree $2k$, then
\begin{equation}
N^*_M(q,2k)=\dfrac{1}{2k}\displaystyle|\{\alpha\in\fqn_{q^{2k}}:
\alpha^{q^k+1}=1,\alpha=\theta_M(\eta) \text{ for some }\eta\in
\fqn_{q^{2k}},\fqn_{q^{2k}}=\fq(\alpha)\}|,
\end{equation} as sets of roots, of distinct irreducible polynomials, are disjoint.
Since $|\theta_M^{-1}(1)|=(M,q^{2k}-1)$, we have that,
\begin{equation*}
N^*_M(q,2k)=\dfrac{1}{2k(M,q^{2k}-1)}\displaystyle|\{\alpha\in\fqn_{q^{2k}}:
\alpha^{q^k+1}=1,\fqn_{q^{2k}}=\fq(\alpha^M)\}|.
\end{equation*}
To ensure $\fqn_{q^{2k}}=\fq(\alpha^M)$, we should have that $
\alpha^M\notin\fqn_{q^l}$ for any $l|2k,l>1$. Since $\alpha^{q^k+1}
=1$,
 we have that $\alpha^{q^{\frac{k}{l}}+1}$ if and only if $l$ is odd
(because $x^m+1$ divides $x^n+1$ if and only if $\frac{n}{m}$ is 
odd). 
Thus $\alpha^M\in\fqn_{q^{2k/l}}$ if and only if $l$ is odd.
For $l$ odd, define 
$E_l=\{\alpha\in\fqn_{q^{2k}}:
\alpha^{q^k+1}=1,\fqn_{q^{2k/l}}=\fq(\alpha^M)\}$. Then 
$|E_l|=(M(q^{2k/l}-1),q^k+1)$, whence by inclusion-exclusion principle the proof is done.

\end{proof}
This settles down the case, when a single block is an $M$-power. Now let us proceed for the case when there are more than one block of same type.
\begin{example} We can show that, if
$A$ is a matrix corresponding to the conjugacy class data 
$(x^{12}+2x^{11}+2x^{10}+2x^9+x^8+x^6+x^4+2x^3+2x^2+2x+1,1)$ in $\SP(12,\fqn_3)$, then $A^{73}$ has conjugacy class
data $(x^4 + x^3 + x^2 + x + 1, 1^3)$. 
\end{example}
Now we consider the case when $A$ has more than one block of 
type $1$ but is an $M$-th power of some $\alpha$. 
Since we are interested in the image of the map $x\mapsto x^M$,
we will be considering the case when any $M$-th root of $A$ 
(if exists) has 
single Jordan block of type $1$. Thus if 
minimal polynomial of $A$ (of degree $\frac{2n}{k}$ for some odd $k$), has
 root $\gamma$, we must have that $M$-th root of
$\gamma$ must exist in $\fqn_{q^{2n}}$ and not in any proper 
subfield of $\fqn_{q^{2n}}$. Thus we want to calculate 
the number of SRIM polynomials of degree $\frac{2n}{k}$, over $\mathbb{F}_q$ such that 
if $f(\alpha)=0$ for some $\alpha\in\mathbb{F}_{q^{\frac{2n}{k}}}$, then there exists
$\beta\in\mathbb{F}_{q^{2n}}$ such that 
$\textup{min}_{\mathbb{F}_q}(\beta)$ is SRIM polynomial of order 
$2n$. 
Let $N^*_M(q,2n,\frac{2n}{k})$ denotes the number of SRIM polynomial of degree $\frac{2n}{k}$ such that if $f(\alpha)=0$ for some 
$\alpha\in\fqn_{q^{\frac{2n}{k}}}$ then any $M$-th root of $\alpha$, 
 say $\beta$ lies in $\fqn_{q^{2n}}$ with the property that 
$\fqn_{q^{2n}}=\fq[\beta]$ and $\beta^{q^n+1}=1$.
\begin{proposition}\label{008}
 We have $N^*_M(q,2n,\frac{2n}{k})$ to be equal to
 \begin{align}
\dfrac{1}{2k}\sum\limits_{\substack{s < k\\ s =\text{odd},s|k}}\mu(s)
\dfrac{1}{(M,q^{\frac{2n}{s}}-1)}\left(
\sum\limits_{\substack{l = \text{odd}\\l|\frac{2n}{k}}}
\mu\left(l\right)\left(M\left(q^\frac{n}{kls}
+1\right),q^{\frac{n}{s}}+1\right)\right).
\end{align}
\end{proposition}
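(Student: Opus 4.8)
The plan is to run the argument of Proposition~\ref{007} one level deeper, the new feature being that the polynomial $f$ to be counted has degree $\frac{2n}{k}$ while any $M$-th root of a root of $f$ is required to generate the larger field $\fqn_{q^{2n}}$. Since $k$ is odd and $\frac{2n}{k}$ is an integer, write $d=\frac{n}{k}$, so $\deg f=2d$; by Lemma~\ref{001}, if $f$ is a SRIM polynomial of degree $2d$ then every root $\alpha$ satisfies $\fq(\alpha)=\fqn_{q^{2d}}$ and $\alpha^{q^d+1}=1$. First I would rephrase $N^*_M(q,2n,\frac{2n}{k})$, up to the usual division by the Frobenius-orbit size $\deg f$, as the number of roots $\alpha\in\fqn_{q^{2d}}$ that are \emph{admissible}: $\alpha^{q^d+1}=1$, $\fq(\alpha)=\fqn_{q^{2d}}$, and there is a $\beta$ with $\beta^M=\alpha$, $\beta^{q^n+1}=1$ and $\fq(\beta)=\fqn_{q^{2n}}$. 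Here the passage between SRIM polynomials and Frobenius orbits of admissible roots is the ``sets of roots of distinct irreducibles are disjoint'' observation from Proposition~\ref{007}, and only the existence of such a $\beta$ is used (the ``any $M$-th root'' wording in the statement of what is being counted plays no role, just as the proof of Proposition~\ref{007} works with the existence of one SRIM factor of $f(x^M)$).

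Next I would replace the admissible $\alpha$'s by the $\beta$'s lying over them inside $\fqn_{q^{2n}}$. Since $(M,q)=1$, the restriction of $\theta_M$ to each subfield $\fqn_{q^{2n/s}}$ has all nonempty fibres of the same size $(M,q^{2n/s}-1)$ --- the analogue of the identity $|\theta_M^{-1}(1)|=(M,q^{2k}-1)$ used in Proposition~\ref{007} --- so the statement ``$\alpha$ is the $M$-th power of such a $\beta$'' converts into plain field-of-definition conditions, and one is reduced to counting $\beta\in\fqn_{q^{2n}}$ with $\beta^{q^n+1}=1$, $\fq(\beta)=\fqn_{q^{2n}}$, and $\fq(\beta^M)=\fqn_{q^{2n/k}}$. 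This count carries two independent maximality conditions to be resolved by M\"obius inversion: that $\beta$ generate $\fqn_{q^{2n}}$ and no proper subfield of it above $\fqn_{q^{2n/k}}$, and that $\beta^M$ generate $\fqn_{q^{2n/k}}$ and no proper subfield of it. The first is handled by the inner sum over odd $l\mid\frac{2n}{k}$, the second by the outer sum over odd $s\mid k$. The key point making \emph{only} odd divisors appear is the divisibility behaviour already used in Proposition~\ref{007} and Lemma~\ref{001}: an element $\gamma$ with $\gamma^{q^{m}+1}=1$ lies in $\fqn_{q^{m'}}$ precisely when $q^{m}+1$ and $q^{m'}-1$ share the factor coming from $q^a+1\mid q^b+1\iff b/a$ odd, so subfields of even index over the relevant base contribute nothing.

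The base cardinalities that feed the two inclusion--exclusions are then elementary cyclic-group counts in groups of roots of unity: the number of $\beta$ in a fixed subfield with $\beta^{q^n+1}=1$ and $\beta^M$ in a fixed smaller subfield is the order of an explicit subgroup of some $\mu_{q^a+1}$, which after the routine manipulations with $q^a-1\mid q^b-1\iff a\mid b$ and $q^a+1\mid q^b+1\iff b/a$ odd comes out as a gcd of the shape $\bigl(M(q^{n/(kls)}+1),\,q^{n/s}+1\bigr)$ appearing in the statement. Substituting these into the nested M\"obius sums and gathering the normalising constants --- the inverse Frobenius-orbit size and the inverse $\theta_M$-fibre sizes $(M,q^{2n/s}-1)^{-1}$, exactly as in Proposition~\ref{007} --- produces the asserted formula.

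I expect the principal difficulty to be the correct interlocking of the two M\"obius inversions: one must check that the conditions $\fq(\beta)=\fqn_{q^{2n}}$ and $\fq(\beta^M)=\fqn_{q^{2n/k}}$ are ``orthogonal'', i.e.\ that the tower $\fq\subseteq\fq(\beta^M)\subseteq\fq(\beta)$ decouples into an $s$-part inside $\fqn_{q^{2n/k}}/\fq$ and an $l$-part inside $\fqn_{q^{2n}}/\fqn_{q^{2n/k}}$, with only odd steps surviving, so that the two inclusion--exclusions can be performed independently and the base gcd's inserted without cross terms or over-counting. A secondary, purely computational, nuisance is the $q$ even versus $q$ odd split, which enters only through $\gcd(q^a-1,q^b+1)\in\{1,2\}$ when the base gcd's are evaluated and costs a line of extra care rather than any new idea.
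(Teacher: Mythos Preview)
Your plan is essentially the paper's own argument: pass from $\alpha$'s to $\beta$'s via the fibre size $(M,q^{2n/s}-1)$ of $\theta_M$, then run two nested M\"obius inversions over odd divisors, with the base counts being the gcd's $\bigl(M(q^{n/(kls)}+1),q^{n/s}+1\bigr)$, and finally divide by the Frobenius-orbit size. The paper defines exactly your set of admissible $\alpha$'s (calling it $E_{2n,\frac{2n}{k}}$), computes its cardinality by the inner inclusion--exclusion, and then strips off the $M$-th roots lying in proper intermediate subfields by the outer one.

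One point to correct: you have the two inversions assigned to the wrong maximality conditions. Divisors $l\mid\frac{2n}{k}$ index the subfields $\fqn_{q^{2n/(kl)}}\subseteq\fqn_{q^{2n/k}}$, so the inner $l$-sum is what forces $\fq(\beta^M)=\fqn_{q^{2n/k}}$; divisors $s\mid k$ index the intermediate fields $\fqn_{q^{2n/k}}\subseteq\fqn_{q^{2n/s}}\subseteq\fqn_{q^{2n}}$, so the outer $s$-sum is what forces $\fq(\beta)=\fqn_{q^{2n}}$. This is also why the normalising factor $(M,q^{2n/s}-1)^{-1}$ sits with the $s$-sum and not the $l$-sum: it is the fibre size of $\theta_M$ on the field $\fqn_{q^{2n/s}}$ in which $\beta$ is being placed. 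With that swap, your outline matches the paper's proof line for line.
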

\begin{proof}
For $k$ odd and $k|2n$, consider the set 
\begin{equation*}
E_{2n,\frac{2n}{k}}=\left\{\alpha\in\fqn_{\frac{2n}{k}}|\alpha^{\frac{n}{k}+1}
=1,\alpha=\beta^M,\beta\in\fqn_{q^{2n}},
\beta^{q^n+1}=1,
[\fq(\alpha):\fq]=\frac{2n}{k}\right\}.
\end{equation*} 
To enumerate this set let us find the number of $\beta\in\fqn_{q^{2n}}$, 
such that $\beta^M\in E_{2n,\frac{2n}{k}}$. Then $\beta $ satisfies the 
equations $\beta^{q^n+1}=1$, $\beta^{M(\frac{n}{k}+1)}=1$. 
Number of $\beta$ satisfiying these two equations is given by $(M(q^\frac{n}{k}+1),q^n+1)$.
But we should have that $[\fq(\beta^M):\fq]=\frac{2n}{k}$. Hence 
$\beta^M\not\in\fqn_{q^\frac{2n}{kl}}$, $l>1$ being odd . Hence
by inclusion-exclusion principle, the number 
of $\beta\in\fqn_{q^{2n}}$, such that $\beta^M\in E_{2n,\frac{2n}{k}}$ is
$
\sum\limits_{\substack{l = \text{odd}\\l|\frac{2n}{k}}}
\mu\left(l\right)\left(M\left(q^\frac{n}{kl}
+1\right),q^n+1\right).
$
Since $|\theta_M^{-1}(1)|=(M,q^{2n}-1)$ 
where $\theta_M:\fqn_{q^{2n}}\rightarrow\fqn_{q^{2n}}$ is the map 
$\theta_M(x)=x^M$, we have that
\begin{equation*}
|E_{2n,\frac{2n}{k}}|=\dfrac{1}{(M,q^{2n}-1)}
\sum\limits_{\substack{l = \text{odd}\\l|\frac{2n}{k}}}
\mu\left(l\right)\left(M\left(q^\frac{n}{kl}
+1\right),q^n+1\right).
\end{equation*}

Now we want to consider only those $\alpha\in E_{2n,\frac{2n}{k}}$ such that 
it doesn't have any $M$-th root in any proper subfield of 
$\fqn_{q^{2n}}$. Since an $M$-th root, say $\beta$, also has minimal polynomial to be 
SRIM (by hypothesis), we have that $\beta\in\fqn_{q^{\frac{2n}{s}}}$ 
if and only if $s$ is odd.
Hence $\beta\in E_{2n,\frac{2n}{k}}\setminus\bigcup
\limits_{\substack{s < k\\ s =\text{odd},\frac{2n}{k}|\frac{2n}{s}}}E_{2n/s,\frac{2n}{k}}$.
Thus we have that
\begin{equation*}
N^*_M\left(q,2n,\frac{2n}{k}\right)= \dfrac{1}{2k}\sum\limits_{\substack{s < k\\ s =\text{odd},\frac{2n}{k}|\frac{2n}{s}}}\mu(s)
\dfrac{1}{(M,q^{\frac{2n}{s}}-1)}
\sum\limits_{\substack{l = \text{odd}\\l|\frac{2n}{k}}}
\mu\left(l\right)\left(M\left(q^{\frac{n}{kls}}
+1\right),q^{\frac{n}{s}}+1\right),
\end{equation*}
since the sets of roots of irreducible polynomials are disjoint.
\end{proof}
\begin{define}
For a divisor $k$ of $n$, we will call a polynomial $f(x)\in\fq[x]$ 
of degree 
$\frac{n}{k}$ which is not an $M$-power polynomial, to be \textbf{degenerate $(M,n,\frac{n}{k})$ polynomial} 
if and only if minimal polynomial of $\beta$ over $\fq$ is of degree $n$,
where $f(\beta^M)=0$.
 Denote the set of
degenerate $(M,n,\frac{n}{k})$ polynomials ($\neq x$) by $\Phi^u_{M,n,\frac{n}{k}}$. 
Denote by $\Phi^{*,u}_{M,n,\frac{n}{k}}$ the subset of
SRIM polynomials having same property.
\end{define}
\begin{remark}
The quantity $N^*_M(q,2n,\frac{2n}{k})$ counts the number of 
degenerate $(M,2n,\frac{2n}{k})$ SRIM polynomials over $\fq$.
\end{remark}
\begin{remark}
We have that $N_M^*(q,2r)=N_M^*(q,2r,2r)$.
\end{remark}
In case a polynomial is degenerate $(M,n,\frac{n}{k})$ polynomial,
there are $M$-th roots of $\alpha$, where $f(\alpha)=0$, which lies in 
$\fqn_{q^{n}}$ and not in any proper subfield of it. But there might be 
other $M$-th roots which lie in other extensions, as illustarted by the following 
examples.
\begin{example} 
Using \cite{sage}, we have that 
$x^{132} + 2x^{77} + x^{66} + 2x^{55} + 1=(x^{12} + x^{11} + x^{10} + x^9 + 2x^6 + x^3 + x^2 + x + 1)
 (x^{60} + x^{58} + 2x^{57} + 2x^{56} + 2x^{55} + 2x^{54} + 2x^{53} + x^{51} + x^{49} + x^{48} + 2x^{46} + x^{45} + x^{44} + 2x^{43} + 2x^{42} + x^{41} + x^{40} + x^{39} + x^{38} + 2x^{36} + x^{34} + x^{32} + 2x^{31} + x^{30} + x^{27} + x^{26} + 2x^{25} + x^{23} + 2x^{21} + 2x^{19} + x^{17} + x^{16} + x^{15} + x^{13} + 2x^{11} + 2x^7 + 2x^5 + 2x^4 + 2x^3 + 2x^2 + 2x + 1)
 (x^{60} + 2x^{59} + 2x^{58} + 2x^{57} + 2x^{56} + 2x^{55} + 2x^{53} + 2x^{49} + x^{47} + x^{45} + x^{44} + x^{43} + 2x^{41} + 2x^{39} + x^{37} + 2x^{35} + x^{34} + x^{33} + x^{30} + 2x^{29} + x^{28} + x^{26} + 2x^{24} + x^{22} + x^{21} + x^{20} + x^{19} + 2x^{18} + 2x^{17} + x^{16} + x^{15} + 2x^{14} + x^{12} + x^{11} + x^9 + 2x^7 + 2x^6 + 2x^5 + 2x^4 + 2x^3 + x^2 + 1)$ in $\fqn_3[x]$.
Hence note that a root of $x^{12}+2x^{11}+x^{6}+2x^5+1$ has $11$-th root in different degree field extensions.
\end{example}
Now assume that $f(\beta^M)=0$ for some $\beta\in\fqn_{q^k}$, where 
$f\in\fq[x]$ is a SRIM polynomial of degree $2n$. Then minimal polynomial
of $\beta$ must divide $f(x^M)$. Hence to determine all possible $k$,
we should know about the irreducible factors of $f(x^M)$.
From \cite{bu} we know that the irreducible factors of $f(x^M)$ solely
depends on  the degree and the exponent of the irreducible
 polynomial, which is defined to be the
multiplicative order of a root of $f$ is the spliting field of $f$. Since all the 
roots are conjugate to each other, we have that the exponent is unique
data attached to the polynomial $f$. This necessiates to find the number 
of irreducible polynomial which has 
exponent $e$. We have the following 
\begin{lemma}\label{033}
Let $N_M^{*,e}(q,2n)$ denotes the number of SRIM polynomials of
degree $2n$ and exponent $e$ which are not $M^*$-power SRIM polynomial. Then we have 
\begin{align*}
N_M^{*,e}(q,2n)=
\dfrac{1}{2n}\displaystyle
\sum\limits_{\substack{l=\textup{odd}\\l|2n}}\mu(l)\phi(e)
-
\dfrac{1}{2n(M,q^{2n}-1)}\displaystyle
\sum\limits_{\substack{l=\textup{odd}\\l|2n}}\mu(l)(M(q^{2n/l}-1),e).
\end{align*}
\end{lemma}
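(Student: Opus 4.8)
The plan is to write $N_M^{*,e}(q,2n)$ as the difference of two quantities: the total number $P(e)$ of SRIM polynomials of degree $2n$ with exponent $e$, and the number $P_M(e)$ of those that are $M^*$-power SRIM polynomials. Each is computed by translating the condition into the cyclic group $\fqn_{q^{2n}}^\times$ and applying a Möbius inversion over subfields, exactly in the spirit of the proof of Proposition \ref{007}.

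For $P(e)$: by Lemma \ref{001}(1) a SRIM polynomial of degree $2n$ divides $x^{q^n+1}-1$, so each of its roots $\alpha$ satisfies $\alpha^{q^n+1}=1$; conversely, if $\alpha\in\fqn_{q^{2n}}$ satisfies $\alpha^{q^n+1}=1$ and $\fq(\alpha)=\fqn_{q^{2n}}$, then $\alpha^{q^n}=\alpha^{-1}$, so $\min_{\fq}(\alpha)$ is self-reciprocal, irreducible, of degree $2n$. Since distinct irreducible polynomials have disjoint root sets,
\[
P(e)=\frac{1}{2n}\,\bigl|\{\alpha\in\fqn_{q^{2n}} : \text{ord}(\alpha)=e,\ \alpha^{q^n+1}=1,\ \fq(\alpha)=\fqn_{q^{2n}}\}\bigr|.
\]
Every such $\alpha$ is a $(q^n+1)$-th root of unity, and such a root lies in the subfield $\fqn_{q^{2n/l}}$ precisely when $l$ is odd (because $x^m+1\mid x^{m'}+1$ iff $m'/m$ is odd, as used in the proof of Proposition \ref{007}). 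Hence inclusion--exclusion over the proper subfields gives
\[
2n\,P(e)=\sum_{\substack{l\ \text{odd}\\ l\mid 2n}}\mu(l)\,\bigl|\{\alpha\in\fqn_{q^{2n/l}}:\text{ord}(\alpha)=e\}\bigr|=\sum_{\substack{l\ \text{odd}\\ l\mid 2n}}\mu(l)\,\phi(e),
\]
the inner cardinality being read as $\phi(e)$ when $e\mid q^{2n/l}-1$ and $0$ otherwise; this is the first summand of the claimed formula.

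For $P_M(e)$: by the definition of $M^*$-power SRIM polynomial and the computation in Proposition \ref{007}, such a polynomial of degree $2n$ and exponent $e$ is one whose root $\alpha$ is of the form $\eta^M$ with $\eta^{q^n+1}=1$ (and then $\fq(\eta)=\fqn_{q^{2n}}$ automatically). Pulling back along $\theta_M(x)=x^M$ on $\fqn_{q^{2n}}^\times$, whose nonempty fibres all have size $(M,q^{2n}-1)$, we obtain
\[
P_M(e)=\frac{1}{2n(M,q^{2n}-1)}\,\bigl|\{\eta\in\fqn_{q^{2n}} : \text{ord}(\eta^M)=e,\ (\eta^M)^{q^n+1}=1,\ \fq(\eta^M)=\fqn_{q^{2n}}\}\bigr|.
\]
Since $\eta^M$ again lies among the $(q^n+1)$-th roots of unity, the same odd-divisor dichotomy applies; inclusion--exclusion over the subfields $\fqn_{q^{2n/l}}$ that can contain $\eta^M$ replaces the condition $\fq(\eta^M)=\fqn_{q^{2n}}$ by, for each odd $l\mid 2n$, the condition $\eta^M\in\fqn_{q^{2n/l}}$, i.e. $\eta^{M(q^{2n/l}-1)}=1$, which together with the order-$e$ constraint leaves $(M(q^{2n/l}-1),e)$ choices for $\eta$. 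Thus $2n(M,q^{2n}-1)\,P_M(e)=\sum_{l\ \text{odd},\,l\mid 2n}\mu(l)(M(q^{2n/l}-1),e)$, and $N_M^{*,e}(q,2n)=P(e)-P_M(e)$ is the stated expression.

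The two Möbius inversions are routine; the one delicate point is the last count in the computation of $P_M(e)$ — checking that, after imposing $\eta^M\in\fqn_{q^{2n/l}}$, the order-$e$ requirement on $\eta^M$ combines with $\eta^{M(q^{2n/l}-1)}=1$ to leave exactly $(M(q^{2n/l}-1),e)$ admissible $\eta$, and that only odd $l$ contribute (for $e\ge 3$; the exponents $e\in\{1,2\}$ correspond to the degree-one polynomials $x\pm1$ and are excluded). A minor technicality is the parity of $\gcd(q^n+1,q^{2n/l}-1)$ for $q$ odd, which does not affect the final formula.
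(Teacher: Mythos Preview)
Your approach is essentially identical to the paper's: both compute $N_M^{*,e}(q,2n)$ as the difference between the total count of SRIM polynomials of degree $2n$ and exponent $e$ and the count of those that are $M^*$-power, obtaining each term by the same odd-divisor inclusion--exclusion used in Proposition~\ref{007} (the paper simply says ``replacing $\alpha^{q^n+1}=1$ by the condition $\alpha^e=1$''). Your write-up is in fact more explicit about the interpretive conventions (e.g.\ reading $\phi(e)$ as $0$ when $e\nmid q^{2n/l}-1$) and about the delicate step in the $P_M(e)$ count, which the paper leaves implicit.
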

\begin{proof}
Let us first find out number of SRIM polynomials of degree $2n$ and 
exponent $e$ in $\fq[x]$. Note that $e$ must divide $q^n+1$ as $\alpha^{q^n+1}=1$ (by Lemma \ref{001}).
Since $\fqn_{q^{2n}}^*$ is cyclic
group the number of elements of order $e$ is given by 
$\phi(e)$. But we want to have that such an element should not belong
to any proper subfield of $\fqn_{q^{2n}}$ i.e. $e$ should not divide
any $q^{\frac{n}{l}}+1$ where $l$ is odd. Since we are considering
SRIM polynomials, by inclusion-exclusion we have that number of primitive
elements in $\fqn_{q^{2n}}$ of exponent $e$ is 
$\displaystyle
\sum\limits_{\substack{l=\textup{odd}\\l|2n}}\mu(l)\phi(e)$, whence 
 number of irreducible polynomials of degree $2n$ and 
exponent $e$ in $\fq[x]$ is $\dfrac{1}{2n}\displaystyle
\sum\limits_{\substack{l=\textup{odd}\\l|2n}}\mu(l)\phi(e)$.

Next we find out the number of $M^*$-power SRIM polynomial of
degree $2n$ and exponent $e$. As in the remarks preceding \ref{007},
replacing $\alpha^{q^n+1}=1$ by the condition $\alpha^e=1$, we have that 
number of $M^*$-power SRIM polynomial of
degree $2n$ and exponent $e$ is 
$\dfrac{1}{2n(M,q^{2n}-1)}\displaystyle
\sum\limits_{\substack{l=\textup{odd}\\l|2n}}\mu(l)(M(q^{2n/l}-1),e)$.
Hence the result follows.
\end{proof}

By similar line of arguments and the fact that $x\in\fqn_{q^n}$ if and 
only if $x^{q^n-1}=1$, we have the following lemmas, which will
help us in counting. These are some generalized results of \cite{ks}, proof of which are as same as above.
\begin{lemma}\label{009}
Let $N_M(q,k)$ denotes the number of $M$-power polynomial of degree 
$k$. Then \begin{equation}
N_M(q,k)=\dfrac{1}{k(M,q^{k}-1)}\displaystyle
\sum\limits_{l|k}\mu(l)(M(q^{k/l}-1),q^k-1).
\end{equation}
\end{lemma}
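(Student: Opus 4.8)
The plan is to run the same root-counting argument that proves Proposition~\ref{007}, but now over the field $\fqn_{q^k}$ and without any self-reciprocity constraint; in fact the substantive content is identical, so I would keep the write-up to the bookkeeping. First I would translate ``$M$-power polynomial'' into a condition on roots. If $f\in\fq[x]$ is monic irreducible of degree $k$ (with $f\neq x$) and $f(x^M)$ has a monic irreducible factor $g$ of degree $k$, choose a root $\beta\in\fqn_{q^k}$ of $g$ and set $\alpha=\beta^M$; since $g\mid f(x^M)$ we get $f(\alpha)=0$, so $\fq(\alpha)=\fqn_{q^k}$, and because $\fq(\beta)\supseteq\fq(\beta^M)=\fqn_{q^k}$ while $\beta\in\fqn_{q^k}$, we automatically have $\fq(\beta)=\fqn_{q^k}$. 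Thus $\alpha$ is a generator of $\fqn_{q^k}$ over $\fq$ lying in the image of $\theta_M\colon\fqn_{q^k}^\times\to\fqn_{q^k}^\times$, $\theta_M(x)=x^M$. Conversely, any such $\alpha$ has $\textup{min}_{\fq}(\beta)\mid f(x^M)$ irreducible of degree $k$ for each chosen $M$-th root $\beta\in\fqn_{q^k}$, so $f=\textup{min}_{\fq}(\alpha)$ is an $M$-power polynomial. As in Proposition~\ref{007}, Frobenius commutes with $\theta_M$, so the orbit $\{\beta^{q^j}\}$ maps onto $\{\alpha^{q^j}\}$ and the pairing $\beta_i^M=\alpha_i$ of roots is legitimate; since roots of distinct monic irreducibles are disjoint this gives
\[
N_M(q,k)=\frac{1}{k}\,\bigl|\{\alpha\in\fqn_{q^k}^\times:\fqn_{q^k}=\fq(\alpha),\ \alpha\in\im(\theta_M)\}\bigr|.
\]

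Next I would strip off the image condition. Since $\theta_M$ is an endomorphism of the cyclic group $\fqn_{q^k}^\times$ with $|\theta_M^{-1}(1)|=(M,q^k-1)$, every nonempty fibre of $\theta_M$ has size $(M,q^k-1)$, so counting $\alpha$'s in the image with $\fq(\alpha)=\fqn_{q^k}$ amounts to counting $\beta$'s with $\fq(\beta^M)=\fqn_{q^k}$ divided by $(M,q^k-1)$:
\[
N_M(q,k)=\frac{1}{k(M,q^k-1)}\,\bigl|\{\beta\in\fqn_{q^k}^\times:\fqn_{q^k}=\fq(\beta^M)\}\bigr|.
\]

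Finally comes the inclusion--exclusion step. Using $x\in\fqn_{q^d}$ iff $x^{q^d-1}=1$, the condition $\fq(\beta^M)=\fqn_{q^k}$ says $\beta^M$ lies in no proper subfield $\fqn_{q^{k/l}}$ ($l\mid k$, $l>1$), and $\beta^M\in\fqn_{q^{k/l}}$ is equivalent to $\beta^{M(q^{k/l}-1)}=1$, which has exactly $(M(q^{k/l}-1),q^k-1)$ solutions in $\fqn_{q^k}^\times$. M\"obius inversion over the divisor lattice of $k$ then yields
\[
N_M(q,k)=\frac{1}{k(M,q^k-1)}\sum_{l\mid k}\mu(l)\bigl(M(q^{k/l}-1),\,q^k-1\bigr),
\]
as claimed. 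There is no real obstacle here since the argument mirrors Proposition~\ref{007}; the two places most prone to a slip are the automatic generation of $\fqn_{q^k}$ by $\beta$ once $\alpha$ generates (so that no separate primitivity condition on $\beta$ is needed), and the observation that, unlike the SRIM case, the relevant subfields are now indexed by \emph{all} divisors $l\mid k$ rather than only the odd ones, because we are working inside $\fqn_{q^k}$ rather than imposing $\alpha^{q^k+1}=1$.
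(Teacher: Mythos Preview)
Your proof is correct and follows exactly the approach the paper indicates: the paper does not spell out a proof of this lemma but simply says ``by similar line of arguments and the fact that $x\in\fqn_{q^n}$ if and only if $x^{q^n-1}=1$'' the proof is the same as that of Proposition~\ref{007}. You have carried out precisely that adaptation, including the two adjustments you flag (all divisors $l\mid k$ rather than only odd ones, and the condition $x^{q^k-1}=1$ in place of $x^{q^k+1}=1$).
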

\begin{lemma}\label{010}
Let $k|n$ and $N_M(q,n,\frac{n}{k})$ denotes the number of irreducible monic polynomial $f$ over $\fq$ of 
degree $\frac{n}{k}$, such that any $M$-th root of $\alpha$ (where $f(\alpha)=0$) lies in $\fqn_{q^n}$, but not in any 
proper subfield of $\fqn_{q^n}$. Then
\begin{equation}
N_M\left(q,n,\frac{n}{k}\right)=
\dfrac{1}{k}\sum\limits_{\substack{s < k,\\ \frac{n}{k}|\frac{n}{s}}}\mu(s)
\dfrac{1}{(M,q^{\frac{n}{s}}-1)}
\sum\limits_{l|\frac{2n}{k}}
\mu\left(l\right)\left(M\left(q^{\frac{n}{kls}}
-1\right),q^{\frac{n}{s}}-1\right).
\end{equation}
\end{lemma}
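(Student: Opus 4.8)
The plan is to run the proof of Proposition~\ref{008} essentially verbatim, making two systematic changes. First, since the polynomials in question are arbitrary irreducibles rather than SRIM ones, the normalization ``$\alpha^{q^{n}+1}=1$'' used there to encode SRIM-ness is replaced by ``$\alpha^{q^{n}-1}=1$'', the condition that simply detects membership in $\fqn_{q^{n}}$; correspondingly, the fibre $\theta_{M}^{-1}(1)$ inside $\fqn_{q^{n}}$ now has size $(M,q^{n}-1)$. Second, every restriction to odd divisors is dropped, because $q^{d}-1\mid q^{n}-1$ holds for all $d\mid n$ whereas $q^{d}+1\mid q^{n}+1$ required $n/d$ odd. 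For a proper divisor $s$ of $k$ (equivalently $s<k$ with $\tfrac{n}{k}\mid\tfrac{n}{s}$) I would introduce the set
\begin{equation*}
E_{n/s,\,n/k}=\Big\{\alpha\in\fqn_{q^{n/k}}\ :\ \alpha^{q^{n/k}-1}=1,\ \ \alpha=\beta^{M}\text{ for some }\beta\in\fqn_{q^{n/s}},\ \ \fq(\alpha)=\fqn_{q^{n/k}}\Big\},
\end{equation*}
so that $E_{n,n/k}$ records the roots $\alpha$ of degree-$\tfrac{n}{k}$ irreducibles admitting at least one $M$-th root in $\fqn_{q^{n}}$, and $E_{n/s,n/k}$ is the analogue for $M$-th roots already lying in the subfield $\fqn_{q^{n/s}}$.

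First I would enumerate $E_{n,n/k}$ by counting the $\beta\in\fqn_{q^{n}}$ with $\beta^{M}\in E_{n,n/k}$. Such a $\beta$ satisfies $\beta^{q^{n}-1}=1$ together with $\beta^{M(q^{n/k}-1)}=1$ (the second condition encoding $\alpha^{q^{n/k}-1}=1$), so in the cyclic group of order $q^{n}-1$ there are $\bigl(M(q^{n/k}-1),\,q^{n}-1\bigr)$ of them. Imposing $\fq(\beta^{M})=\fqn_{q^{n/k}}$ amounts to excluding $\beta^{M}\in\fqn_{q^{n/(kl)}}$ for the proper divisors $l$ of $\tfrac{n}{k}$; an inclusion--exclusion then gives $\sum_{l\mid n/k}\mu(l)\bigl(M(q^{n/(kl)}-1),\,q^{n}-1\bigr)$, and dividing by $(M,q^{n}-1)$ yields $|E_{n,n/k}|$. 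Running the identical computation inside $\fqn_{q^{n/s}}$ gives $|E_{n/s,n/k}|$: every $q^{n}$ inside a $\gcd$ becomes $q^{n/s}$, the normalizing factor becomes $(M,q^{n/s}-1)$, and the $l$-sum's exponents pick up the extra $s$ in the denominator coming from the smaller ambient field, exactly as in Proposition~\ref{008}.

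Next comes the second, outer M\"obius inversion. We keep only those $\alpha$ possessing an $M$-th root $\beta$ with $\fq(\beta)=\fqn_{q^{n}}$ exactly; using $x\in\fqn_{q^{d}}\iff x^{q^{d}-1}=1$ and $q^{d}-1\mid q^{n}-1\iff d\mid n$, the $\alpha$ whose $M$-th roots are confined to the proper subfield $\fqn_{q^{n/s}}$ are governed by $|E_{n/s,n/k}|$, and inclusion--exclusion over the proper $s\mid k$ removes them. Forming $\sum_{s}\mu(s)|E_{n/s,n/k}|$ and then passing from a count of roots to a count of irreducible polynomials, with the degree factor and the fibre-of-$\theta_{M}$ normalizations organized exactly as in Proposition~\ref{008}, produces the displayed double sum. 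The passage from roots to polynomials is valid because every condition involved is stable under the Frobenius $x\mapsto x^{q}$, so a qualifying irreducible has its whole root set inside the admissible set, and root sets of distinct irreducibles are disjoint (as already exploited in Propositions~\ref{007} and~\ref{008}); one checks that the case $k=1$ collapses to Lemma~\ref{009}.

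I expect the only genuine work --- precisely as in Proposition~\ref{008} --- to be the bookkeeping of the two nested M\"obius sums and of the normalizing constants: one must verify that the constraints ``$l\mid\tfrac{n}{k}$'' and ``$s\mid k$, $s<k$'' are exactly the hypotheses under which the relevant subfield chain $\fqn_{q^{n/(kl)}}\subseteq\fqn_{q^{n/k}}\subseteq\fqn_{q^{n/s}}\subseteq\fqn_{q^{n}}$ is valid, and that restricting the ambient field at the $s$-th stage propagates correctly through each $\gcd$ and through the size of $\theta_{M}^{-1}(1)$. No idea beyond Proposition~\ref{008} is needed; if anything the absence of the ``$+1$'' SRIM constraint simplifies the combinatorics, as no parity of divisors intervenes.
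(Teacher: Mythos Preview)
Your proposal is correct and matches the paper's own approach exactly: the paper does not give a separate proof of this lemma but simply states that it follows ``by similar line of arguments'' to Proposition~\ref{008}, using the fact that $x\in\fqn_{q^{n}}$ if and only if $x^{q^{n}-1}=1$. Your two systematic changes---replacing the SRIM constraint $\alpha^{q^{n}+1}=1$ by $\alpha^{q^{n}-1}=1$ and dropping the odd-divisor restrictions---are precisely the modifications the paper has in mind.
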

\begin{lemma}\label{035}
Let $N_M^{e}(q,n)$ denotes the number polynomials of
degree $n$ and exponent $e$ which are not $M$-power polynomial. Then we have 
\begin{align*}
N_M^{e}(q,n)=
\dfrac{1}{n}\displaystyle
\sum\limits_{\substack{l|n}}\mu(l)\phi(e)
-
\dfrac{1}{n(M,q^{n}-1)}\displaystyle
\sum\limits_{\substack{l|2n}}\mu(l)(M(q^{n/l-1}),e).
\end{align*}
\end{lemma}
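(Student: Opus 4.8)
The plan is to follow, essentially verbatim, the proof of Lemma \ref{033} together with the remarks preceding Proposition \ref{007}, the only structural change being that the defining constraint $\alpha^{q^n+1}=1$ of a SRIM polynomial is replaced by the constraint $\alpha^{q^n-1}=1$ that cuts out $\mathbb{F}_{q^n}$ inside a fixed algebraic closure of $\mathbb{F}_q$; since now every subfield $\mathbb{F}_{q^{n/l}}$, $l\mid n$, is relevant (and not only those coming from odd $l$), the Möbius sums will range over all divisors of $n$. By definition $N_M^e(q,n)$ is the number of monic irreducible polynomials of degree $n$ and exponent $e$ minus the number of those which are $M$-power polynomials, so I would compute the two counts separately and subtract.

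\emph{Counting irreducible polynomials of degree $n$ and exponent $e$.} A root of such a polynomial is an element of $\mathbb{F}_{q^n}^{\times}$ of multiplicative order $e$ generating $\mathbb{F}_{q^n}$ over $\mathbb{F}_q$. As $\mathbb{F}_{q^{n/l}}^{\times}$ is cyclic, it has $\phi(e)$ elements of order $e$ when $e\mid q^{n/l}-1$ and none otherwise; inclusion--exclusion over the subfields $\mathbb{F}_{q^{n/l}}$, $l\mid n$, then gives $\sum_{l\mid n}\mu(l)\phi(e)$ generators of order $e$, in the same convention already used in Lemma \ref{033}. Dividing by $n$ yields $\tfrac1n\sum_{l\mid n}\mu(l)\phi(e)$ polynomials.

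\emph{Counting $M$-power polynomials of degree $n$ and exponent $e$.} Arguing as in Proposition \ref{007} and Lemma \ref{009}, mutatis mutandis, such a polynomial is the minimal polynomial of an $\alpha\in\mathbb{F}_{q^n}^{\times}$ with $\alpha^e=1$, lying in the image of $\theta_M\colon x\mapsto x^M$ on $\mathbb{F}_{q^n}^{\times}$, and with $\mathbb{F}_q(\alpha^M)=\mathbb{F}_{q^n}$. Since $\theta_M$ is $(M,q^n-1)$-to-one, the number of such polynomials equals $\tfrac1{n(M,q^n-1)}$ times the number of $\beta\in\mathbb{F}_{q^n}$ with $\beta^M$ of order dividing $e$ and $\mathbb{F}_q(\beta^M)=\mathbb{F}_{q^n}$. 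Putting, for $l\mid n$, $E_l=\{\alpha\in\mathbb{F}_{q^n}:\alpha^e=1,\ \mathbb{F}_q(\alpha^M)=\mathbb{F}_{q^{n/l}}\}$, the two requirements $\alpha^e=1$ and $\alpha^M\in\mathbb{F}_{q^{n/l}}$ (equivalently $\alpha^{M(q^{n/l}-1)}=1$) describe a subgroup of the cyclic group $\mathbb{F}_{q^n}^{\times}$, so $|E_l|=(M(q^{n/l}-1),e)$; inclusion--exclusion over $l\mid n$ then produces $\tfrac1{n(M,q^n-1)}\sum_{l\mid n}\mu(l)(M(q^{n/l}-1),e)$.

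Subtracting the second count from the first gives the stated formula for $N_M^e(q,n)$. The two Möbius inversions are routine, being word-for-word those in the proof of Lemma \ref{033} with $q^n+1$ replaced by $q^n-1$ and the parity restriction on $l$ dropped. The one step that genuinely needs care --- and which I would spell out following Proposition \ref{007} --- is the $(M,q^n-1)$-to-one passage between a generator $\alpha$ and its $M$-th roots $\beta$, and in particular keeping straight the generating conditions $\mathbb{F}_q(\beta)=\mathbb{F}_{q^n}$ versus $\mathbb{F}_q(\beta^M)=\mathbb{F}_{q^n}$; this is where the content lies, and once that correspondence is set up the remainder is bookkeeping.
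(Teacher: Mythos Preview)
Your proposal is correct and follows precisely the route the paper indicates: the paper does not give a standalone proof of this lemma but states that it follows ``by similar line of arguments'' to Lemma~\ref{033}, replacing the SRIM condition $\alpha^{q^n+1}=1$ by $\alpha^{q^n-1}=1$, which is exactly the modification you make (together with dropping the oddness restriction on $l$). Your write-up is in fact more explicit than the paper's, and your remark that the careful point is the $(M,q^n-1)$-to-one passage between $\alpha$ and its $M$-th roots is well taken.
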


Let $R^*_M(q,2n)$ denotes the number of pairs $\{\phi,\phi^*\}$, where 
$\phi~(\neq\phi^*)$ is an irreducible monic polynomial of degree $n\geq 2$ and $\phi$ is an $M$-power polynomial. 
Then \begin{align*}R^*_M(q,2n)=\begin{cases}
\frac{1}{2}N_M(q,n) & n\text{ is odd}\\
\frac{1}{2}\left(N_M(q,n)-N_M^*(q,n)\right)& n\text{ is even}
\end{cases}.
\end{align*}
Let $k|n$ and $R^*_M(q,2n,\frac{2n}{k})$ denotes the number of pairs $\{\phi,\phi^*\}$, where 
$\phi~(\neq\phi^*)$ is an irreducible monic polynomial $f$ over $\fq$ of 
degree $\frac{n}{k}$, such that any $M$-th root of $\alpha$ (where $f(\alpha)=0$) lies in $\fqn_{q^n}$, but not in any 
proper subfield of $\fqn_{q^n}$. Then 
 \begin{align*}R^*_M(q,2n,\frac{2n}{k})=\begin{cases}
\frac{1}{2}\left(N_M(q,n,n/k)-N_M^*(q,n,n/k)\right)& n\text{ is even}, k\text{ is odd}\\
\frac{1}{2}N_M(q,n,n/k) & \text{otherwise}\\
\end{cases}.
\end{align*}

Let $R^{*,e}_M(q,2n)$ denotes the number of pairs $\{\phi,\phi^*\}$, where 
$\phi~(\neq \phi^*)$ is an irreducible polynomial of degree $n\geq 2$, which
is not an $M$-power polynomial. Then we have 
\begin{align*}
R^{*,e}_M(q,2n)=
\begin{cases}
\frac{1}{2}N^e_M(q,n)&n\text{ is odd}\\
\frac{1}{2}(N^e_M(q,n)-\frac{1}{n(M,q^{n}-1)}\displaystyle
\sum\limits_{l|k}\mu(l)(Mq^e,q^\frac{n}{l}+1)&n\text{ is even}
\end{cases}.
\end{align*}
With the counting in hand we now move to next section, where we calculate the generating functions in the 
indeterminate $u$.
\subsection{Auxiliary results}
Before proceeding further, we note down the following lemma, which helps
in defining the indicator function (see \ref{indicatorfunction}) corresponding to
a class of irreducible polynomials having same degree and exponent.

\begin{lemma}\label{034}
Let $f_1,f_2\in\fq[x]$ be monic irreducible polynomials of degree $n$
and exponent $e$. Then $f_1(x^M)$ has a SRIM factor of degree 
$2l$ if and only 
if $f_2(x^M)$ has a SRIM factor of degree 
$2l$.
\end{lemma}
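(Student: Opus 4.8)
The plan is to reduce the statement about SRIM factors of $f_i(x^M)$ to a statement purely about the degree and exponent of $f_i$, using the factorization theory of composed polynomials $f(x^M)$ recalled from \cite{bu}. First I would recall that, as noted in the paragraph preceding Lemma \ref{033}, the splitting behaviour of $f(x^M)$ over $\fq$ depends only on $\deg f$ and the exponent $e$ of $f$ (the multiplicative order of a root): if $\alpha$ is a root of $f_1$ and $\alpha'$ a root of $f_2$, both lying in $\fqn_{q^n}$ and both of multiplicative order $e$, then there is a field automorphism-compatible bijection matching the $M$-th roots of $\alpha$ with those of $\alpha'$ preserving the degrees of their minimal polynomials over $\fq$. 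Concretely, the roots of $f_i(x^M)$ are exactly the $M$-th roots $\beta$ of the roots of $f_i$, and $\deg(\min_{\fq}(\beta)) = [\fq(\beta):\fq]$ is governed by $\ord(\beta)$, which in turn is determined by $\ord(\alpha)=e$, by $M$, and by which coset of the image of $\theta_M$ we are in — data identical for $f_1$ and $f_2$.

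Next I would pin down precisely what makes a degree-$2l$ irreducible factor $g$ of $f_i(x^M)$ be SRIM. By Lemma \ref{001}, $g$ is SRIM of degree $2l$ iff its roots $\beta$ satisfy $\beta^{q^l+1}=1$ and $\fq(\beta)=\fqn_{q^{2l}}$. So $f_i(x^M)$ has a SRIM factor of degree $2l$ iff there exists an $M$-th root $\beta$ of some root $\alpha$ of $f_i$ with $\beta^{q^l+1}=1$ and $[\fq(\beta):\fq]=2l$. Since $\alpha=\beta^M$ and $\beta$ ranges over all $M$-th roots, the existence of such a $\beta$ depends only on: the set $\{\ord(\alpha) : f_i(\alpha)=0\}=\{e\}$ (a single value since all roots of an irreducible polynomial are Frobenius-conjugate, hence share the same order), the integer $M$, and $q$. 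Because $\ord(\alpha)=e$ is common to $f_1$ and $f_2$, the set of possible orders of $M$-th roots $\beta$ — namely all $d$ with $d^M \equiv$ something of order $e$, i.e. all $d \mid M e$ with $\ord(\beta^M)=e$ — is the same for both; and for each such $d$, whether some such $\beta$ additionally satisfies $\beta^{q^l+1}=1$ and generates $\fqn_{q^{2l}}$ is a condition on $d$, $q$, $l$ alone. Hence the two existence statements are equivalent.

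I would organize the write-up as follows: (i) fix roots $\alpha_1, \alpha_2$ of $f_1, f_2$ and note $\ord(\alpha_1)=\ord(\alpha_2)=e$ and $\fq(\alpha_i)=\fqn_{q^n}$; (ii) observe that the multiset of orders of elements of $\theta_M^{-1}(\alpha_i)$ inside $\overline{\fq}$ depends only on $e$ and $M$ (the fibre $\theta_M^{-1}(\alpha_i)$ is a coset of the group $\mu_M$ of $M$-th roots of unity, translated so that one representative has order dividing $Me$, and the distribution of orders in $\alpha_i\mu_M$ is determined by $\ord(\alpha_i)=e$); (iii) translate "SRIM factor of degree $2l$" via Lemma \ref{001} into "$\exists \beta \in \theta_M^{-1}(\alpha_i)$ with $\beta^{q^l+1}=1$ and $\fq(\beta)=\fqn_{q^{2l}}$"; (iv) note $\beta^{q^l+1}=1$ together with $[\fq(\beta):\fq]=2l$ are conditions depending only on $\ord(\beta)$, which by (ii) takes the same possible values for $i=1$ and $i=2$; conclude. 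The main obstacle I anticipate is step (ii): making rigorous the claim that the order distribution in the coset $\alpha_i \mu_M \subset \overline{\fq}^\times$ depends only on $\ord(\alpha_i)$ and $M$ — this requires a short cyclic-group argument (work inside the cyclic group $\langle \zeta \rangle$ where $\zeta$ has order $\lcm(\ord(\alpha_i), M)=:N$, write $\alpha_i = \zeta^{a_i}$ with $\ord(\zeta^{a_i})=e$ so $\gcd(a_i, N)=N/e$, and observe the coset $\{\zeta^{a_i + jN/M} : 0 \le j < M\}$ has an order-distribution determined by $a_i \bmod N$ up to an automorphism of $\langle\zeta\rangle$ fixing the relevant data) — but once phrased correctly it is routine, and the argument is essentially the same as the counting already carried out in the proofs of Propositions \ref{007} and \ref{008} and Lemma \ref{033}.
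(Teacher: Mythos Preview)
Your proposal is correct and follows essentially the same approach as the paper: both arguments reduce to (a) the orders of the roots of $f_i(x^M)$ depend only on $(n,e,M)$, and (b) whether an irreducible factor of degree $2l$ is SRIM is determined by the order of its roots via Lemma~\ref{001}. The paper's proof is simply a two-line version of yours, invoking \cite{bu} directly for step~(a) rather than unpacking the cyclic-group coset argument you sketch in step~(ii); your more detailed treatment is not needed but does no harm.
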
 
\begin{proof}
Since $f_1$ and $f_2$ are of same degree and same exponent, by 
\cite{bu} the roots of $f_1(x^M)$ and $f_2(x^M)$
have same order. Hence the result follows from Lemma \ref{001}.
\end{proof}
\begin{lemma}
Let $f=gg^*$ be a type $2$ polynomial. Then all irreducible factors of $g(x^M)$ are of type $2$.
\end{lemma}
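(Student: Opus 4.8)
The plan is to prove the seemingly stronger statement that \emph{no} irreducible factor of $g(x^M)$ is self-reciprocal; since $x-1$ and $x+1$ are themselves self-reciprocal (a one-line check from the definition gives $(x\pm 1)^*=x\pm 1$), this simultaneously rules out type $3$ factors, so each irreducible factor $h$ of $g(x^M)$ pairs with $h^*\neq h$ to produce a type $2$ polynomial $hh^*$, which is exactly the assertion in the authors' (pair-based) terminology for type $2$. First I would record the easy bookkeeping: because $f=gg^*$ is of type $2$, the polynomial $g$ is monic irreducible with $g(0)\neq 0$ and $g\neq g^*$, and $g^*$ is again monic irreducible over $\fq$ of the same degree as $g$; moreover $g(x^M)$ has constant term $g(0)\neq 0$, so every irreducible factor $h$ of $g(x^M)$ satisfies $h(0)\neq 0$, whence $h\neq x$ and the dual $h^*$ is well-defined.

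The heart of the argument is a short root-chase. Suppose for contradiction that some irreducible factor $h\mid g(x^M)$ satisfies $h=h^*$, and let $\beta$ be a root of $h$ in a splitting field of $h$ over $\fq$; then $\beta\neq 0$, and since $h\mid g(x^M)$ we get $g(\beta^M)=0$. Because $h$ is self-reciprocal, $\beta^{-1}$ is also a root of $h$, so $g(\beta^{-M})=0$; rewriting $\beta^{-M}=(\beta^M)^{-1}$ and invoking the definition of the dual, this says precisely $g^*(\beta^M)=0$. Thus $\beta^M$ is a common root of the two monic irreducible polynomials $g$ and $g^*$ over $\fq$, forcing $g=g^*$ and contradicting the hypothesis that $f$ is of type $2$.

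Combining the two paragraphs, every irreducible factor $h$ of $g(x^M)$ satisfies $h\neq h^*$ and $h\notin\{x-1,x+1\}$, so $hh^*$ is a genuine type $2$ $*$-irreducible polynomial. I do not anticipate any serious obstacle: the only points that require a little care are the well-definedness of the dual on the factors (handled by $g(0)\neq 0$) and the observation that $x\pm 1$ are self-reciprocal, which is what upgrades the conclusion from "not type $1$" to "type $2$". One could also note in passing that $h^*$ is then an irreducible factor of $g^*(x^M)=(g(x^M))^*$, so the type $2$ pieces of $f(x^M)$ split cleanly between the $g$-part and the $g^*$-part, but this is not needed for the statement.
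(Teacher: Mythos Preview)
Your proof is correct and follows essentially the same strategy as the paper's: assume a self-reciprocal irreducible factor $h$ of $g(x^M)$, chase roots, and force $g=g^*$. Your version is in fact a bit cleaner than the paper's argument, which splits into two cases (either all $\alpha_i^M\neq\alpha_i^{-M}$, making the root set of $g$ inversion-closed, or some $\alpha_j^M=\pm 1$ is a root of $g$); you bypass this dichotomy by observing directly that $g(\beta^{-M})=0$ is equivalent to $g^*(\beta^M)=0$, so a single common root of $g$ and $g^*$ already yields the contradiction. You also explicitly dispose of the type~$3$ factors $x\pm 1$ via their self-reciprocity, a point the paper leaves implicit.
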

\begin{proof}
On the contrary if possible let $h$ be a type $1$ polynomial, which is an irreducible
polynomial of degree $2m$. Let $\alpha_1,\alpha_2,\cdots,\alpha_n,\alpha_1^{-1},\alpha_2^{-1},\cdots,\alpha_n^{-1}$ be a set of roots of $h$
in the splitting field of $h$. 
Then $\alpha_1^M,\alpha_2^M,\cdots,\alpha_n^M,\alpha_1^{-M},\alpha_2^{-M},\cdots,\alpha_n^{-M}$ are roots of $g$. Then as in Lemma \ref{004}, these elements are the only roots of $g$. Now if for all $1\leq i\leq n$, $\alpha_i^M\neq\alpha_i^{-M}$, then $g$ will be a self-reciprocal polynomial. 
Hence there exists $j$ such that $\alpha_j^M=\alpha_j^{-M}$, which implies that
$\alpha_j^M=\pm1$. This is a contradition, since $\pm 1$ are not roots of $g$.
\end{proof}
\begin{corollary}
Let $f=gg^*$ be a type $2$ polynomial of degree $2n$. Then $\alpha^M=C_f$ has a solution in $\Sp$
if and only if $g$ is an $M$-power polynomial.
\end{corollary}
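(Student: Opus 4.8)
The plan is to realise $C_f$ and every prospective $M$-th root of it inside a single maximal torus of $\Sp$ isomorphic to $\fqn_{q^n}^\times$, and then to read off solvability of $\alpha^M=C_f$ from the structure of $M$-th powers in that cyclic group, which is exactly the condition defining an $M$-power polynomial.

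First I would pin down $C_f$ and its centraliser. Since $g$ is irreducible of degree $n$ and $g\neq g^*$, the factors $g,g^*$ of $f$ are coprime, so $f$ is at once the characteristic and the minimal polynomial of the regular semisimple element $C_f\in\Sp$, and the Chinese Remainder Theorem gives $\fq[C_f]\cong\fq[x]/(g)\times\fq[x]/(g^*)\cong\fqn_{q^n}\times\fqn_{q^n}$. Because $C_f$ is cyclic we get $C_{\GL(2n,q)}(C_f)=\fq[C_f]^\times\cong\fqn_{q^n}^\times\times\fqn_{q^n}^\times$, and $C_f$ corresponds here to $(\theta,\theta^*)$, where $\theta$ — the image of $x$ modulo $g$ — is a root of $g$ generating $\fqn_{q^n}$ over $\fq$. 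The symplectic form makes the $g$-primary and $g^*$-primary submodules totally isotropic and pairs them nondegenerately, so passing to $\Sp$ and then projecting to the first coordinate is injective; by Lemma~\ref{002}, applied with $\phi=g$ and partition $(1)$ (so $Q=q^n$ and $B(g)B(g^*)=|\GL(1,q^n)|=q^n-1$), the torus $T:=C_{\Sp}(C_f)$ has order $q^n-1$. Hence $T\cong\fqn_{q^n}^\times$, the element $C_f$ maps to the generator $\theta$, and the $M$-th power map on $T$ becomes $\lambda\mapsto\lambda^M$ on $\fqn_{q^n}^\times$.

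Then I would chain the equivalences. Any $\alpha\in\Sp$ with $\alpha^M=C_f$ commutes with $C_f$ and so lies in $T$, and conversely an element of $T$ whose $M$-th power is $C_f$ already lies in $\Sp$; thus $\alpha^M=C_f$ is solvable in $\Sp$ iff $\theta$ is an $M$-th power in $\fqn_{q^n}^\times$. On the other hand, $g$ is an $M$-power polynomial iff $g(x^M)$ has an irreducible factor $h$ of degree $n$; for a root $\mu$ of such an $h$ one has $\mu^M\in\fqn_{q^n}$ (it is a root of $g$), hence $\fq(\mu)\supseteq\fqn_{q^n}$, and the equality $[\fq(\mu):\fq]=\deg h=n$ forces $\mu\in\fqn_{q^n}^\times$ with $\mu^M$ a root of $g$, i.e.\ $\mu^M=\theta^{q^i}$ for some $i$. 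Since the $M$-th powers in $\fqn_{q^n}^\times$ form a subgroup stable under $\lambda\mapsto\lambda^q$, this holds iff $\theta$ itself is an $M$-th power in $\fqn_{q^n}^\times$; reading the chain backwards turns each such $\mu$ into the element of $T$ that is an $M$-th root of $C_f$. Combining the two equivalences proves the corollary. (Equivalently, in the language of the central join of~\cite{ta1},~\cite{ta2}: such an $h$ is of type $2$ by the preceding lemma, so $hh^*$ is a genuine type-$2$ polynomial of degree $2n$, the associated element $Y\in\Sp$ has $Y^M$ of characteristic polynomial $f$, and $Y^M$ is $\Sp$-conjugate to $C_f$ since a type-$2$ part of a conjugacy class is determined by its polynomial alone.)

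The hard part will be the bookkeeping that makes the torus identification correct and, in the ``if'' direction, rules out any symplectic obstruction beyond the $\GL(2n,q)$ one: one must know that a degree-$n$ irreducible factor of $g(x^M)$ is of type $2$, so that it contributes a genuine maximal torus of $\Sp$ rather than an element lying in a smaller symplectic factor or carrying sign data. This is precisely what the preceding lemma secures, which is why the statement is recorded here as a corollary; the remaining points — coprimality of $g$ and $g^*$, Frobenius-stability of $M$-th powers in $\fqn_{q^n}^\times$, and the order of $C_{\Sp}(C_f)$ from Lemma~\ref{002} — are routine.
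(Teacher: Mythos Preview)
Your argument is correct and rather cleaner than what the paper does. The paper's one-line proof simply invokes ``the same line of proof as of Lemmas~\ref{004} and~\ref{005}'': one diagonalises $C_f$ and a putative $M$-th root $\alpha$ over a splitting field, matches eigenvalues $\alpha_i^M=\lambda_{j(i)}$, and reads off that the minimal polynomial of $\alpha$ furnishes an irreducible factor of $g(x^M)$ of the right degree (and conversely that $C_{hh^*}^M$ is conjugate to $C_f$). You instead pass through the centraliser: the observation that any $\alpha$ with $\alpha^M=C_f$ automatically commutes with $C_f$ forces $\alpha\in T=C_{\Sp}(C_f)$, and Lemma~\ref{002} pins down $T\cong\fqn_{q^n}^\times$ with $C_f\mapsto\theta$, so the whole question collapses to whether a single generator of $\fqn_{q^n}^\times$ is an $M$-th power there. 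What this buys you is that both directions become the same cyclic-group statement, and no diagonalisation over $\fqn_{q^{2n}}$ is needed; what the paper's route buys is an exact parallel with the type-$1$ argument, so that the corollary really is a corollary of the preceding lemma plus the template of Lemmas~\ref{004}--\ref{005}. Your parenthetical ``central join'' remark at the end is in fact the paper's Lemma~\ref{005} argument in this setting, so you have recorded both proofs.
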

\begin{proof}
Follows from the same line of proof as of Lemmas \ref{004} and \ref{005}.
\end{proof}
\begin{lemma}
Let $f$ be a type $1$ polynomial. Then all irreducible factors of $f(x^M)$ are of type $1$.
\end{lemma}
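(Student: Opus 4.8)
The plan is to argue by contradiction, mirroring the proof just given for type $2$ polynomials. Write $\deg f=2n$ and suppose $f(x^M)$ has an irreducible factor $h$ which is not of type $1$: then either $h(x)=x\pm 1$ (type $3$), or $h\neq h^*$, so that $hh^*$ is of type $2$. The type $3$ alternative is immediate --- if $x\mp 1$ divides $f(x^M)$ then $f\big((\pm 1)^M\big)=f(\pm 1)=0$, which is impossible since $f$ is irreducible of degree $2n\geq 2$ and has no root in $\fq$; in particular $\pm 1$ is never a root of $f$, a fact reused below.

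It remains to exclude a non-self-reciprocal irreducible factor $h$. First I would record the ``only roots'' step (as in Lemma \ref{004}): if $\beta$ is a root of $h$ and $\alpha:=\beta^M$, then $\alpha$ is a root of $f$, $\fq(\alpha)\subseteq\fq(\beta)$, and, $f$ being irreducible, the map $x\mapsto x^M$ carries the Frobenius orbit of $\beta$ onto the full root set of $f$. A short computation shows $f(x^M)$ is itself self-reciprocal (since $f=f^*$ has even degree $2n$, one gets $\big(f(x^M)\big)^*=f(x^M)$), so $h^*$ is again a factor of $f(x^M)$ and $hh^*\mid f(x^M)$. Since $f$ is SRIM of degree $2n$, Lemma \ref{001} gives $\alpha^{q^n+1}=1$, hence $\beta^{M(q^n+1)}=1$. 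The contradiction I would aim for is that $h$ is then forced to be self-reciprocal: $(\beta^{-1})^M=\alpha^{-1}$ is again a root of $f$, with $\alpha^{-1}\neq\alpha$ (as $\pm 1$ is not a root of $f$), so $\beta^{-1}$ is a root of $f(x^M)$ lying in $\fq(\beta)$, and the aim is to deduce that $\beta^{-1}$ is a Frobenius conjugate of $\beta$, i.e.\ $\beta^{q^j+1}=1$ for some $j$, whence $h=h^*$ --- contradicting the choice of $h$. Combined with the type $3$ case, this shows every irreducible factor of $f(x^M)$ is of type $1$.

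The main obstacle I anticipate is exactly the last step, passing from $\beta^{M(q^n+1)}=1$ to $\beta^{q^j+1}=1$ for some $j$. This is an arithmetic statement about how $M$ interacts with the factorisation of $q^{2n}-1$ relative to $q^n+1$, and is where the hypothesis $(M,q)=1$, part (2) of Lemma \ref{001}, and the description of the irreducible factors of $g(x^M)$ through the degree and exponent of $g$ (Lemma \ref{034} together with \cite{bu}) must be combined. Concretely, I would reduce to $M$ a prime power, analyse the order of $\beta$ one prime at a time, and show that a non-self-reciprocal $h$ would force a suitable power of $\beta$ to be an $M$-th root of $\pm 1$ lying over $f$, i.e.\ would put $\pm 1$ among the roots of $f$ --- the contradiction already isolated. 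This bookkeeping parallels, but is more delicate than, the ``$\alpha_j^M=\pm1$'' step in the proof of the type $2$ lemma.
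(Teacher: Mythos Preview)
Your instinct that the passage from $\beta^{M(q^n+1)}=1$ to $\beta^{q^{j}+1}=1$ is the crux is exactly right --- but this step cannot be completed, because the lemma as stated is false. Take $q=3$, $f(x)=x^{2}+1$ (irreducible and self-reciprocal over $\mathbb{F}_3$, so type~$1$) and $M=5$. Then
\[
f(x^{5})=x^{10}+1=(x^{2}+1)\,(x^{8}+2x^{6}+x^{4}+2x^{2}+1),
\]
and the roots of the degree-$8$ cofactor are precisely the primitive $20$th roots of unity. Since the multiplicative order of $3$ modulo $20$ is $4$, this cofactor splits over $\mathbb{F}_3$ into two irreducible quartics; the Frobenius orbit of such a root $\beta$ is $\{\beta,\beta^{3},\beta^{9},\beta^{7}\}$, and $\beta^{-1}=\beta^{19}$ lies in the \emph{other} orbit, so neither quartic is self-reciprocal. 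Equivalently, by Lemma~\ref{001} a SRIM quartic over $\mathbb{F}_3$ must divide $x^{3^{2}+1}-1=x^{10}-1$, which a primitive $20$th root of unity does not. Hence $f(x^{5})$ has irreducible factors that are not of type~$1$, and the arithmetic reduction you sketch (forcing $\beta^{q^{j}+1}=1$ from $\beta^{M(q^{n}+1)}=1$) is blocked: here $\beta^{20}=1$ but $3^{j}+1\in\{4,10,8,2\}\pmod{20}$ for all $j$.

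The paper's own argument fails at the analogous point: from $\alpha_1^{-M}=\alpha_l^{M}$ it infers $\alpha_1^{M}=\alpha_l^{-M}$ and declares ``a contradiction'', but none has been exhibited --- $\alpha_l^{-1}$ is a root of $g^{*}$, not of $g$, and nothing forbids $\alpha_1^{M}=(\alpha_l^{-1})^{M}$. Indeed the paper silently reverses course later: the proof of Proposition~\ref{014} says, for a type~$1$ polynomial $f\notin\Phi_M^{*}$, that ``two kinds of polynomials can occur in factorization of $f(x^{M})$. It can be of either type~$1$ or type~$2$'', and the indicator machinery $G_f$ of Definition~\ref{indicatorfunction} is built precisely to accommodate both. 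So the correct takeaway is that your first paragraph already proves all that is true --- no factor of $f(x^{M})$ is of type~$3$ --- while type~$2$ factors genuinely occur; the enumerative results downstream already allow for this.
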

\begin{proof}
If possible, on contrary assume the $h=gg^*$ is a factor of $f(x^M)$ of type $2$. Then there exists a root $\alpha$ of $g$ such that $\alpha^{-1}$ is not a root of $g$. As in Lemma \ref{004}, if $\Lambda=\{\alpha=\alpha_1,\alpha_2,\cdots,\alpha_k\}$ are roots of $g$, we get that the only roots of $f$ are $\Lambda^M=\{\alpha^M=\alpha_1^M,\alpha_2^M,\cdots,\alpha_k^M\}$. Since $f$ is of type $1$, we have that $\pm1$ is not a root of $f$. Since the $M$-th powers might be the same, we choose a complete set of roots from $\Lambda^M$ of $f$, say $\{\alpha_1^M,\alpha_2^M,\cdots,\alpha_j^M\}$, after reindexing the set, if necessary. Note that for all $\alpha_m\in\Lambda$, we have $\alpha_m^M\neq\alpha_m^{-M}$. Since $\Lambda^M$ is closed under taking inverses, we see that there exists $\alpha_l$ such that $\alpha_1^{-M}=\alpha_l^M$ which implies that $\alpha_1^{M}=\alpha_l^{-M}$, which is a contradiction.

\end{proof}

Now we want to calculate the number of $M^*$-power SRIM
 polynomial, which contributes to finding out the generating function for
the number of separable conjugacy classes in $\Sp$.
\begin{define}
For a polynomial $f\in\fq[x]$, define \textbf{$M$-power spectrum of $f$} 
to be the set of degrees, of the irreducible factors of $f(x^M)$. Denote 
the set $M$-power spectrum of $f$ by $\mathcal{D}_M(f)$. Define
the \textbf{$M^*$-power spectrum of $f$} to be the set 
$\{l\in\mathcal{D}_M(f):f(x^M)\text{ has a SRIM factor of degree }l \}$,
which will be denoted as $\mathcal{D}_M^*(f)$.
\end{define}
\begin{remark}
We have that $f$ is an $M$-power polynomial (or $M^*$-power 
polynomial) if and only if 
$M\in\mathcal{D}_M(f)$.
\end{remark}
\begin{define}\label{indicatorfunction} For a non $M^*$-power SRIM polynomial $f$,
define the infinite product 
\begin{align*}
G_f(u)=\dfrac{1}{
\prod\limits_{i\in\mathcal{D}^*_M(f)}\left(1-u^\frac{i}{2}\right)
\prod\limits_{j\in\mathcal{D}_M(f)\setminus
\mathcal{D}_M^*(f)}\left(1-u^j\right)}.
\end{align*}
Define the \textbf{indicator function corresponding to $f$} be the function
$\mathcal{I}_M(f):\mathbb{N}\rightarrow\{0,1\}$ as follows
\begin{align*}
\mathcal{I}_M(f)(k)=\begin{cases}
1\text{ if coefficient of }u^k \text{ in }G_f(u)\neq 0\\
0\text{ otherwise}
\end{cases}.
\end{align*}
\end{define}
\begin{remark}\label{035}
Because of \ref{034} the indicator function is same for all irreducible
polynomial $f$ of degree $n$ and exponent $e$. Hence we will
denote it by $\mathcal{I}_{n,e}$.
\end{remark}
We end this section with the following notations, which will be used throughout frequently.
\begin{notation}
For a given matrix $X\in\textup{G}(m,\fq)$, we will use 
\begin{enumerate} 
\itemindent=-13pt
\item $\Delta_X$ to denote the attached combinatorial data,
\item $c_X(t)$ to denote the characteristic polynomial of $X$,
\item $m_X(t)$ to denote the minimal polynomial of $X$.
\end{enumerate}
\end{notation}

\section{Generating Functions for Separable  Matrices}
From this section onward we will be providing the generating functions for different class of elements. The route is as follows.
First we work with the conjugacy classes and then make use of orbit-stabilizer theorem to obtain the
corresponding generating functions concerning probability. We will start with the case of a matrix being separable.
\begin{lemma}\label{004}
Let $f$ be an $SRIM$ polynomial of degree $2k$, $k\geq 1$ and $\alpha^M=C_f$. Then $f(x^M)$ has an $SRIM$ factor of degree $2k$.
\end{lemma}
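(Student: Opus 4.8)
The plan is to argue at the level of roots in $\overline{\fq}$ and then descend to the base field. Let $f$ be SRIM of degree $2k$, and let $\beta$ be a root of $f$, so that $\beta^{q^k+1}=1$ by Lemma \ref{001}; in particular $\beta \in \fqn_{q^{2k}}$ and $\fqn_{q^{2k}}=\fq(\beta)$. Since $\alpha^M = C_f$, the companion matrix $\alpha$ has characteristic polynomial $f$, so the eigenvalues of $\alpha$ are exactly the roots of $f$, and any eigenvalue $\gamma$ of $\alpha$ satisfies $\gamma^M = \beta$ for some root $\beta$ of $f$. First I would fix one such $\gamma$ with $\gamma^M = \beta$, $\beta$ a root of $f$, and let $g = \mathrm{min}_{\fq}(\gamma)$. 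The strategy is to show that $\deg g = 2k$ and that $g$ is SRIM.

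For the degree: since $\gamma^M=\beta$ and $\fq(\beta)=\fqn_{q^{2k}}$, we have $\fqn_{q^{2k}} = \fq(\beta) \subseteq \fq(\gamma)$, so $2k \mid \deg g$. For the reverse direction I would use that $\alpha \in \SP(2k,q)$ (or whichever classical group is relevant — here the point is only that $\alpha$ preserves a nondegenerate form since its characteristic polynomial $f=C_f$ is self-reciprocal, hence $\alpha$ is conjugate into a symplectic/orthogonal group): the eigenvalues of $\alpha$ are closed under $x \mapsto x^{-1}$, because $f$ is self-reciprocal. Now I claim $\gamma^{q^k+1}=\pm1$ is too weak; instead the cleanest route is: the set of eigenvalues of $\alpha$ lying in $\fqn_{q^{2k}}$ is all of them (as $f$ is irreducible of degree $2k$), they form a single Galois orbit under $x \mapsto x^q$, and raising to the $M$-th power sends the Galois orbit of $\gamma$ into the Galois orbit of $\beta$. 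Since $x\mapsto x^M$ commutes with the Frobenius $x \mapsto x^q$, the orbit of $\gamma$ has size a multiple of the orbit of $\beta=\gamma^M$, which is $2k$; combined with $2k \mid \deg g = |\text{orbit of }\gamma|$ we must rule out $\deg g > 2k$. This is the step I expect to be the main obstacle: a priori $\gamma$ could generate a strictly larger field. The resolution should come from Lemma \ref{001}(2): any $M$-th root $\gamma$ of $\beta$ satisfies $\gamma^{M(q^k+1)}=1$, so $\gamma$ has finite multiplicative order dividing $M(q^k+1)$; one then shows $\gamma \in \fqn_{q^{2k}}$ by a counting/divisibility argument on orders (using $(M,q)=1$), which forces $\deg g = 2k$ exactly. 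Alternatively, and perhaps more honestly, one picks $\gamma$ judiciously: among the possibly several $M$-th roots of $\beta$ in $\overline{\fq}$, at least one lies in $\fqn_{q^{2k}}$ precisely when... — so I would instead invoke the surjectivity statement implicit in $\alpha^M=C_f$, namely that $C_f$ has an $M$-th root at all, to guarantee the eigenvalue $\gamma$ we need already sits in $\fqn_{q^{2k}}$ because $\alpha$ does.

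Granting $\deg g = 2k$, it remains to show $g$ is self-reciprocal. Since the eigenvalues of $\alpha$ are stable under inversion and $\gamma$ is an eigenvalue, $\gamma^{-1}$ is also an eigenvalue of $\alpha$, hence a root of $g$ (as $g$ is the minimal polynomial of the eigenvalue $\gamma$ and $g$ is irreducible of degree $2k$ = degree of the whole eigenvalue orbit... ). More carefully: $\gamma^{-M} = \beta^{-1}$, and $\beta^{-1}$ is a root of $f = f^*$, so $\gamma^{-1}$ is an $M$-th root of a root of $f$ and $\mathrm{min}_{\fq}(\gamma^{-1}) = g^*$ has degree $2k$; I must check $g^* = g$ rather than $g^*$ being a different degree-$2k$ factor. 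If $g \neq g^*$ then $\gamma^{-1} \notin \fq(\gamma) = \fqn_{q^{2k}}$, contradicting that $\gamma^{-1} \in \fqn_{q^{2k}}$ because $\gamma \in \fqn_{q^{2k}}$ and fields are closed under inverses. Hence $g = g^*$, so $g$ is SRIM of degree $2k$, and $g \mid f(x^M)$ since $g(\gamma)=0$ and $f(\gamma^M)=f(\beta)=0$. This $g$ is the desired SRIM factor of $f(x^M)$, completing the proof.
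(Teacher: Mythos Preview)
Your proposal has the right overall shape but contains one genuine error and one unnecessary detour.

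\textbf{The error.} In your ``more careful'' argument for $g=g^*$ you assert that $g\neq g^*$ would force $\gamma^{-1}\notin\fq(\gamma)$. This is false: inverses always lie in the same field, regardless of whether $\gamma^{-1}$ is a Galois conjugate of $\gamma$. (Any $\gamma\in\fqn_{q^{2k}}^\times$ with $\gamma^{q^k+1}\neq 1$ gives a counterexample.) Your \emph{earlier}, more casual argument is the correct one: once you know $\deg g=2k$, the characteristic polynomial of the $2k\times 2k$ matrix $\alpha$ is forced to equal $g$; since $\alpha\in\SP(2k,q)$ its characteristic polynomial is self-reciprocal, so $\gamma^{-1}$ is a root of $g$ and $g=g^*$. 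Note also that the reason the eigenvalues of $\alpha$ are closed under inversion is that $\alpha$ is symplectic, not that $f$ is self-reciprocal (the latter concerns $C_f=\alpha^M$, not $\alpha$). There is a related slip earlier where you write ``the companion matrix $\alpha$ has characteristic polynomial $f$''; it is $C_f=\alpha^M$ that has characteristic polynomial $f$, not $\alpha$.

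\textbf{The detour.} You treat $\deg g\leq 2k$ as the main obstacle and try order/divisibility arguments. It is in fact immediate: $\gamma$ is an eigenvalue of a $2k\times 2k$ matrix over $\fq$, hence a root of its degree-$2k$ characteristic polynomial, so $g=\mathrm{min}_{\fq}(\gamma)$ divides that polynomial and $\deg g\leq 2k$. Combined with $2k\mid\deg g$ (from $\fq(\gamma)\supseteq\fq(\beta)=\fqn_{q^{2k}}$) you are done.

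\textbf{Comparison with the paper.} The paper avoids eigenvalue bookkeeping entirely by taking $g$ to be the minimal polynomial of the \emph{matrix} $\alpha$. Then $g\mid f(x^M)$ because $f(\alpha^M)=f(C_f)=0$; $g$ is self-reciprocal because $\alpha\in\SP(2k,q)$; and $g$ is irreducible because a nontrivial factorisation of $g$ would give a nontrivial $\alpha$-invariant, hence $C_f$-invariant, decomposition of the space, contradicting the irreducibility of $f$. The degree is automatic since $g$ is then the characteristic polynomial of $\alpha$. Your route, once repaired as above, amounts to the same thing viewed through a single eigenvalue, but the matrix formulation is cleaner and avoids the pitfalls you ran into.
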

\begin{proof}
Let $f$ be a SRIM polynomial of degree $2d$ over $\fq$. Hence $f(x)=1+a_1x+a_2x^2+\cdots+a_{d-1}x^{d-1}+x^d(a_d+a_{d-1}x^{1}+a_{d-2}x^{2}+\cdots+a_1x^{d-1}+x^d).$ Then
considering $C_f\in\SP(2d,\fqn_{q^{2d}})$ we have that $C_f$ is conjugate to the matrix $\begin{pmatrix}[ccc|ccc]
\lambda_1&&&&&\\
&\ddots&&&&\\
&&\lambda_d&&&\\
\hline
&&&\lambda_d^{-1}&&\\
&&&&\ddots&\\
&&&&&\lambda_1^{-1}\\
\end{pmatrix}$, where $\{\lambda_i^{\pm 1}\}_{i=1}^d$ is the set of 
roots of $f$. Let $\alpha^M=C_f$ for some $\alpha\in\SP(2d,\fq)$. Since 
$\alpha$ is conjugate to the matrix $\begin{pmatrix}[ccc|ccc]
\alpha_1&&&&&\\
&\ddots&&&&\\
&&\alpha_d&&&\\
\hline
&&&\alpha_d^{-1}&&\\
&&&&\ddots&\\
&&&&&\alpha_1^{-1}\\
\end{pmatrix}$ in $\SP(2d,\fq^{2d})$, where $\{\alpha_i^{\pm 1}\}_{i=1}^d$ is the set of 
roots of $\textup{min}_{\fq}(\alpha)$, we have that
 $\alpha_i^M=\lambda_{j(i)}$. Without loss of generality, we may assume that 
$\alpha_i^{\epsilon M}=\lambda_i^\epsilon$, $\epsilon=\pm 1$. Hence $f(\alpha_i^{\pm M})=0$ for all $i$. 
Considering $H(x)=f(x^M)$, we see that $H(\alpha_i^{\pm 1})=0$ for 
all $i$, in particular $g=\textup{min}_{\fq}(\alpha)$ divides $H$. Since 
$\alpha\in\SP(2d,q)$, we have that $g$ is self reciprocal monic polynomial. 
If $g=g_1g_2$ for nontrivial factors $g_1,g_2$ of $g$, then 
$\textup{min}_{\fq}(\alpha^M)=f$ is not irreducible. Thus, we conclude 
that $g$ is an SRIM polynomial. 
\end{proof}
\begin{lemma}\label{005}
Let $f$ be an $SRIM$ polynomial of degree $2k$, $k\geq 1$
 and $f(x^M)$ has an $SRIM$ factor of degree $2k$. Then there exists 
$\alpha\in\SP(2k,q)$ such that $\alpha^M=C_f$.
\end{lemma}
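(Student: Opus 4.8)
The plan is to run the argument of Lemma~\ref{004} in reverse. Given that $f(x^M)$ has an SRIM factor of degree $2k$, I would call this factor $g\in\fq[x]$; since $g$ is irreducible, self-reciprocal, and of even degree $2k\geq 2$, it is a type~$1$ polynomial. By Wall's parametrization of conjugacy classes recalled above, the function $\lambda$ defined by $\lambda_g=(1)$ (the partition with a single part equal to $1$) and $\lambda_\varphi=0$ for all other $\varphi$ satisfies the four conditions characterizing conjugacy classes of $\SP(2k,q)$: indeed $\lambda(x)=0$, $\lambda_{g^*}=\lambda_g$ because $g=g^*$, condition~(3) is vacuous since $g\neq x\pm 1$, and $\sum_\varphi|\lambda_\varphi|\deg(\varphi)=1\cdot 2k=2k$. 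Let $\alpha\in\SP(2k,q)$ be a representative of this class; then $\alpha$ is regular semisimple with characteristic polynomial $g$.

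Next I would pass to eigenvalues over $\fqn_{q^{2k}}$. The eigenvalues of $\alpha$ are the $2k$ roots $\mu,\mu^q,\dots,\mu^{q^{2k-1}}$ of $g$, so those of $\alpha^M$ are $\mu^M,\mu^{Mq},\dots,\mu^{Mq^{2k-1}}$, which are precisely the Galois conjugates of $\mu^M$ over $\fq$. Because $g\mid f(x^M)$, we have $f(\mu^M)=0$; as $f$ is irreducible of degree $2k$ this forces $\textup{min}_{\fq}(\mu^M)=f$, so $\mu^M$ has exactly $2k$ distinct conjugates, namely the $2k$ roots of $f$. Hence $\alpha^M$ has $2k$ distinct eigenvalues, is regular semisimple, and has characteristic polynomial $f$; since $\alpha\in\SP(2k,q)$, this characteristic polynomial is self-reciprocal, consistently with $f$ being type~$1$.

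Finally I would invoke uniqueness of the class: a conjugacy class of $\SP(2k,q)$ whose characteristic polynomial is the single irreducible degree-$2k$ polynomial $f$ must, under Wall's parametrization, have $\lambda_f=(1)$ and nothing else, and since $f\neq x\pm1$ there is no sign to choose, so this class is unique and is the class of $C_f$. Therefore $\alpha^M$ is $\SP(2k,q)$-conjugate to $C_f$, and conjugating $\alpha$ by a suitable $h\in\SP(2k,q)$ produces an element whose $M$-th power equals $C_f$ on the nose. The one step that needs care is precisely this last uniqueness assertion: it is where it matters that $f$ (equivalently $g$) is irreducible of even degree rather than a power of $x\pm1$, so that the passage from the characteristic polynomial to the conjugacy class $C_f$ carries no partition or sign ambiguity.
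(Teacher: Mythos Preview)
Your proof is correct and follows essentially the same route as the paper's: take the SRIM factor $g$ of $f(x^M)$, consider the symplectic element with characteristic polynomial $g$ (the paper uses the companion matrix $C_g$ directly, you invoke Wall's parametrization to produce a representative), observe via the relation $f(\mu^M)=0$ for each root $\mu$ of $g$ that its $M$-th power has the $2k$ distinct roots of $f$ as eigenvalues, and conclude conjugacy to $C_f$. Your write-up is in fact a bit more careful than the paper's about the final step, spelling out via Wall's classification why the characteristic polynomial $f$ determines a \emph{unique} $\SP(2k,q)$-class (no partition or sign ambiguity since $f\neq x\pm1$), which the paper leaves implicit.
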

\begin{proof}
We aim to show that $C_g^M$ is conjugate to $C_f$, where $g$ is a SRIM factor of degree $2k$, of $f(x^M)$. This is 
equivalent to showing that the sets $A=\{\alpha_i^{ M}: 
 i=1,2,\cdots, 2k\}$ and $\Lambda=\{\lambda_i:
i=1,2,\cdots, 2k\}$ are in bijective correspondence, where 
$\{\alpha_i\}_{i=1}^{2k}$ is the set of 
roots of $g$ and $\{\gamma_i\}_{i=1}^{2k}$ is the set of 
roots of $f$. Since $f$ is separable, we have that $|\Lambda|=2k$.

Note that in $\fqn_{q^{2k}}$, we have 
$f(x)=\displaystyle\prod_{i=1}^{2k}(x-\lambda_i)$, 
$g(x)=\displaystyle\prod_{i=1}^{2k}(x-\alpha_i)$. Since $g(x)$ divides 
$f(x^M)$, we have that, for all $j$,
$0=f(\alpha_j^M)=\displaystyle\prod_{i=1}^{2k}(\alpha_j^M-
\lambda_i)$. Hence $\alpha_1^M=\lambda_i$ for 
some $i$. After some permutation, we may assume that $i=1$.
Note that if  $h$ is the characteristic polynomial of $C_g^M$, then $h(\alpha_1)=0$. 
Since minimal polynomial of $\alpha_1$ is $f$, we have that $f=h$. 
Since $f$ is separable, we have that $|A|=|\Lambda|=2k$. 
\end{proof}

\begin{corollary}\label{006}
Let $A\in\Sp$ has characteristic polynomial $f$, which is SRIM of degree
$2n$. Then $\alpha^M=A$, has a solution in $\Sp$, if and only if $f$ is  
$M^*$-power SRIM polynomial.
\end{corollary}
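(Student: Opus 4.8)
The corollary is essentially a repackaging of Lemmas \ref{004} and \ref{005}, the one extra ingredient being a reduction from an arbitrary $A\in\Sp$ with characteristic polynomial $f$ to the companion-type representative $C_f$. First I would record that, since $f$ is SRIM of degree $2n$, it is irreducible of even degree (a Type $1$ polynomial), so the characteristic and minimal polynomials of $A$ coincide, $\fq[A]\cong\fqn_{q^{2n}}$, and $A$ is regular semisimple. By the Wall parametrization of conjugacy classes of $\Sp$ recalled earlier, the datum with $\lambda_f=(1)$ and $\lambda_\varphi=0$ for $\varphi\neq f=f^*$ labels a single $\Sp$-conjugacy class, so $A$ is $\Sp$-conjugate to $C_f$. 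Since being an $M$-th power in $\Sp$ is a conjugacy-invariant property (if $A=hC_fh^{-1}$ and $\beta^M=C_f$ then $(h\beta h^{-1})^M=A$), it suffices to prove the equivalence with $C_f$ in place of $A$.

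For the ``if'' direction, suppose $f\in\Phi_M^*$, so that $f(x^M)$ has a SRIM factor of degree $2n$; then Lemma \ref{005} directly produces $\alpha\in\SP(2n,q)$ with $\alpha^M=C_f$, and transporting $\alpha$ back along the conjugation above yields an $M$-th root of $A$ in $\Sp$. For the ``only if'' direction, suppose $\alpha\in\Sp$ satisfies $\alpha^M=C_f$ (after conjugating we may assume the right-hand side is exactly $C_f$). Then Lemma \ref{004} applies verbatim and shows that $f(x^M)$ has a SRIM factor of degree $2n$, i.e.\ $f\in\Phi_M^*$. It is worth noting what is hidden inside Lemma \ref{004}: such an $\alpha$ is forced to be cyclic regular semisimple, its minimal polynomial $g$ is SRIM of degree $2n$, and its roots are $M$-th roots of the roots of $f$, which is exactly why $g$ divides $f(x^M)$; this is the commutative-algebra heart of the statement, and it also forces $\deg g=2n$ rather than a proper divisor.

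The only step that is not an immediate quotation of the two lemmas is the reduction to $C_f$, namely the fact that when the characteristic polynomial is SRIM of full degree it already determines the $\Sp$-conjugacy class. This follows from the Wall classification quoted in Section 3, so I do not expect any real obstacle; the content of the corollary lies entirely in Lemmas \ref{004} and \ref{005}, which have already been proved.
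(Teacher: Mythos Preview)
Your proposal is correct and matches the paper's approach: the paper states the corollary with no proof at all, treating it as an immediate consequence of Lemmas \ref{004} and \ref{005}. You have simply made explicit the one step the paper leaves implicit, namely that an $A\in\Sp$ with SRIM characteristic polynomial of full degree $2n$ is $\Sp$-conjugate to $C_f$ (by the Wall parametrization recalled in Section~3), so that the two lemmas apply verbatim; nothing further is needed.
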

\begin{proposition}\label{011}
Let $cs^M_{\SP}(n,q)$ denotes the number of $M$-power separable
 conjugacy classes in $\Sp$ and 
$cS^M_{\SP}(q,u)=1+\sum\limits_{m=1}^{\infty}cs^M_{\SP}(m,q)u^m$. Then 
\begin{equation}
cS^M_{\SP}(q,u)=\displaystyle\prod_{d=1}^{\infty}\left(1+u^d\right)^{N_M^*(q,2d)}\prod_{d=1}^{\infty}\left(1+u^d\right)^{R_M^*(q,2d)}.
\end{equation}
\end{proposition}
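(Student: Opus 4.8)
The plan is to build the generating function for separable $M$-power conjugacy classes in $\Sp$ directly from the parametrization of conjugacy classes recalled in Section 3. A conjugacy class of $\Sp$ is given by a function $\lambda:\Phi\to\pa^{2n}\cup\ssp_{\SP}^{2n}$ satisfying the four constraints listed after Lemma~\ref{003}; the class is \emph{separable} exactly when the characteristic polynomial is squarefree, i.e. every nonzero part of every $\lambda_\varphi$ equals $1$ and (because of the constraint $\lambda_{\varphi^*}=\lambda_\varphi$) the supporting polynomials come either as a single self-reciprocal irreducible $\phi=\phi^*$ of even degree, or as a conjugate pair $\{\phi,\phi^*\}$ with $\phi\ne\phi^*$. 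The constraint $\lambda_{x\pm1}=0$ is automatic in the separable case since $x\pm1$ has odd multiplicity issues — more precisely a separable symplectic element cannot have $\pm1$ as an eigenvalue with the trivial partition, because the associated form on that eigenspace would be symplectic of odd dimension. So a separable class is just an unordered choice of distinct type-$1$ polynomials together with an unordered choice of distinct type-$2$ pairs, with total degree $2n$.

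The second step is to decide, for each such building block, when it contributes to an $M$-power class. By Corollary~\ref{006}, if the characteristic polynomial is a single SRIM polynomial $f$ of degree $2d$, then $C_f$ has an $M$-th root in $\Sp$ iff $f$ is an $M^*$-power SRIM polynomial; there are $N_M^*(q,2d)$ such polynomials by Proposition~\ref{007}. For a type-$2$ block $f=gg^*$ of degree $2d$, by the Corollary following Lemma on type-$2$ polynomials, $C_f$ has an $M$-th root in $\Sp$ iff $g$ is an $M$-power polynomial, and the number of such conjugate pairs $\{g,g^*\}$ of total degree $2d$ is $R_M^*(q,2d)$ by the definition given in Section 4. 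The key structural fact making the product decouple is the \emph{central join}: an element whose primary decomposition is a central join of blocks is an $M$-th power iff each block is an $M$-th power (for $(M,q)=1$ one can take $M$-th roots block by block and reassemble them inside the centralizer, which is itself a product of smaller classical groups by Lemma~\ref{002}; the orthogonal/symplectic sub-blocks glued to each factor preserve the relevant forms). So an $M$-power separable class is precisely a choice of a finite set of $M^*$-power SRIM polynomials and a finite set of $M$-power type-$2$ pairs, all distinct, with degrees summing to $n$ (recall degree $2d$ block contributes $d$ to the rank).

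The third step is the bookkeeping: a generating function in $u$ tracking $\frac12(\text{total degree})=n$ where we may use each type-$1$ $M^*$-power polynomial of degree $2d$ at most once (contributing $u^d$) and each $M$-power type-$2$ pair of ``half-degree'' $d$ at most once (again $u^d$) is exactly the Euler product
\begin{equation*}
\prod_{d=1}^{\infty}\left(1+u^d\right)^{N_M^*(q,2d)}\prod_{d=1}^{\infty}\left(1+u^d\right)^{R_M^*(q,2d)},
\end{equation*}
since $(1+u^d)^{N}$ is the generating function for choosing a subset of an $N$-element set of degree-$d$ objects. Extracting the coefficient of $u^m$ gives $cs^M_{\SP}(m,q)$, and the constant term is $1$ (the empty class in rank $0$), matching the definition of $cS^M_{\SP}(q,u)$.

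The main obstacle is the central-join step: one must verify carefully that over $\Sp$ with $(M,q)=1$, the existence of an $M$-th root is genuinely multiplicative over the primary components — both that roots of the individual blocks can be chosen to lie in the appropriate symplectic/orthogonal centralizer factors (not merely in $\GL$), using the explicit centralizer structure of Lemma~\ref{002}, and that no ``hidden'' root appears using a coarser decomposition. This is where Lemmas~\ref{004} and~\ref{005} and the machinery following \cite{ta1},\cite{ta2} do the real work; once it is in place the generating-function identity is a formal consequence of the counting Propositions~\ref{007} and the definition of $R_M^*(q,2d)$.
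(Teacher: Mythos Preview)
Your proposal is correct and follows essentially the same route as the paper: identify separable symplectic classes with finite sets of distinct type-$1$ SRIM polynomials and type-$2$ pairs (with $t\pm1$ excluded), use Corollary~\ref{006} and its type-$2$ analogue to characterise when each block is an $M$-th power, and assemble the Euler product. Your justification for $\lambda_{t\pm1}=0$ via the parity of the symplectic eigenspace is a valid variant of the paper's determinant argument, and you are in fact more explicit than the paper about the central-join multiplicativity of $M$-th roots, which the paper invokes only implicitly.
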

\begin{proof}
Let $X\in\Sp$ be a separable matrix. Then $c_X(t)$ is separable and is a
product of $*$-irreducible polynomials. Since $c_X(t)$ is separable
we have that each of the factor in $c_X(t)$ occurs exactly once. Considering
the fact that $X$ has determinant $1$, any $*$-irreducible
polynomial of type $3$ must occur twice. Hence none of the polynomial
$t\pm 1$ is a factor of $c_X(t)$. Let $\Delta_X=\{(f,\lambda_f):f\in\Phi\}$. Then $\Delta_X$ represents a separable class if and only if
\begin{enumerate}
\itemindent=-13pt
\item $\lambda_{t\pm 1}=0$,
\item $\lambda_f=\lambda_{f^*}\in\{0,1\}$,
\item $\sum\limits_{f|c_X}\deg f=2n$.
\end{enumerate}
Hence using Corollary \ref{006}, we have that $X$ is an $M$-th power separable 
element if and only if
\begin{enumerate}
\itemindent=-13pt
\item for all $(f,1)\in\Delta_X$ and $f=f^*$, $f\in\Phi^*_M$,
\item for all $(f,1)\in\Delta_X$ and $f\neq f^*$, $f\in\Phi_M$.
\end{enumerate}
Thus 
$c_X(t)=\prod\limits_{i=1}^rf_i\prod\limits_{j=1}^sg_jg_j^*$, where
$f_i$ is an $M^*$-power SRIM polynomial and $g_j\neq g_j^*$ is an 
$M$-power polynomial. Considering the fact that each of the factors 
$f_i$ and $g_jg^*_j$ is of even degree, we have that
\begin{align*}
cS^M_{\SP}(q,u)
&=\displaystyle\prod_{f\in\Phi_M^*}\left(1+u^{\frac{\deg f}{2}}\right)
\prod_{g\in\Phi_M\setminus\Phi_M^*}\left(1+u^{ \deg g}\right)^\frac{1}{2}\\
&=\displaystyle\prod_{d=1}^{\infty}\left(1+u^d\right)^{N_M^*(q,2d)}\prod_{d=1}^{\infty}\left(1+u^d\right)^{R_M^*(q,2d)}.
\end{align*}
\end{proof}
\begin{theorem}\label{012}
Let $s^M_{\SP}(n,q)$ denotes the probability of an element to be
 $M$-power separable in $\Sp$ and 
$S^M_{\SP}(q,u)=1+\sum\limits_{m=1}^{\infty}s^M_{\SP}(m,q)u^m$. Then 
\begin{equation}
S^M_{\SP}(q,u)=\displaystyle\prod_{d=1}^{\infty}\left(1+\dfrac{u^d}{q^d+1}\right)^{N_M^*(q,2d)}\prod_{d=1}^{\infty}\left(1+\dfrac{u^d}{q^d-1}\right)^{R_M^*(q,2d)}.
\end{equation}
\end{theorem}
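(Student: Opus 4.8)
The plan is to pass from the combinatorial generating function $cS^M_{\SP}(q,u)$ of Proposition \ref{011} to the weighted generating function $S^M_{\SP}(q,u)$ by replacing each monomial $u^m$ coming from a conjugacy class $C$ by $u^m/|Z_{\SP}(X_C)|$, where $X_C$ is a representative of $C$ and $Z_{\SP}$ denotes the centraliser; by the orbit--stabiliser theorem, $\sum_C u^{m(C)}/|Z_{\SP}(X_C)|$ over all separable $M$-th power classes $C$ of rank $n$ is exactly $s^M_{\SP}(n,q)$, since $|C| = |\SP|/|Z_{\SP}(X_C)|$ and summing $|C|/|\SP|$ over classes in a fixed component gives the proportion of such elements. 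So the whole task is to compute the centraliser order for a separable element and check that it factors as a product over the $*$-irreducible factors of $c_X(t)$ in exactly the way that turns the product in Proposition \ref{011} into the claimed product.

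First I would recall that for a separable $X$ each $*$-irreducible factor of $c_X(t)$ appears with multiplicity one, so in the notation of Lemma \ref{002} every relevant partition $\mu_\phi$ is the single part $1$, with $m_1 = 1$ and all other $m_\mu = 0$. Then $B(\phi) = A(\phi^1)$ (the $Q$-power exponent in the definition of $B(\phi)$ vanishes because $\sum_{\mu<\nu}\mu m_\mu m_\nu = 0$ and $\sum_\mu (\mu-1) m_\mu^2 = 0$), and Lemma \ref{002} gives $|Z_{\SP}(X)| = \prod_\phi B(\phi) = \prod_\phi A(\phi^1)$. For a type $1$ factor $f = f^*$ of degree $2d$ one has $A = |\U(1, q^d)| = q^d + 1$; for a type $2$ factor $f = gg^*$ with $\deg g = d$ one has $A = |\GL(1, q^d)|^{1/2}\cdot$ — more precisely the pair $\{g, g^*\}$ contributes $|\GL(1,q^d)| = q^d - 1$ (the square-root convention in Lemma \ref{002} is arranged so that $g$ and $g^*$ together give $q^d-1$); and type $3$ factors do not occur, as already noted in the proof of Proposition \ref{011}. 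Thus $|Z_{\SP}(X)| = \prod_{i}(q^{\deg f_i /2}+1)\prod_j (q^{\deg g_j}-1)$ in the notation of that proof.

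Next I would insert these centraliser orders into the Euler-product bookkeeping. In Proposition \ref{011} each $M^*$-power SRIM polynomial $f$ of degree $2d$ contributed a factor $(1 + u^d)$, i.e. the choice ``include $f$ or not''; now including $f$ contributes $u^d/(q^d+1)$ instead of $u^d$, giving the local factor $\bigl(1 + \tfrac{u^d}{q^d+1}\bigr)$, and there are $N_M^*(q,2d)$ such polynomials. Likewise each pair $\{g,g^*\}$ with $g$ an $M$-power polynomial of degree $d$ contributed $(1+u^d)$ and now contributes $\bigl(1 + \tfrac{u^d}{q^d-1}\bigr)$, and there are $R_M^*(q,2d)$ such pairs. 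Since the characteristic polynomial of a separable $X$ is an unordered product of distinct such building blocks and the centraliser order is multiplicative over them, the generating function is the product of the local Euler factors, yielding
\begin{equation*}
S^M_{\SP}(q,u) = \prod_{d=1}^{\infty}\left(1 + \frac{u^d}{q^d+1}\right)^{N_M^*(q,2d)} \prod_{d=1}^{\infty}\left(1 + \frac{u^d}{q^d-1}\right)^{R_M^*(q,2d)}.
\end{equation*}

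The main obstacle I anticipate is purely bookkeeping rather than conceptual: getting the degree-versus-half-degree conventions and the square-root normalisation in Lemma \ref{002} exactly right, so that a type $1$ block of characteristic-polynomial degree $2d$ really pairs with the factor $(q^d+1)^{-1}$ and a type $2$ pair $\{g,g^*\}$ of total degree $2d$ really pairs with $(q^d-1)^{-1}$; this is the same indexing subtlety that already appears in Proposition \ref{011} and simply has to be tracked consistently through the substitution $u^m \rightsquigarrow u^m/|Z_{\SP}(X)|$. Once the centraliser computation for separable elements is pinned down, the passage to the probabilistic generating function is immediate from the orbit--stabiliser theorem and the multiplicativity of both the exponent $m = \sum \deg$ and the centraliser order over $*$-irreducible factors.
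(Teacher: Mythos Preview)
Your proposal is correct and follows essentially the same approach as the paper: compute the centraliser order of a separable element block by block using Lemma~\ref{002} (yielding $q^d+1$ for a type~$1$ factor of degree $2d$ and $q^d-1$ for a type~$2$ pair of total degree $2d$), invoke multiplicativity of the centraliser over the primary decomposition, and then weight each term in the Euler product of Proposition~\ref{011} by the reciprocal centraliser size via orbit--stabiliser. Your write-up is in fact more explicit than the paper's about the orbit--stabiliser reduction and the vanishing of the $Q$-power exponent in $B(\phi)$, but the argument is the same.
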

\begin{proof}
From Lemmas \ref{002} and \ref{003}, it follows that
\begin{enumerate}
\itemindent=-13pt
\item for $X\in\Sp$, if $c_X(t)$ is a SRIM polynomial then the centraliser
of $X$ inside $\Sp$ is of order $q^n+1$,
\item for $X\in\Sp$, if $c_X(t)$ is $*$-irreducible polynomial of type $2$
 then the centraliser
of $X$ inside $\Sp$ is of order $q^n-1$.
\end{enumerate}
Hence using Proposition \ref{011} and the fact that centraliser of a general block diagonal 
matrix is a direct sum of each of the corresponding centralisers, we have 
\begin{equation*}
S^M_{\SP}(q,u)=\displaystyle\prod_{d=1}^{\infty}\left(1+\dfrac{u^d}{q^d+1}\right)^{N_M^*(q,2d)}\prod_{d=1}^{\infty}\left(1+\dfrac{u^d}{q^d-1}\right)^{R_M^*(q,2d)}.
\end{equation*}
\end{proof}
The next theorem is proved along the same line of proof of Theorem $2.3.1$ of \cite{fnp}.
\begin{theorem}\label{025}
Let $s_{\On^\e}^M(n,q)$ denotes the probability of an element to be 
$M$-power separable in $\On^\e(2n,q)$ with $\e\in\{\pm\}$ and
$s_{\On^0}^M(n,q)$ denotes the probability of an element to be 
$M$-power separable in $\On^0(2n+1,q)$. Define 
\begin{align*}
S_{\On^+}^M(q,u)&=1+\sum\limits_{m\geq 1}^\infty
s_{\On^+}^M(m,q)u^m \\  
S_{\On^-}^M(q,u)&=\sum\limits_{m\geq 1}^\infty
s_{\On^-}^M(m,q)u^m \\
S_{\On^0}^M(q,u)&=1+\sum\limits_{m\geq 1}^\infty
s_{\On^0}^M(m,q)u^m.
\end{align*} 
Then
\begin{align}
&S_{\On^+}^M(u^2)+S_{\On^-}^M(u^2)
+2uS_{\On^0}^M(u^2)=(1+u)^{o(M,q)}S_{\SP}^M(u^2),\\
& S_{\On^+}^M(u^2)-S_{\On^-}^M(u^2)=X^M_{\On^0}(u^2),
\end{align}
where
\begin{align*}
X^M_{\On^0}(q,u)=\displaystyle\prod_{d=1}^{\infty}\left
(1-\dfrac{u^d}{q^d+1}\right)^{N_M^*(q,2d)}\prod_{d=1}^{\infty}\left(1+\dfrac{u^d}{q^d-1}\right)^{R_M^*(q,2d)},
\end{align*}
where 
\begin{align*}
o(M,q)=\begin{cases}
1 & \text{if } M \text{ even }\\
2 &\text{otherwise}
\end{cases}.
\end{align*}
\end{theorem}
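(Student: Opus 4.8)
The proof proceeds by relating the separable generating functions of the orthogonal groups $\On^+$, $\On^-$, $\On^0$ to that of $\SP$, following the cycle-index identity in \cite[Theorem 2.3.1]{fnp}. The starting point is the observation that for $q$ odd, an element of a $2n$-dimensional orthogonal space corresponds to the same combinatorial data (self-reciprocal characteristic polynomial with multiplicity-one factors, none equal to $t\pm1$) as a separable symplectic element, \emph{except} that the ambient form can be of type $+$ or $-$, and the two odd-dimensional pieces $\On^0(2n+1,q)$ carry the extra factor $(t-1)$ or $(t+1)$. Concretely, a separable orthogonal element of even-dimensional type decomposes as an orthogonal direct sum of blocks attached to type $1$ (SRIM) and type $2$ ($gg^*$) polynomials, and the type $\e$ of the whole space is the product of the types of the constituent blocks; crucially, a block attached to a SRIM polynomial $f$ of degree $2d$ always has type $+$ (its orthogonal space is a hyperbolic-type module), while for $\On^0$ one must additionally attach a $1$-dimensional piece corresponding to $t\mp1$, which contributes the $(1+u)^{o(M,q)}$ factor — here $o(M,q)$ records whether $\pm1$ themselves are $M$-th powers in the relevant orthogonal group of rank one, i.e.\ whether the degenerate scalar blocks can occur in an $M$-th root.

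\emph{Step 1.} I would first set up the ``mixed'' generating function $S_{\On^+}^M(u^2)+S_{\On^-}^M(u^2)+2uS_{\On^0}^M(u^2)$, the substitution $u\mapsto u^2$ absorbing the fact that symplectic blocks have even degree while we are now tracking the orthogonal dimension directly. Using the conjugacy-class parametrization from Section 3 together with Corollary \ref{006} (a SRIM block is an $M$-th power iff its polynomial is $M^*$-power SRIM) and the type-2 analogue (the Corollary following Lemma on type $2$ polynomials), the Euler-product form of this mixed generating function factors over $M^*$-power SRIM polynomials and over pairs $\{g,g^*\}$ of $M$-power polynomials, exactly as in Proposition \ref{011}/Theorem \ref{012}, with centralizer orders $q^d+1$ and $q^d-1$ respectively — yielding the product $S_{\SP}^M(u^2)$ — times a single extra local factor at the places $t=1$ and $t=-1$. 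That extra factor is $(1+u)$ raised to the number of signs $\pm$ for which the scalar block is realizable as an $M$-th power; this is $2$ when $M$ is odd (both $+1$ and $-1$ have $M$-th roots in $\On^0(1,q)$) and $1$ when $M$ is even (only the piece giving an $M$-th root survives), which is the definition of $o(M,q)$.

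\emph{Step 2.} For the second identity, $S_{\On^+}^M(u^2)-S_{\On^-}^M(u^2)=X_{\On^0}^M(u^2)$, I would exploit the sign-of-type multiplicativity: taking the difference $S_{\On^+}-S_{\On^-}$ replaces each local Euler factor $(1+\text{contribution})$ by $(1-\text{contribution})$ precisely at those blocks whose type is $-$, and leaves the others unchanged. Since SRIM blocks and the relevant $\On^0$ corrections are the only ones that can flip the global sign (type $2$ blocks $gg^*$ always sit inside a hyperbolic space, hence contribute type $+$), the difference generating function has the SRIM local factors $(1-u^d/(q^d+1))^{N_M^*(q,2d)}$ while retaining $(1+u^d/(q^d-1))^{R_M^*(q,2d)}$ for the type $2$ factors — which is exactly the stated $X_{\On^0}^M(q,u)$. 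The odd-dimensional contributions cancel in the difference, so there is no $(1\pm u)$ prefactor here.

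\emph{Main obstacle.} The delicate point, and where I expect the real work to lie, is the precise bookkeeping of the type of each orthogonal block and of the scalar $(t\mp1)$-pieces: one must verify that a SRIM block attached to an $M^*$-power polynomial can be fitted into an orthogonal space of type $+$ (and only $+$), determine for $\On^0$ exactly which of the two one-dimensional blocks $C_{t-1}$, $C_{t+1}$ admits an $M$-th root inside the rank-one orthogonal group, and confirm that these realizability conditions are independent of $q$ beyond the parity of $M$ — so that the single exponent $o(M,q)$ correctly captures them. This requires invoking the explicit class representatives of \cite{ta1,ta2,glo} and the centralizer formulas of Lemmas \ref{002} and \ref{003} (in particular the $\On^{\e'}$ appearing for odd multiplicity at $\varphi=x\pm1$), and then checking that the resulting local factor at $t=\pm1$ really is $(1+u)^{o(M,q)}$; everything else is a formal manipulation of Euler products identical in spirit to the separable symplectic case already established in Theorem \ref{012}.
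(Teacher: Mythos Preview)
Your approach is essentially the paper's: build the mixed orthogonal generating function as an Euler product over type~1 and type~2 polynomials exactly as in Theorem~\ref{012}, attach an extra local factor at $t\pm 1$ to produce the $(1+u)^{o(M,q)}$ term, and then obtain the difference $S_{\On^+}^M-S_{\On^-}^M$ by weighting each local factor by the type $\tau_f$ of the corresponding primary component. The paper carries this out by writing $(1+u)=(1+\tfrac{u}{2}+\tfrac{u}{2})$ for the two one-dimensional orthogonal classes at $t-1$ (centralizer of order~$2$), and for the second equation by the substitution $1+C_fu^{2d}\mapsto 1+\tau_f C_fu^{2d}$.

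There is one factual slip you must correct before executing the plan. In your Plan and again in the ``Main obstacle'' paragraph you state that a block attached to a SRIM (type~1) polynomial of degree $2d$ sits in an orthogonal space of type~$+$. This is backwards: for $f$ SRIM one has $\tau_f=-1$, i.e.\ the $f$-primary component lives in $\On^-(2d,q)$, while the type~2 blocks $gg^*$ are the hyperbolic ones with $\tau_f=+1$. You actually use the correct sign in Step~2 (``SRIM blocks \dots\ flip the global sign''), which is precisely why the SRIM local factor becomes $\bigl(1-\frac{u^d}{q^d+1}\bigr)$ in $X^M_{\On^0}$; so the inconsistency is between your Plan and your Step~2, and it is Step~2 that is right. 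Likewise, in the difference the $(1+u)$ prefactor vanishes not because ``odd-dimensional contributions cancel'' in a vague sense, but because the two one-dimensional classes at each of $t\pm 1$ have opposite $\tau$-values and hence their $\tfrac{u}{2}$ contributions cancel pairwise. With these sign assignments fixed, your outline matches the paper's proof.
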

\begin{proof}
The proof is similar to that of the previous theorem. But if $X$ is a separable orthogonal
matrix, then $t\pm 1$ can divide $c_X(t)$. The multiplicity of $t\pm 1$ 
in $c_X(t)$ can be at most $1$, because $c_X(t)$ is separable.
Since center of $\On^\e(m,q)$ is $\{\pm\mathds{1}\}$, we have that
the block corresponding to $t+1$, of size $1\times 1$ is an $M$-th power
if and only if $M$ is odd.
Now suppose $M$ is even. Consider the product 
\begin{align*}
(1+u)\displaystyle\prod_{d=1}^{\infty}\left
(1+\dfrac{u^d}{q^d+1}\right)^{N_M^*(q,2d)}\prod_{d=1}^{\infty}\left(1+\dfrac{u^d}{q^d-1}\right)^{R_M^*(q,2d)}.
\end{align*}
For the case $M$ being even,
write $(1+u)$ as $(1+\frac{u}{2}+\frac{u}{2})$ and this
tracks the possibility of 
$t-1$ dividing $c_X(t)$. Each term $\frac{u}{2}$, appears for the distinct 
conjugacy classes corresponding to $t-1$, each having order of centraliser
$2$. Note that in this case $-1$ is not an $M$-th power. Hence 
$e(M,q)=1$. Now for $n$ even positive, the coefficient of $u^n$ is
 $s_{\On^+}^M(n,q)+s_{\On^-}^M(n,q)$, where as for $n$ being 
odd positive the coefficient is $e(q)s_{\On^0}^M(n,q)$ for $e(q)$ many 
types of forms over $\fq^n$. 

For the case $M$ being odd and $q$ being odd, consider the product
\begin{align*}
(1+u)^2\displaystyle\prod_{d=1}^{\infty}\left
(1+\dfrac{u^d}{q^d+1}\right)^{N_M^*(q,2d)}\prod_{d=1}^{\infty}\left(1+\dfrac{u^d}{q^d-1}\right)^{R_M^*(q,2d)}.
\end{align*}
Then writing $(1+u)$ as $(1+\frac{u}{2}+\frac{u}{2})$ we get the
possibility of $t-1$ dividing $c_X(t)$. But there are two such conjugacy
classes each having centraliser of size $2$. The same argument applies 
for the polynomial $t+1$ as well. Hence the power $2$. This proves the first equation.

We will prove the second equation by modifying first equation. For each 
of the factor $1+C_f u^{2d}$ in the right hand side of the first equation,
 where $C_f$ is the reciprocal of the size
of the corresponding centraliser, replace it by $1+\tau_f C_f 
u^{2d} $, where $\tau_f=\tau(V_f)$ and $V_f$ denotes the
component of $V$ corresponding to $f$, in the primary 
decomposition as an $\fq[X]$ module. Then since for $q$ odd, each of 
the term $\frac{u}{2}$ corresponds to conjugacy class with 
$\tau_f$ values being negative to each other, the term $(1+u)$ vanishes. 
Now, since $\tau_f=-1$, when $f$ is of type $1$ and 
$\tau_f=+1$, when $f$ is of type $2$, the factors $1+\dfrac{u^{2d}}{q^d+1}$
are replaced by $1-\dfrac{u^{2d}}{q^d+1}$, whereas the factor
$1+\dfrac{u^{2d}}{q^d-1}$ remains as it is. Hence the product 
becomes $\displaystyle\prod_{d=1}^{\infty}\left
(1-\dfrac{u^d}{q^d+1}\right)^{N_M^*(q,2d)}\prod_{d=1}^{\infty}\left(1+\dfrac{u^d}{q^d-1}\right)^{R_M^*(q,2d)}$, which on expanding gives
$S_{\On^+}^M(u^2)-S_{\On^-}^M(u^2)$.
\end{proof}
\begin{remark}\label{027}
Let $cs_{\On^\e}^M(n,q)$ denotes the probability of a conjugacy class
 to be 
$M$-power separable in $\On^\e(2n,q)$ with $\e\in\{\pm\}$ and
$cs_{\On^0}^M(n,q)$ denotes the probability of a conjugacy class
 to be 
$M$-power separable in $\On^0(2n+1,q)$. Define 
\begin{align*}
cS_{\On^+}^M(q,u)&=1+\sum\limits_{m\geq 1}^\infty
cs_{\On^+}^M(m,q)u^m, \\  
cS_{\On^-}^M(q,u)&=\sum\limits_{m\geq 1}^\infty
cs_{\On^-}^M(m,q)u^m, \\
cS_{\On^0}^M(q,u)&=1+\sum\limits_{m\geq 1}^\infty
cs_{\On^0}^M(m,q)u^m.
\end{align*} 
Then
\begin{align}
&cS_{\On^+}^M(u^2)+cS_{\On^-}^M(u^2)
+2ucS_{\On^0}^M(u^2)=(1+2u)^{o(M,q)}cS_{\SP}^M(u^2),\\
& cS_{\On^+}^M(u^2)-cS_{\On^-}^M(u^2)=cX^M_{\On^0}(u^2),
\end{align}
where
\begin{align*}
cX^M_{\On^0}(q,u)=\displaystyle\prod_{d=1}^{\infty}\left
(1-u^d\right)^{N_M^*(q,2d)}
\prod_{d=1}^{\infty}\left(1+u^d\right)^{R_M^*(q,2d)}.
\end{align*}

\end{remark}

\section{Generating Functions for Semisimple  Matrices}

Before moving towards the determination of generating functions for
semisimple case, we find out the cases where $M$-th root of 
$-\mathds{1}$ exists. This is certainly true, whenever $M$ is odd. The next lemma discusses the scenario, when $M$ is even. 

We first note that for a number $M=2^nM'$ for $(M',2)=1$, we have
\begin{align*}
x^M+1=(x^{2^n}+1)(x^{2^n(M'-1)}-x^{2^n(M'-2)}+\cdots+1).
\end{align*}
Let us call the second factor in the above decomposition $F$. If we can prove that all irreducible factors of $F$ have degree which is multiple of degree of the factors of $x^{2^n}+1$, we can conclude about factors of $x^M+1$. We note the following result from \cite{m}.
\begin{proposition}[Theorem $1$, \cite{m}]
Let $q\equiv3\pmod{4}$, i.e., $q= 2^Am - 1$, $A  \geq2$, $m$ odd. Let $n\geq2$.
\begin{enumerate} 
\itemindent=-13pt
\item If $n < A$, then $x^{2^n}+1$ is the product of $2n-1$ irreducible trinomials
over $\fq$
\begin{align*}
x^{2^n}+1=\prod\limits_{\gamma\in\Gamma}(x^2+\gamma x+1),
\end{align*}
where $\Gamma$ is the set of all roots of the Dickson polynomial $D_{2^{n-1}} (x, 1)$.
\item If $n \geq A$, then $x^{2^n}+1$ is the product of $2^{A-1}$ irreducible polynomials
over $\fq$
\begin{align*}
x^{2^n}+1=\prod\limits_{\delta\in\Delta}(x^{2^{n-A+1}}+\delta x^{2^{n-A}}-1),
\end{align*}
where $\Delta$ is the set of all roots of the Dickson polynomial $D_{2^{A-1}} (x,-1)$.
\end{enumerate}
\end{proposition}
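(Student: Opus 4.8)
The plan is to first pin down the degrees of the irreducible factors of $x^{2^n}+1$ and their number, and then to identify the factors explicitly via the substitution underlying Dickson polynomials. Since $q$ is odd, $x^{2^n}+1$ is separable, and its roots are exactly the primitive $2^{n+1}$-th roots of unity; hence it is a product of $2^n/d$ distinct monic irreducibles of degree $d:=\mathrm{ord}_{2^{n+1}}(q)$, the multiplicative order of $q$ in $(\mathbb{Z}/2^{n+1}\mathbb{Z})^\times$. The hypothesis $q=2^A m-1$ with $m$ odd gives $v_2(q-1)=1$ (because $q\equiv 3\pmod 4$) and $v_2(q+1)=A$, so the lifting‑the‑exponent formula yields $v_2(q^{2^j}-1)=A+j$ for all $j\ge 1$. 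Therefore $q^{2^j}\equiv 1\pmod{2^{n+1}}$ exactly when $j\ge n+1-A$: if $n<A$ then already $q\equiv-1\pmod{2^{n+1}}$ and $d=2$, giving $2^{n-1}$ irreducible factors; if $n\ge A$ then $d=2^{n-A+1}$, giving $2^n/2^{n-A+1}=2^{A-1}$ factors. These are the counts in (1) (reading its ``$2n-1$'' as $2^{n-1}$) and (2), and in the first case no linear factor can occur since a root in $\fq$ would force $2^{n+1}\mid q-1$, which is impossible.

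To exhibit the factors I would pass to the splitting field and use $D_k(y+y^{-1},1)=y^k+y^{-k}$ and $D_k(y-y^{-1},-1)=y^k+(-1)^k y^{-k}$, together with the parity $D_k(-x,a)=(-1)^k D_k(x,a)$. \emph{Case $n<A$:} let $\zeta$ be a primitive $2^{n+1}$-th root of unity; then $\zeta^q=\zeta^{-1}$, so $\gamma:=-(\zeta+\zeta^{-1})\in\fq$, the quadratic $x^2+\gamma x+1=(x-\zeta)(x-\zeta^{-1})$ divides $x^{2^n}+1$, and $D_{2^{n-1}}(-\gamma,1)=\zeta^{2^{n-1}}+\zeta^{-2^{n-1}}=0$ because $\zeta^{2^{n-1}}$ is a primitive $4$-th root of unity; since $2^{n-1}$ is even this also gives $\gamma\in\Gamma$. \emph{Case $n\ge A$:} put $\xi:=\zeta^{2^{n-A}}$, a primitive $2^{A+1}$-th root of unity; then $q\equiv 2^A-1\pmod{2^{A+1}}$ gives $\xi^q=-\xi^{-1}$, so $\delta:=\xi-\xi^{-1}\in\fq$ is a root of $D_{2^{A-1}}(x,-1)$ (again $\xi^{2^{A-1}}$ is a primitive $4$-th root). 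The identity $(\xi-\xi^{-1})^2+4=(\xi+\xi^{-1})^2$ shows the roots of $w^2+\delta w-1$ are $\xi^{-1}$ and $-\xi$, both $2^{A+1}$-th roots of unity with $(\,\cdot\,)^{2^A}=-1$; hence any root $z$ of $p_\delta(x):=x^{2^{n-A+1}}+\delta x^{2^{n-A}}-1$ satisfies $z^{2^{n+1}}=1$ and $z^{2^n}=-1$, so $p_\delta\mid x^{2^n}+1$.

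It remains to check that these families exhaust the factorization and consist of irreducible polynomials. The roots of $D_{2^{n-1}}(x,1)$ (resp.\ $D_{2^{A-1}}(x,-1)$) are precisely the values $y+y^{-1}$ (resp.\ $y-y^{-1}$) as $y$ ranges over the $2^n$ primitive $2^{n+1}$-th roots of unity (resp.\ the $2^A$ primitive $2^{A+1}$-th roots of unity), and the relevant map is two‑to‑one ($y\leftrightarrow y^{-1}$, resp.\ $y\leftrightarrow -y^{-1}$), so $|\Gamma|=2^{n-1}$ and $|\Delta|=2^{A-1}$ and every root of the Dickson polynomial arises; the same computation shows these roots all lie in $\fq$. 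Distinct members of each family are coprime, and the total degrees $2^{n-1}\cdot 2$ and $2^{A-1}\cdot 2^{n-A+1}$ both equal $2^n$, so the product over each family is $x^{2^n}+1$; since we already know $x^{2^n}+1$ has exactly $2^{n-1}$ (resp.\ $2^{A-1}$) monic irreducible factors, each member of the family must be irreducible. The main obstacle is precisely this final bookkeeping — matching the abstract count $2^n/d$ coming from $\mathrm{ord}_{2^{n+1}}(q)$ with the count of Dickson roots — together with the elementary but essential verifications that $\gamma,\delta\in\fq$ (equivalently that these Dickson polynomials split over $\fq$) and the quadratic identities converting ``root of $p_\delta$'' into ``$2^{n+1}$-th root of unity''.
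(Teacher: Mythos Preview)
The paper does not prove this proposition at all: it is quoted verbatim as Theorem~1 of \cite{m} and used as a black box, so there is no ``paper's proof'' to compare against. Your argument, however, stands on its own and is correct. The computation of $d=\mathrm{ord}_{2^{n+1}}(q)$ via $v_2(q-1)=1$, $v_2(q+1)=A$ and the inductive factorisation $q^{2^j}-1=(q^{2^{j-1}}-1)(q^{2^{j-1}}+1)$ is sound and gives exactly the right factor counts $2^{n-1}$ (for $n<A$) and $2^{A-1}$ (for $n\ge A$); you are also right to flag that the ``$2n-1$'' in the displayed statement is a misprint for $2^{n-1}$.

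The explicit identification of the factors via the Dickson substitutions $D_k(y+y^{-1},1)=y^k+y^{-k}$ and $D_k(y-y^{-1},-1)=y^k+(-1)^k y^{-k}$ is carried out cleanly. The key verifications --- that $\zeta^q=\zeta^{-1}$ in case~(1) and $\xi^q=-\xi^{-1}$ in case~(2) (from $q\equiv 2^A-1\pmod{2^{A+1}}$), hence $\gamma,\delta\in\fq$; that the quadratic $w^2+\delta w-1$ has roots $\xi^{-1},-\xi$, both of exact order $2^{A+1}$; and that the maps $y\mapsto y+y^{-1}$ and $y\mapsto y-y^{-1}$ are two-to-one onto the Dickson root sets of the correct sizes --- are all correct. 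The final degree-and-coprimality count then forces irreducibility of each displayed factor, exactly as you say. This is essentially the argument in Meyn's original paper, reorganised so that the order computation comes first and the irreducibility is deduced at the end rather than checked directly.
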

Suppose $M=2^n$, for some $n$. Then the block $\begin{bmatrix}
-1&0\\0&-1
\end{bmatrix}$ is an $M$-th root in the first case. For the second case the $L\times L$ matrix $-\mathds{1}_L$ is an $M$-th power if and only if $L$ is a multiple of $2^{n-A+2}$.
\begin{proposition}\label{014}
Let $css^M_{\SP}(2n,q)$ denotes the number of $M$-power semisimple
 conjugacy classes in $\Sp$ and 
$cSS^M_{\SP}(q,u)=1+\sum\limits_{m=1}^{\infty}css^M_{\SP}(2m,q)u^{m}$. Then $cSS^M_{\SP}(q,u)$ is given by 
\begin{align*}
&\dfrac{1}{(1-u^{r(M,q)})(1-u)}
\displaystyle
\prod_{d=1}^{\infty}(1-u^d)^{-N_M^*(q,2d)}
\prod_{d=1}^{\infty}(1-u^d)^{-R_M^*(q,2d)}\\
&\times\prod\limits_{d=1}^\infty
\prod\limits_{e|q^d+1}\left(1+\sum\limits_{m=1}^\infty\mathcal{I}_{e,2d}(2dm)u^{dm}\right)^{N_M^{*,e}(q,2d)}
\prod\limits_{d=1}^\infty
\prod\limits_{e|q^d-1}\left(1+\sum\limits_{m=1}^\infty\mathcal{I}_{e,d}(dm)u^{dm}\right)^{R_M^{*,e}(q,2d)},
\end{align*}
where 
$r(M,q)=\begin{cases}
1&\text{if }M\text{ is odd}\\
2&q\equiv3\pmod{4},n<A\\
2^{n-A+1}&q\equiv3\pmod{4},n\geq A
\end{cases}.$
\end{proposition}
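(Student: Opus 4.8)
The plan is to mimic the structure of the separable case (Proposition~\ref{011}) but now allow each $*$-irreducible factor to appear with multiplicity, which forces us to track partitions rather than mere presence/absence. First I would decompose an arbitrary semisimple $X \in \Sp$ according to its primary decomposition: $c_X(t) = \prod_i f_i^{a_i} \prod_j (g_j g_j^*)^{b_j} \, (t-1)^{c} (t+1)^{d}$, where the $f_i$ are distinct SRIM polynomials, the $g_j \neq g_j^*$ are irreducible, and semisimplicity means the minimal polynomial is squarefree, so the Jordan data attached to each polynomial is a partition all of whose parts equal $1$ --- equivalently, the block attached to $f$ of multiplicity $a$ is simply $C_f$ repeated $a$ times (with the appropriate signed-partition bookkeeping on $t \pm 1$ coming from Lemma~\ref{002}). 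The point is that $X$ is an $M$-th power iff each primary component is, and I would handle the three kinds of component separately.

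Next I would determine, for a fixed SRIM polynomial $f$ of degree $2d$ and a fixed multiplicity $a$, when $C_f^{\oplus a}$ is an $M$-th power in $\SP(2da,q)$. If $f$ is itself an $M^*$-power SRIM polynomial, then by Corollary~\ref{006} a single block already has a root of the same type, and one can build a root of $C_f^{\oplus a}$ block-by-block; this contributes the factor $\prod_d (1-u^d)^{-N_M^*(q,2d)}$ (geometric series in the multiplicity, since there is no degree constraint). If $f$ is \emph{not} an $M^*$-power SRIM polynomial --- say $f$ has exponent $e$ --- then any would-be root contributes irreducible factors of $f(x^M)$, whose possible degrees are recorded by $\mathcal{D}_M(f)$ and $\mathcal{D}_M^*(f)$; by Lemma~\ref{034} and Remark~\ref{035} this data depends only on $(2d,e)$, and the generating function counting the ways to write the multiplicity-$a$ block as an $M$-th power is exactly $G_f(u)$ restricted to admissible exponents, i.e.\ $1 + \sum_{m\geq 1}\mathcal{I}_{e,2d}(2dm)u^{dm}$. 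Summing over the $N_M^{*,e}(q,2d)$ such polynomials of each exponent $e \mid q^d+1$ gives the third displayed product. The type-$2$ factors $g g^*$ are handled by the Corollary following the ``type $2$'' lemma: $gg^*$ raised to any multiplicity is an $M$-th power iff $g$ is an $M$-power polynomial, giving $\prod_d(1-u^d)^{-R_M^*(q,2d)}$, and the non-$M$-power type-$2$ polynomials of exponent $e \mid q^d - 1$ contribute the $\prod_e(1+\sum_m \mathcal{I}_{e,d}(dm)u^{dm})^{R_M^{*,e}(q,2d)}$ factor. Finally the polynomials $t-1$ and $t+1$: $t-1$ always contributes $(1-u)^{-1}$ since the identity block of any size is trivially an $M$-th power, while $t+1$ contributes $(1-u^{r(M,q)})^{-1}$ because, by the preceding discussion of $x^M+1$ and the cited Theorem of \cite{m}, the smallest size of a $-\mathds{1}$ block that admits an $M$-th root in $\SP$ is $r(M,q)$ (this is where the Dickson-polynomial case analysis $n<A$ versus $n\geq A$ enters). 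Multiplying the four groups of factors yields the claimed formula.

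The main obstacle I anticipate is the $t+1$ analysis: one must be careful that a root of $-\mathds{1}_L$ inside the \emph{symplectic} group (not merely in $\GL$) exists exactly when $L$ is a multiple of $r(M,q)$, and that partial blocks cannot be assembled from smaller admissible pieces in some unexpected way --- i.e.\ that $r(M,q)$ is genuinely the minimal period, so the generating function in that variable is the clean geometric series $(1-u^{r(M,q)})^{-1}$ rather than something with more terms. This requires combining the explicit factorization of $x^M+1$ over $\fq$ (via the cited proposition, splitting on $q \equiv 1$ or $3 \pmod 4$ and on $n$ versus $A$) with the symplectic-form constraint on the root, and checking that the $-\mathds{1}_L$ block carries a nondegenerate alternating form compatible with the constructed root; the odd-$M$ case is immediate since then $-\mathds{1}$ itself is an $M$-th power. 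A secondary subtlety is justifying that the indicator-function bookkeeping $\mathcal{I}_{e,2d}$ correctly enumerates \emph{all} ways of realizing a multiplicity-$a$ SRIM block as an $M$-th power and not just one of them, which follows from unwinding Definition~\ref{indicatorfunction} together with the fact that distinct irreducible polynomials have disjoint root sets, but should be stated explicitly.
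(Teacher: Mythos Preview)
Your proposal is correct and mirrors the paper's own argument: decompose a semisimple $X$ into its primary components, treat the $t\pm 1$ blocks, the $M^*$-power SRIM factors, the non-$M^*$-power SRIM factors (via the indicator function), and the type-$2$ factors separately, and multiply the resulting generating functions. Your worry in the last paragraph is slightly misplaced --- by Definition~\ref{indicatorfunction} the indicator $\mathcal{I}_{e,2d}$ is a $\{0,1\}$-valued flag for existence rather than a count of realizations, which is exactly what is required when enumerating conjugacy classes rather than $M$-th roots.
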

\begin{proof}
Let $X\in\Sp$ be semisimple. Then $m_X(t)$ is a product of distinct $*$-irreducible
polynomials. Considering that $X$ has determinant $1$, we have that $(t+1)$ has even multiplicity in $c_X(t)$. 
This forces to have multiplicity of $t-1$ to be even in $c_X(t)$. Let $\Delta_X=\{(f,\lambda_f):f\in\Phi\}$. 
Then $X$ is semisimple if and only if
\begin{enumerate}
\itemindent=-13pt
\item $\lambda_{t+1}\in\{0,1^{2r_1}\}$, 
$\lambda_{t-1}\in\{0,1^{2r_{-1}}\}$, 
\item $\lambda_f=\lambda_{f^*}\in\{0,1^{l_f}\}$,
\item $\sum|\lambda_f|=2n$,
\end{enumerate}
where $r_1,r_{-1},l_f\in\mathbb{Z}_{>0}$. Hence using Corollary \ref{006} and 
discussion preceding Lemma \ref{033}, $X$ is an $M$-th power if and only if
\begin{enumerate}
\itemindent=-13pt
\item $\lambda_{t-1}\in\{0,1^{2r_1}\}$, $r_1\in\mathbb{Z}_{>0}$, 
\item $\lambda_{t+1}\in\{0,1^{2r_{-1}}\}$, where $r_{-1}\in r(M,q)\mathbb{Z}_{>0}$,
\item for $f$, an $M*$-power SRIM polynomial of degree $d$ we have 
$\lambda_f\in\{0,1^m:m\in\mathbb{Z}_{>0}\}$,
\item for $f~(\neq f^*)$, an $M$-power polynomial of degree $d$ we have
\begin{align*}
\lambda_f=\lambda_{f^*}\in\{0,1^m:m\in\mathbb{Z}_{>0}\}    
\end{align*}
\item for $f$, a type $1$ polynomial which is not an $M^*$-power polynomial,
\begin{align*}
 \lambda_f\in\{0,1^m:m\in\sum\limits_{i\in\mathcal{D}_M(f)}\mathbb{Z}_{\geq 0}i\},   
\end{align*}
\item for $f$, a type $2$ polynomial which is not an $M$-power polynomial\begin{align*}
 \lambda_f=\lambda_{f^*}\in\{0,1^m:m\in\sum\limits_{i\in\mathcal{D}_M(f)}\mathbb{Z}_{\geq 0}i\}.   
\end{align*}
\end{enumerate}
 Hence $cSS^M_{\SP}(q,u)$ is given by
\begin{align*}
&\left(1+\sum\limits_{m=1}^\infty u^m\right)
\left(1+\sum\limits_{m=1}^\infty u^{m\frac{r(M,q)}{2}}\right)^{e(q)-1}
\\
\times&\prod\limits_{\substack{f=f^*\\f\in\Phi^*_M}}
\left(1+\sum\limits_{m=1}^\infty u^{m\frac{\deg f}{2}}
\right)
\prod\limits_{
\substack{\{f\neq f^*\}\\f\in\Phi_M}
}
\left(1+\sum\limits_{m=1}^\infty u^{m\deg f}
\right)
\\
\times&\prod\limits_{\substack{f=f^*\\f\not\in\Phi^*_M}}
\prod\limits_{e|q^{\frac{\deg f}{2}}+1}\left(1+\sum\limits_{m=1}^\infty\mathcal{I}_{M}(f)(m\deg f)u^{\frac{\deg f}{2}m}\right)\\
\times&\prod\limits_{\substack{\{f,f^*\}\\f\neq f^*,f\not\in\Phi_M}}
\prod\limits_{e|q^{\deg f}-1}\left(1+\sum\limits_{m=1}^\infty\mathcal{I}_{M}(f)(m\deg f)u^{m\deg f}\right)
\end{align*}
where \begin{enumerate}
\itemindent=-13pt
\item the first term accounts for the polynomial $t-1$,
\item the second term accounts for the polynomial $t+1$, which vanishes when $(q,2)\neq 1$ and hence the power $e(q)-1$,
\item the third and fourth term appear for the polynomials in $\Phi^*_M$ and $\Phi_M$ respectively,
\item the fifth term appears for the type $1$ polynomial which are not 
$M^*$-th power SRIM. Note that in this case $f(x^M)$ has factors of degrees
belonging to $\mathcal{D}_M(f)$. Suppose $k_i\in\mathcal{D}_M(f)$ and
$g_{k_i}$ be a factor of $f(x^M)$, of degree $k_i$ with
$i=1,2,\cdots$. Then clearly $\deg f|k_i$ and $(f,1^{\frac{k_i}{\deg f}})$ is an $M$-th power for all $k_i\in\mathcal{D}_M(f)$. Then for 
any integer $m\in\sum_i\mathbb{Z}_{\geq 0}\frac{k_i}{\deg f}$, the 
class $(f,1^m)$ is an $M$-th power.

In this case two kinds of polynomials can occur in factorization of $f(x^M)$. It can be of either type $1$ or type $2$. For this the function
$G_f$ has two components corresponding to each type. 
Hence we associate 
the function $\mathcal{I}_M(f)$ which indicates if $m\in\sum_i\mathbb{Z}_{\geq 0}\frac{k_i}{\deg f}$ or not.
\item the sixth term appears for the type $2$ polynomial which are not 
$M$-th power (applying similar kind of argument as in the previous case).
\end{enumerate}
Hence plugging in the formulae for number of each kind of polynomials
and taking into consideration \ref{034} we get the result.
\end{proof}
\begin{theorem}\label{016}
Let $ss^M_{\SP}(n,q)$ denotes the probability of an element to be
 $M$-power semisimple in $\Sp$ and 
$SS^M_{\SP}(q,u)=1+\sum\limits_{m=1}^{\infty}ss^M_{\SP}(2m,q)u^{m}$. Then 
\begin{align*}SS^M_{\SP}(q,u)&=\left(1+\sum\limits_{m\geq 1}\frac{{u}^{m\frac{r(M,q)}{2}}}{|\SP(mr(M,q),\fq)|}
\right)
\left(1+\sum\limits_{m\geq 1}\frac{{u}^{m}}{|\SP(2m,\fq)|}
\right)
\displaystyle\\
\times&\prod_{d=1}^{\infty}
\left(1+\sum\limits_{m\geq 1}
\dfrac{u^{dm}}{|\U(m,\fqn_{q^d})|}\right)^{N_M^*(q,2d)}
\prod_{d=1}^{\infty}
\left(1+\sum\limits_{m\geq 1}
\dfrac{u^{dm}}{|\GL(m,\fqn_{q^d})|}\right)^{R_M^*(q,2d)}\\
\times&\prod\limits_{d=1}^\infty
\prod\limits_{e|q^d+1}\left(1+\sum\limits_{m=1}^\infty\mathcal{I}_{e,2d}(2dm)\dfrac{u^{dm}}{|\U(m,\fqn_{q^d})|}\right)^{N_M^{*,e}(q,2d)}\\
\times&\prod\limits_{d=1}^\infty
\prod\limits_{e|q^d-1}\left(1+\sum\limits_{m=1}^\infty\mathcal{I}_{e,d}(dm)\dfrac{u^{dm}}{|\GL(m,\fqn_{q^d})|}\right)^{R_M^{*,e}(q,2d)}
\end{align*}
where $r(M,q)$ is the function defined in Proposition \ref{014}.
\end{theorem}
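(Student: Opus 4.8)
The plan is to promote the conjugacy-class count of Proposition~\ref{014} to a \emph{weighted} count: attach to each $M$-power semisimple class $[X]$ of $\Sp$ the weight $1/|C_{\Sp}(X)|$, so that by the orbit--stabiliser theorem the weighted sum over such classes equals $ss^M_{\SP}(2n,q)$. Since a semisimple $X$ yields a form-orthogonal decomposition $V=\bigoplus_f V_f$ into $*$-primary components with $C_{\Sp}(X)=\prod_f C(V_f)$, the generating function is again an Euler product over the ``kinds'' of $*$-irreducible factors; the only change from Proposition~\ref{014} is that the monomial recording a given multiplicity is divided by the order of the centraliser contributed by that block.

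First I would import, verbatim from the proof of Proposition~\ref{014} (which in turn rests on Corollary~\ref{006} and the discussion preceding Lemma~\ref{033}), the description of the admissible data $\Delta_X=\{(f,\lambda_f)\}$ for an $M$-power semisimple class: $\lambda_{t-1}=1^{2r_1}$ with $r_1$ arbitrary; $\lambda_{t+1}=1^{2r_{-1}}$ with $r_{-1}$ a positive multiple of $r(M,q)$; $\lambda_f=1^m$ with $m$ arbitrary when $f$ is an $M^*$-power SRIM polynomial or $f=gg^*$ with $g$ an $M$-power polynomial; and $\lambda_f=1^m$ with $m$ ranging over exactly the exponents that occur in the series $G_f(u)$ of Definition~\ref{indicatorfunction} (equivalently those $m$ with $\mathcal{I}_M(f)(m\deg f)\neq0$) when $f$ is a non-$M^*$-power type~$1$, or a non-$M$-power type~$2$, polynomial. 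This classification is unchanged; only the weight of each block must be supplied.

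Second, I would read the block weights off Lemmas~\ref{002} and~\ref{003}. Because a semisimple element has all parts equal to $1$, the $Q$-power prefactor in $B(\phi)$ vanishes and $B(\phi)=A(\phi^1)$: a SRIM polynomial of degree $2d$ with multiplicity $m$ contributes $|\U(m,\fqn_{q^d})|$; a type~$2$ factor $gg^*$ with $\deg g=d$ and multiplicity $m$ contributes $|\GL(m,\fqn_{q^d})|$; a block of $t-1$ of multiplicity $2m$ contributes $|\SP(2m,q)|$; a block of $t+1$ of the multiplicity dictated by $r(M,q)$ contributes a symplectic group of the corresponding size. Dividing the bare multiplicity-monomials in the local factors of Proposition~\ref{014} by these orders, and leaving the exponents $N_M^*(q,2d)$, $R_M^*(q,2d)$, $N_M^{*,e}(q,2d)$, $R_M^{*,e}(q,2d)$ untouched, yields the claimed formula; as in Proposition~\ref{014}, Lemma~\ref{034} and the remark after it collapse the product over all non-$M^*$-power (resp.\ non-$M$-power) irreducibles of degree $2d$ into a product over their exponents $e\mid q^d+1$ (resp.\ $e\mid q^d-1$), with the indicators $\mathcal{I}_{e,2d}$ (resp.\ $\mathcal{I}_{e,d}$) selecting the surviving multiplicities.

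Two points need genuine care. First, for a non-$M^*$-power type~$1$ polynomial $f$ of degree $2d$ one must check that $(f,1^m)$ is an $M$-th power in $\Sp$ exactly for the $m$ recorded by $G_f$ \emph{and} that a root exists inside $\Sp$ without disturbing the centraliser order $|\U(m,\fqn_{q^d})|$; I would build such a root as a central join (in the sense of \cite{ta1},\cite{ta2}) of roots of the individual SRIM and type~$2$ irreducible factors of $f(x^M)$, whose existence is guaranteed by Lemmas~\ref{004} and~\ref{005} and the structural lemmas on type~$1$ and type~$2$ polynomials of Section~4, a block assembly that preserves semisimplicity and the $*$-primary structure (the type~$2$ case is identical with $\GL$ in place of $\U$). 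Second, the $t+1$ contribution must be pinned down in every residue class of $q$ modulo $4$ and every $2$-adic valuation of $M$, i.e.\ one must confirm that the minimal $(t+1)$-multiplicity supporting an $M$-th root is exactly what $r(M,q)$ encodes; this is precisely the content of the proposition of \cite{m} quoted above together with the accompanying observation that $-\mathds{1}_L\in\SP(L,q)$ is an $M$-th power iff $L$ is the stated multiple of a power of $2$, and with the elementary fact that $-\mathds{1}$ is an $M$-th power whenever $M$ is odd. I expect the first of these --- verifying that the central-join root lands in $\Sp$ and leaves the centraliser order untouched --- to be the main obstacle; granted it and the $t+1$ analysis, the remainder is the same bookkeeping as in Proposition~\ref{014} combined with the orbit--stabiliser reduction.
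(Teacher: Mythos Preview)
Your approach is essentially identical to the paper's: invoke Proposition~\ref{014} for the classification of $M$-power semisimple classes, read the block centralizer orders from Lemma~\ref{002} (a SRIM block of degree $2d$ with multiplicity $m$ gives $|\U(m,q^d)|$, a type~$2$ block gives $|\GL(m,q^d)|$, and the $\pm1$-blocks give symplectic groups), and use multiplicativity of the centralizer over the $*$-primary decomposition together with orbit--stabiliser. Your flagged ``main obstacle'' is not in fact an obstacle --- the centralizer order is a class invariant determined by $\Delta_X$ alone via Lemma~\ref{002}, so once Proposition~\ref{014} has identified which $\Delta_X$ occur as $M$-th powers the weighting step is pure bookkeeping and no further check on where a particular root sits is required.
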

\begin{proof}
Since $\pm\mathds{1}_{2n}$ are in the center of $\Sp$, their centralisers are 
$\Sp$ itself. From Lemmas \ref{002} and \ref{003}, we have that
\begin{enumerate}
\itemindent=-13pt
\item if $f\in\Phi^*$ is of
degree $2k$ and $\mu_f=1^{m}$, then the
centraliser of $X$ inside $\SP(2km,\fq)$ is of order $|\U(m,q^{k})|$ ,
\item if $f\in\Phi\setminus
\Phi^*$ is of degree $k$ and $\mu_f=1^{m}$,
 then the
centraliser of $X$ inside $\SP(2dm,\fq)$ is of order $|\GL(m,q^d)|$.
\end{enumerate}
Hence using Proposition \ref{014} and the fact that centraliser of a general block diagonal 
matrix is direct sum of each of the corresponding centralisers, we have 
the result.
\end{proof}
We define the following functions for simplifying the statements in the case of 
orthogonal groups. These are motivated by the definitions in Chapter $3$ of \cite{fnp}.
\begin{align*}
Y_1^{*,M}(u,q)=&\prod_{d=1}^{\infty}
\left(1+\sum\limits_{m\geq 1}
\dfrac{u^{dm}}{|\U(m,q^d)|}\right)^{N_M^*(q,2d)}
\prod_{d=1}^{\infty}
\left(1+\sum\limits_{m\geq 1}
\dfrac{u^{dm}}{|\GL(m,q^d)|}\right)^{R_M^*(q,2d)}\\
\times&\prod\limits_{d=1}^\infty
\prod\limits_{e|q^d+1}\left(1+\sum\limits_{m=1}^\infty\mathcal{I}_{e,2d}(2dm)\dfrac{u^{dm}}{|\U(m,q^d)|}\right)^{N_M^{*,e}(q,2d)}\\
\times&\prod\limits_{d=1}^\infty
\prod\limits_{e|q^d-1}\left(1+\sum\limits_{m=1}^\infty\mathcal{I}_{e,d}(dm)\dfrac{u^{dm}}{|\GL(m,q^d)|}\right)^{R_M^{*,e}(q,2d)},
\end{align*}
\begin{align*}
Y_2^{*,M}(u,q)=&\prod_{d=1}^{\infty}
\left(1+\sum\limits_{m\geq 1}
\dfrac{(-1)^mu^{dm}}{|\U(m,q^d)|}\right)^{N_M^*(q,2d)}
\prod_{d=1}^{\infty}
\left(1+\sum\limits_{m\geq 1}
\dfrac{u^{dm}}{|\GL(m,q^d)|}\right)^{R_M^*(q,2d)}\\
\times&\prod\limits_{d=1}^\infty
\prod\limits_{e|q^d+1}\left(1+\sum\limits_{m=1}^\infty\mathcal{I}_{e,2d}(2dm)\dfrac{(-1)^mu^{dm}}{|\U(m,q^d)|}\right)^{N_M^{*,e}(q,2d)}\\
\times&\prod\limits_{d=1}^\infty
\prod\limits_{e|q^d-1}\left(1+\sum\limits_{m=1}^\infty\mathcal{I}_{e,d}(dm)\dfrac{u^{dm}}{|\GL(m,q^d)|}\right)^{R_M^{*,e}(q,2d)},
\end{align*}\begin{align*}
F_{+,+1}(u,q)=&1+\sum\limits_{m\geq 1}\left(
\dfrac{1}{|\On^+(2m,q)|}+\dfrac{1}{|\On^-(2m,q)|}
\right)u^m,\\
F_{-,+1}(u,q)=&1+\sum\limits_{m\geq 1}\left(
\dfrac{1}{|\On^+(2m,q)|}-\dfrac{1}{|\On^-(2m,q)|}
\right)u^m,\\
F_{+1}(u,q)=&1+\sum\limits_{m\geq 1}\dfrac{u^m}{|\SP(2m,q)|}\\
F_{+,-1}^M(u,q)=&
1+\sum\limits_{m\geq 1}
\left(
\dfrac{1}{|\On^+(mr(M,q),q)|}+
\dfrac{1}{|\On^-(mr(M,q),q)|}
\right)
{u}^{m\frac{r(M,q)}{2}},
\end{align*}\begin{align*}
F_{-,-1}^M(u,q)=&
1+\sum\limits_{m\geq 1}
\left(
\dfrac{1}{|\On^+(mr(M,q),q)|}-
\dfrac{1}{|\On^-(mr(M,q),q)|}
\right)
{u}^{m\frac{r(M,q)}{2}},\\
F_{-1}(u,q)=&1+\sum\limits_{m\geq 1}\dfrac{u^\frac{mr(M,q)}{2}}{|\SP(mr(M,q),q)|}.
\end{align*}
The next theorem is proved along the same as Theorem 
$3.1.6$ of \cite{fnp}.
\begin{theorem}\label{028}
Let $ss_{\On^\e}^M(n,q)$ denotes the probability of an element to be 
$M$-power semisimple in $\On^\e(2n,q)$ with $\e\in\{\pm\}$ and
$s_{\On^0}^M(n,q)$ denotes the probability of an element to be 
$M$-power semisimple in $\On^0(2n+1,q)$. 

Define 
\begin{align*}
SS_{\On^+}^M(q,u)&=1+\sum\limits_{m\geq 1}^\infty
ss_{\On^+}^M(m,q)u^m \\  
SS_{\On^-}^M(q,u)&=\sum\limits_{m\geq 1}^\infty
ss_{\On^-}^M(m,q)u^m \\
SS_{\On^0}^M(q,u)&=1+\sum\limits_{m\geq 1}^\infty
ss_{\On^0}^M(m,q)u^m. 
\end{align*}  Then
\begin{align*}
SS_{\On^+}^M(u^2)+SS_{\On^-}^M(u^2)
+2uSS_{\On^0}^M(u^2)
=&\left(F_{+,+1}^M(u^2)+uF_{+1}(u^2)\right)\\
\times&\left(F_{+,-1}^M(u^2)+uF_{-1}(u^2)\right)^{e(q)-1}Y_1^{*,M}(u^2),\\
 S_{\On^+}^M(u^2)-S_{\On^-}^M(u^2)=&F_{-,+1}(u^2)[F_{-,-1}^M(u^2)]^{e(q)-1}Y_2^{*,M}(u^2).
\end{align*}
\end{theorem}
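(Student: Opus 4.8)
\textbf{Proof proposal for Theorem \ref{028}.}
The plan is to mimic the strategy of Theorem \ref{025} (and of Theorem $3.1.6$ of \cite{fnp}), but now keeping track of the full semisimple combinatorial data rather than just the separable data. First I would recall that the conjugacy classes of $\On^\e$ carried by a fixed self-reciprocal characteristic polynomial are distinguished, on the $*$-irreducible factors of type $1$ and type $2$, by partitions into parts all equal to $1$ (since the element is semisimple), and on the factors $t\pm 1$ by orthogonal signed partitions all of whose parts equal $1$ — equivalently by the type $\e'$ of the corresponding orthogonal subspace and its dimension. The cycle-index philosophy of \cite{f2} then says that the generating function for $M$-power semisimple elements factors as a product over the $*$-irreducible polynomials: the factors with $f=f^*$ of type $1$ contribute the unitary Euler factors of $Y_1^{*,M}$ (split according to whether $f\in\Phi_M^*$ or not, via $N_M^*(q,2d)$, $N_M^{*,e}(q,2d)$ and the indicator $\mathcal{I}_{e,2d}$), the factors of type $2$ contribute the $\GL$ Euler factors (via $R_M^*$, $R_M^{*,e}$, $\mathcal{I}_{e,d}$), and the two factors $t-1$ and $t+1$ contribute the "mixed orthogonal" pieces $F_{+,+1}^M$, $F_{+,-1}^M$ and the symplectic pieces $F_{+1}$, $F_{-1}$.

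The key device, exactly as in \cite{fnp}, is to treat $\On^+$ and $\On^-$ simultaneously by summing (and differencing) their generating functions, and to absorb the odd-dimensional groups $\On^0$ via the substitution $u\mapsto u^2$ and the bookkeeping term $2uSS_{\On^0}^M(u^2)$. For the sum $SS_{\On^+}^M(u^2)+SS_{\On^-}^M(u^2)+2uSS_{\On^0}^M(u^2)$ I would argue that the contribution of a type $1$ or type $2$ block is \emph{independent of the ambient orthogonal type} (the centralizer orders in Lemmas \ref{002} and \ref{003} — namely $|\U(m,q^d)|$ and $|\GL(m,q^d)|$ — do not involve $\e$), so these blocks simply produce the factor $Y_1^{*,M}(u^2)$; meanwhile the block at $t+1$ can occur with any orthogonal type and dimension (subject to the $M$-th power constraint encoded by $r(M,q)$), so summing over the two possible types of that subspace yields $F_{+,-1}^M(u^2)+uF_{-1}(u^2)$ — the $uF_{-1}$ accounting for the odd-dimensional complement that forces the overall space into the $\On^0$ column — and this factor appears to the power $e(q)-1$ because $t+1$ contributes nontrivially only when $q$ is odd. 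The block at $t-1$ similarly yields $F_{+,+1}^M(u^2)+uF_{+1}(u^2)$, always present. For the difference $SS_{\On^+}^M(u^2)-SS_{\On^-}^M(u^2)$ I would insert the sign $\tau_f=\tau(V_f)$ into each Euler factor as in the proof of Theorem \ref{025}: a type $1$ factor of degree $2d$ with partition $1^m$ contributes a subspace of type $(-1)^m$, turning $1/|\U(m,q^d)|$ into $(-1)^m/|\U(m,q^d)|$ (this is precisely the definition of $Y_2^{*,M}$); a type $2$ factor always contributes type $+1$, so its $\GL$-factor is unchanged; the $t-1$ block becomes $F_{-,+1}(u^2)$ and the $t+1$ block becomes $F_{-,-1}^M(u^2)$ to the power $e(q)-1$; and the odd-dimensional $\On^0$ terms cancel in the difference, removing the $uF_{\pm 1}$ summands.

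After assembling these Euler factors, the last step is purely formal: substitute the counting formulas from Propositions \ref{007}, \ref{008} and Lemmas \ref{033}, \ref{009}, \ref{010} for the exponents $N_M^*(q,2d)$, $R_M^*(q,2d)$, $N_M^{*,e}(q,2d)$, $R_M^{*,e}(q,2d)$, invoke Lemma \ref{034} (via Remark \ref{035}) to justify that the indicator depends only on $(n,e)$ and may be pulled out of the sum over polynomials of a given degree and exponent, and collect terms to obtain the stated closed forms. I expect the main obstacle to be the careful matching of the $t\pm 1$ blocks: one must check that the $M$-th power condition on a $(-\mathds{1})$-eigenspace of dimension $L$ is exactly "$L$ is a multiple of $r(M,q)$" uniformly across $\On^+$, $\On^-$, $\On^0$ and $\SP$ — this is where the Dickson-polynomial factorization recalled before Proposition \ref{014} and the explicit root $-\mathds{1}_L$ enter — and that the type of the resulting subspace can be chosen freely, which is what makes the $F_{+,\pm 1}^M/F_{-,\pm 1}^M$ pair (rather than a single series) appear; a secondary subtlety is verifying that in the difference the $\On^0$ contributions cancel rather than merely shift, which relies on the two non-equivalent odd-dimensional forms giving isomorphic groups so that $SS_{\On^0}^M$ enters with a symmetric coefficient.
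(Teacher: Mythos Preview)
Your proposal is correct and follows essentially the same approach as the paper: the paper's own proof is a single sentence, ``Using similar argument as in Theorem \ref{025}, the proof follows,'' and your outline is precisely a fleshed-out version of that argument---factor over $*$-irreducible polynomials, let the type $1$/type $2$ pieces assemble into $Y_1^{*,M}$ (resp.\ $Y_2^{*,M}$ after inserting $\tau_f$), let the $t\pm 1$ blocks produce the $F$-functions, and use the sum/difference of $\On^+$ and $\On^-$ together with the $u\mapsto u^2$ bookkeeping exactly as in Theorem \ref{025} and \cite[Theorem 3.1.6]{fnp}. The subtleties you flag (the $r(M,q)$ constraint on the $(-\mathds{1})$-eigenspace and the cancellation of the $\On^0$ terms in the difference) are the right places to be careful, but they are handled just as in the separable case and in \cite{fnp}.
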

\begin{proof}
Using similar argument as in Theorem \ref{025}, the proof follows.
\end{proof}

\section{Generating Functions for Cyclic  Matrices}

Before we give the generating function for the cyclic conjugacy classes 
we find out which matrices 
with eigenvalue $ 1$ (or $-1$) are $M$-th power. Note that two 
matrices $A$ and $B$ are conjugate if and only if $-A$ and $-B$ are conjugate.
Hence, conjugacy classes of matrices with all eigenvalue $1$ is in bijection with
conjugacy classes of matrices with all eigenvalue $-1$.
Recall that an element $X\in\Sp$ is cyclic if and only if $c_X(t)=m_X(t)$.
 Hence we concentrate on single Jordan block with 
eigenvalue $1$.
Since Jordan blocks of odd size should occur even times, they do not contribute to cyclic elements (refer \cite{f2}, pp 48). 
Although, we have the following 
\begin{lemma}\label{018}
If $(M,q)=1$, then every unipotent conjugacy class is an $M$-th power.
\end{lemma}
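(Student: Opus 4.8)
The plan is to use nothing about the symplectic (or orthogonal) structure and only the elementary fact that a unipotent element in characteristic $p$ has order a power of $p$, combined with the coprimality $(M,q)=1$.

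First I would fix a unipotent $X\in\Sp$ and write $X=\mathds{1}+N$ with $N$ nilpotent. Since $q=p^{a}$, the ambient matrix algebra has characteristic $p$, so $(\mathds{1}+N)^{p^{s}}=\mathds{1}+N^{p^{s}}$ for every $s\ge 1$; taking $p^{s}$ at least the nilpotency index of $N$ gives $X^{p^{s}}=\mathds{1}$. Hence the order of $X$ divides $p^{s}$, say it equals $p^{k}$, and in particular $\langle X\rangle$ is a cyclic $p$-group contained in $\Sp$.

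Next, since $(M,q)=1$ forces $(M,p)=1$ and therefore $(M,p^{k})=1$, the integer $M$ is invertible modulo $p^{k}$; I would pick $M'\in\mathbb{Z}$ with $MM'\equiv 1\pmod{p^{k}}$ and set $\alpha=X^{M'}$. Then $\alpha\in\Sp$ (a power of $X$), and $\alpha^{M}=X^{MM'}=X$ because $X^{p^{k}}=\mathds{1}$. Thus $X$ is an $M$-th power in $\Sp$, with an $M$-th root that is itself unipotent. Since $X$ was an arbitrary unipotent element — in particular a representative of any prescribed unipotent conjugacy class, and the set of $M$-th powers is closed under conjugation — every unipotent conjugacy class is an $M$-th power.

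There is essentially no obstacle here: the only step requiring any care is the characteristic-$p$ computation showing that $X$ has $p$-power order, which is routine. I would also note that the argument is purely group-theoretic and uses no feature of $\Sp$: in any finite group $G$ with $M$ coprime to the $p$-part of $|G|$, every $p$-element is an $M$-th power, so the same statement and proof apply verbatim to $\On^{\e}(m,q)$.
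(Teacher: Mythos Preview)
Your proof is correct. The paper actually states this lemma without proof, treating it as elementary; your argument---that a unipotent element in characteristic $p$ has $p$-power order, so $M$ is invertible modulo that order and $X^{M'}$ (with $MM'\equiv 1$) furnishes an $M$-th root inside $\langle X\rangle\subset\Sp$---is precisely the standard justification one would supply, and your remark that it works verbatim in $\On^{\e}(m,q)$ is also correct.
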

Let us denote by $U_{f,n}=\begin{pmatrix}
C_f & \mathds{1} &  & &&\\
& C_f & \mathds{1} &&&\\
& & C_f &\mathds{1}&&\\
&&&\ddots&\ddots&\\
&&&&C_f&\mathds{1}\\
&&&&& C_f
\end{pmatrix}$, also sometimes by $U_{C_f,n}$ the matrix of size $n\deg f\times n\deg f$, where $f$ is a monic irreducible polynomial
and $C_f$ is the standard companion matrix of $f$. 
Now we want to find the structure of semisimple
part $\alpha_s$ of $\alpha$, where 
$\alpha^M=U_{t+1,n}$ and
$\alpha\in\GL(n,q)$. We have the following 
\begin{lemma}\label{019} 
Let $(M,q)=1$ and
there exists $\alpha\in\GL(n,q)$ satisfying $\alpha^M=U_{t+1,n}$. Then
$\alpha_s$ is a scalar matrix. 
\end{lemma}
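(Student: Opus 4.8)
The plan is to use the fact that $U_{t+1,n}$ is a \emph{cyclic} (regular) matrix, namely the single Jordan block $J_n(-1)$, so that its centraliser in $\GL(n,q)$ is the commutative local algebra $\fq[U_{t+1,n}]\cong \fq[t]/\big((t+1)^n\big)$, and then to combine this with the uniqueness of the multiplicative Jordan decomposition.

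First I would record the structure of the target. Write $N$ for the nilpotent matrix with $1$'s on the superdiagonal and $0$'s elsewhere, so that $U_{t+1,n}=-\mathds{1}_n+N$. Hence its semisimple part is $-\mathds{1}_n$ and its unipotent part is a regular unipotent element; in particular $U_{t+1,n}$ is cyclic, with characteristic polynomial $(t+1)^n$. Consequently the centraliser of $U_{t+1,n}$ in $\GL(n,q)$ is exactly the group of units of $A:=\fq[U_{t+1,n}]$, a local ring isomorphic to $\fq[t]/\big((t+1)^n\big)$, with maximal (and nil) ideal $\mathfrak{m}$ generated by the image of $t+1$ and residue field $A/\mathfrak{m}\cong\fq$.

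Next I would pass the information to $\alpha$. Since $\alpha^M=U_{t+1,n}$, the element $\alpha$ commutes with $U_{t+1,n}$, so $\alpha\in A$. As $\fq$ is perfect, the Jordan decomposition $\alpha=\alpha_s\alpha_u$ is such that $\alpha_s$ is a polynomial in $\alpha$; therefore $\alpha_s\in A$ as well. Moreover, applying uniqueness of the Jordan decomposition to $\alpha^M=\alpha_s^M\alpha_u^M$ (the first factor semisimple, the second unipotent, and the two commuting) gives $\alpha_s^M=(U_{t+1,n})_s=-\mathds{1}_n$, which is the byproduct the lemma will be used for.

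Finally I would argue that a semisimple element of the local algebra $A$ is forced to be scalar. The minimal polynomial of $\alpha_s$ is squarefree, so $\fq[\alpha_s]$ is a reduced $\fq$-subalgebra of $A$; it therefore meets the nilradical $\mathfrak{m}$ trivially, so the composite $\fq[\alpha_s]\hookrightarrow A\twoheadrightarrow A/\mathfrak{m}\cong\fq$ is injective, whence $\fq[\alpha_s]=\fq\cdot\mathds{1}_n$ and $\alpha_s$ is a scalar matrix. (Equivalently, every element of $A$ is uniquely a scalar plus a nilpotent, and the scalar is precisely its semisimple part.) The only point requiring care — the ``main obstacle'' — is the identification of $C_{\GL(n,q)}(U_{t+1,n})$ with the \emph{local} algebra $\fq[U_{t+1,n}]$ and the resulting inclusion $\alpha_s\in\fq[\alpha]\subseteq\fq[U_{t+1,n}]$; this is exactly where cyclicity of $U_{t+1,n}$ enters, and once it is in place the conclusion is immediate.
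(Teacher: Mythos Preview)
Your argument is correct. It is, however, a genuinely different route from the one in the paper.

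The paper works directly with the rational canonical form of $\alpha$: it uses that when $(M,q)=1$ the block $U_{C_f,n}^{M}$ is conjugate to $U_{C_f^{M},n}$, so that an $M$-th root of a single Jordan block $U_{t+1,n}$ must itself be (conjugate to) a single block $U_{C_f,n}$; matching with $\alpha^{M}=U_{t+1,n}$ then forces $\deg f=1$, i.e.\ $C_f=\gamma\in\fq^{\times}$ with $\gamma^{M}=-1$, and $\alpha_s=\gamma\,\mathds{1}$. Your proof instead never touches the Jordan form of $\alpha$: you observe that cyclicity of $U_{t+1,n}$ pins down its centraliser as the \emph{local} algebra $A=\fq[U_{t+1,n}]\cong\fq[t]/\bigl((t+1)^n\bigr)$, pull $\alpha$ and hence $\alpha_s\in\fq[\alpha]$ into $A$, and then note that a semisimple element of a local Artinian $\fq$-algebra with residue field $\fq$ is necessarily a scalar. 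This is cleaner and in fact does not use the hypothesis $(M,q)=1$ at all for the scalar conclusion; the paper's approach, on the other hand, is more explicit about the actual shape of $\alpha$ (conjugate to $\gamma\,U_{t-1,n}$), which feeds directly into the subsequent corollaries. Both arrive at the same byproduct $\alpha_s^{M}=-\mathds{1}_n$.
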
 
\begin{proof}
Let $\alpha$ be conjugate to $U_{C_f,n}$ for some monic irreducible polynomial $f$. Then since $(M,q)=1$, we have that 
$\alpha^M$ is conjugate to $U_{C_f^M,n}$. 
Now $\alpha^M=U_{t+1,n}$ implies that $C_f^M=-1$, whence  
$U_{C_f,n}$ is conjugate to $\gamma\mathds{1}U_{t-1,n}$, where 
$C_f$ is denoted as $\gamma$ (as it is a $1\times 1$ matrix). So
$\alpha_s=\gamma\mathds{1}$, is a scalar matrix, as claimed.
%
\end{proof}
\begin{corollary}\label{019} 
Let $(M,q)=1$ and $U_{t+1,n}^{\SP}\in\SP(2m,q)$ where $U_{t+1,n}^{\SP}$ is conjugate 
to $U_{t+1,n}$ in $\GL(2m,q)$. Then $U_{t+1,n}^{\SP}$ is an $M$-th 
power if and only if $M$ is odd.
\end{corollary}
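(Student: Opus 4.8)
The plan is to handle the two implications separately, using Lemma~\ref{018} for the ``if'' direction and the preceding lemma on scalar semisimple parts for the ``only if'' direction. For the reverse implication, suppose $M$ is odd. Since $U_{t+1,n}^{\SP}$ is $\GL(2m,q)$-conjugate to the single Jordan block $U_{t+1,n}$, whose only eigenvalue is $-1$, the element $-U_{t+1,n}^{\SP}=(-\mathds{1})U_{t+1,n}^{\SP}$ is a unipotent element of $\SP(2m,q)$. By Lemma~\ref{018} there is $\beta\in\SP(2m,q)$ with $\beta^M=-U_{t+1,n}^{\SP}$. Then $-\beta=(-\mathds{1})\beta\in\SP(2m,q)$, and because $M$ is odd, $(-\beta)^M=(-1)^M\beta^M=-\beta^M=U_{t+1,n}^{\SP}$, so $U_{t+1,n}^{\SP}$ is an $M$-th power in $\SP(2m,q)$.

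For the forward implication, assume $U_{t+1,n}^{\SP}=\alpha^M$ with $\alpha\in\SP(2m,q)$, and suppose for contradiction that $M$ is even. Viewing $\alpha$ inside $\GL(2m,q)$ and conjugating so that $\alpha^M$ is literally $U_{t+1,n}$, the preceding lemma (the one asserting that $\alpha_s$ is scalar whenever $\alpha\in\GL(n,q)$ satisfies $\alpha^M=U_{t+1,n}$) shows that the semisimple part $\alpha_s$ is a scalar matrix $\gamma\mathds{1}$ with $\gamma\in\fq^\times$; since being scalar is a conjugacy-invariant property this also holds for the original $\alpha$. Because the Jordan decomposition of an element of $\SP(2m,q)$ keeps both factors inside $\SP(2m,q)$ (equivalently, $\alpha_s$ is a polynomial in $\alpha$ over $\fq$ and one checks directly that it preserves the alternating form), we get $\gamma\mathds{1}\in\SP(2m,q)$, hence $\gamma^2=1$ and $\gamma=\pm1$. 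On the other hand, $\alpha^M=\alpha_s^M\alpha_u^M$ is the Jordan decomposition of $\alpha^M$ (using $(M,q)=1$, so $\alpha_u^M$ is again unipotent), so comparing semisimple parts gives $\gamma^M\mathds{1}=(U_{t+1,n}^{\SP})_s=-\mathds{1}$, i.e.\ $\gamma^M=-1$. But $M$ even forces $\gamma^M=(\gamma^2)^{M/2}=1$, so $1=-1$ in $\fq$; this means $q$ is a power of $2$, contradicting $(M,q)=1$ since $M\geq 2$ is even. Therefore $M$ is odd.

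The only step that needs care is the forward direction: one must invoke that the semisimple part of an element of $\SP(2m,q)$ again lies in $\SP(2m,q)$, so that ``$\alpha_s$ scalar'' becomes the genuine constraint $\gamma=\pm1$, and one must note that the earlier lemma, though phrased for $U_{t+1,n}$ exactly, transfers to any $\GL(2m,q)$-conjugate of it. Everything else is formal; in particular the argument never refers to the sign attached to the even part $n$ in the symplectic signed partition, so it applies to whichever $\SP(2m,q)$-class the symbol $U_{t+1,n}^{\SP}$ denotes.
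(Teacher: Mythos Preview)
Your proof is correct and follows essentially the same route as the paper's: for $M$ odd you take an $M$-th root $\beta$ of the unipotent element $-U_{t+1,n}^{\SP}$ and note $(-\beta)^M=U_{t+1,n}^{\SP}$, while for the converse you use the preceding lemma to force $\alpha_s=\gamma\mathds{1}$, then use that the only scalars in $\SP(2m,q)$ are $\pm\mathds{1}$ together with $\gamma^M=-1$ to conclude $M$ is odd. Your version is a bit more explicit than the paper's (you spell out why $\alpha_s\in\SP(2m,q)$, and you handle the characteristic~$2$ endpoint via the hypothesis $(M,q)=1$ rather than silently assuming $q$ odd), but the argument is the same.
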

\begin{proof}
Let $M$ be odd. Note that $-U_{t+1,n}^{\SP}$ is a unipotent element and
hence has an $M$-th root, say $\alpha$. Then $(-\alpha)^M=U_{t+1,n}^{\SP}$.

Conversely suppose 
$\alpha^M=U_{t+1,n}^{\SP}$ for some $\alpha\in\SP(2m,q)$.
 Also $U_{t+1,n}^{\SP},\alpha\in\GL(2m,q)$ implies that $\alpha_s$ is a 
scalar matrix. Then
we have that $\alpha_s=-\mathds{1}$, since $\SP(2m,q)$ contains only the 
scalar matrices $\{\pm\mathds{1}\}$. Hence $M$ should be odd.
\end{proof}
\begin{corollary}\label{022}
Let $(M,q)=1$.
Any matrix $X\in\Sp$ with combinatorial data $(t+1,m^\pm)$ is an $M$-th power
if and only if $M$ is odd.
\end{corollary}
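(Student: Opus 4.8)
The plan is to recognise the statement as essentially a reformulation of Corollary \ref{019} in the language of combinatorial data, and then, for self-containedness, to re-derive the two implications from Lemma \ref{018} and Lemma \ref{019}. First I would unwind the parameterisation: an element $X\in\Sp$ with data $(t+1,m^\pm)$ has $c_X(t)=m_X(t)=(t+1)^m$, so $m$ is even (a single odd part is not an admissible symplectic signed partition) and $X$ is conjugate in $\GL(m,q)$ to the single Jordan block $U_{t+1,m}$; the superscript $\pm$ only records which of the two symplectic conjugacy classes lying over this $\GL$-class is meant, and Corollary \ref{019} applies to each of them. Strictly speaking this already closes the argument, but I would also spell it out.

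For the implication ``$M$ odd $\Rightarrow$ $X$ is an $M$-th power'': negating turns the Jordan block with eigenvalue $-1$ into one with eigenvalue $1$, so $-X$ is a unipotent element of $\Sp$, and $-\mathds{1}_m\in\Sp$. By Lemma \ref{018} there is $\beta\in\Sp$ with $\beta^M=-X$, and then $(-\beta)^M=(-1)^M\beta^M=-\beta^M=X$ since $M$ is odd. For the converse, suppose $\alpha^M=X$ with $\alpha\in\Sp$; conjugating inside $\GL(m,q)$ we may assume $\alpha^M=U_{t+1,m}$, so by Lemma \ref{019} the semisimple part $\alpha_s$ is scalar. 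Comparing semisimple parts in the multiplicative Jordan decomposition gives $\alpha_s^M=-\mathds{1}_m$; but the only scalars in $\Sp$ are $\pm\mathds{1}_m$, so $\alpha_s=-\mathds{1}_m$ and $(-1)^M=-1$, i.e. $M$ is odd.

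The only step that is not purely formal is the first one: one must check that the combinatorial label $(t+1,m^\pm)$ really does correspond to a single Jordan block (so that Lemma \ref{019}, which concerns the specific matrix $U_{t+1,m}$ and not a general ``$(-\mathds{1})\cdot(\text{unipotent})$'' element, is applicable) and that passing between the two symplectic classes over a fixed $\GL$-class changes nothing relevant. Once this dictionary is in place, the rest is immediate from the already-established Lemma \ref{018} and Lemma \ref{019}, equivalently directly from Corollary \ref{019}. I expect no genuine obstacle here; the work is entirely in the bookkeeping of the conjugacy-class data.
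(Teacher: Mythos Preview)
Your proposal is correct and follows essentially the same route as the paper: the paper's proof is the single line ``This follows from \ref{018} and proof of \ref{019}'', and you have simply unpacked that sentence, translating the combinatorial label $(t+1,m^\pm)$ into the single Jordan block $U_{t+1,m}$ (with $m$ even) and then rerunning verbatim the two implications from the proof of Corollary~\ref{019}. Your extra care about the two symplectic classes lying over the same $\GL$-class is a harmless elaboration rather than a different idea.
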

\begin{proof}
This follows from \ref{018} and proof of \ref{019}.
\end{proof}
From Chapter 3 of \cite{bg}, we use tensor product construction to study the $\pm1$-potent conjugacy
classes and denote them by $[J_a^{b,\e}]$. From Lemma $3.4.7$ \cite{bg}, we have
\begin{lemma}
A unipotent element of type $[J_a^{b,\e}]$ fixes a pair of complementary maximal totally 
isotropic subspaces of the natural $\SP_{ab}(q)$-module if and only if $\e=+$.
\end{lemma}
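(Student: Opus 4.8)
\emph{Approach.} The plan is to realise the class $[J_a^{b,\e}]$ through the standard tensor construction and to reinterpret the condition ``fixes a pair of complementary maximal totally isotropic subspaces'' as ``is conjugate into a Siegel--Levi subgroup $\GL(ab/2,q)\leq\SP(ab,q)$''. Write the natural module as $V=V_a\otimes V_b$, where $V_a$ is $a$-dimensional carrying the single unipotent Jordan block $u_a$ together with its natural $u_a$-invariant form $\beta_a$ (alternating if $a$ is even, symmetric if $a$ is odd), and $V_b$ is the $b$-dimensional multiplicity space with a compatible form $\beta_b$ (symmetric, of orthogonal type $\e$, when $a$ is even; alternating when $a$ is odd). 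By construction the class $[J_a^{b,\e}]$ is that of $u:=u_a\otimes\mathrm{id}_{V_b}$, with ambient form $\langle\,,\,\rangle=\beta_a\otimes\beta_b$, and as an $\fq[u]$-module one has $V\cong V_a^{\oplus b}$.

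For the ``if'' direction I would argue as follows. Assume $\e=+$ (when $a$ is odd, $\beta_b$ is alternating, hence split, and the same argument applies with ``$+$'' read as ``split''). Choose a decomposition $V_b=L\oplus L'$ into complementary totally $\beta_b$-isotropic subspaces, each of dimension $b/2$, and set $W=V_a\otimes L$, $W'=V_a\otimes L'$. Since $u$ acts only on the first tensor factor and $u_aV_a=V_a$, both $W$ and $W'$ are $u$-invariant; they are complementary, and $\langle v\otimes x,\,v'\otimes x'\rangle=\beta_a(v,v')\,\beta_b(x,x')=0$ for $v\otimes x,\,v'\otimes x'\in W$, so $W$ and, likewise, $W'$ are totally isotropic of dimension $a\cdot b/2=\tfrac12\dim V$, hence maximal. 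Thus $u$ fixes the complementary pair $\{W,W'\}$.

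For the ``only if'' direction, suppose $u$ fixes complementary maximal totally isotropic subspaces $W,W'$. These are $\fq[u]$-submodules with $V=W\oplus W'$, so by the Krull--Schmidt theorem applied to $V\cong V_a^{\oplus b}$ each of $W,W'$ is a direct sum of size-$a$ blocks; comparing dimensions forces $b$ even and gives $u|_W$ of Jordan type $a^{b/2}$. If $a$ is odd then $\e=+$ automatically and we are done. If $a$ is even, it suffices, by Wall's parametrisation of unipotent classes (\cite{wa}; cf.\ Lemma \ref{002}), to show that the nondegenerate symmetric form $B_a$ that Wall attaches to the part $a$ on the multiplicity space $M_a=V/NV$ (with $N$ the nilpotent part of $u$) admits a totally isotropic subspace of dimension $b/2$. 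Since $V=W\oplus W'$ with both summands $N$-stable, $NV=NW\oplus NW'$ and hence $M_a=(W/NW)\oplus(W'/NW')$, each summand of dimension $b/2$; and because $B_a$ is built solely from $\langle\,,\,\rangle$ and powers of $N$, and $W$ is both $N$-stable and totally isotropic, $B_a$ vanishes on classes represented inside $W$. Thus $W/NW\subseteq M_a$ is totally $B_a$-isotropic of dimension $\tfrac12\dim M_a$, so $(V_b,\beta_b)$ is of $+$ type, i.e.\ $\e=+$.

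The step requiring care is the identification underlying the last paragraph: one must pin down the precise normalisation of the induced form $B_a$ on $M_a$ (well-definedness, symmetry, nondegeneracy), following \cite{wa}, and verify that this invariant is exactly the orthogonal-type label $\e$ occurring in the tensor construction of \cite{bg}. Once that dictionary is in place, the vanishing ``$W$ totally isotropic and $N$-stable $\Rightarrow$ $W/NW$ totally $B_a$-singular'' is immediate, and the remainder is routine bookkeeping with Jordan blocks and Krull--Schmidt.
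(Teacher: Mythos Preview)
The paper does not supply its own proof of this lemma: it is quoted verbatim as Lemma~3.4.7 of Burness--Giudici \cite{bg} and used as a black box. So there is no in-paper argument to compare against; your proposal is an independent proof sketch of a result the authors simply cite.

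As to the sketch itself, the ``if'' direction is clean and correct: writing $V=V_a\otimes V_b$ and, when the multiplicity form on $V_b$ is split, tensoring $V_a$ with a Lagrangian decomposition of $V_b$ gives the desired $u$-stable complementary maximal totally isotropic pair. For the ``only if'' direction your strategy is the right one --- Krull--Schmidt forces $W\cong V_a^{\oplus b/2}$, and then one detects the sign $\e$ via Wall's induced form on the multiplicity space --- and your flagged caveat is exactly the crux. Concretely, you should make explicit that the Wall form on $V/NV$ is $B_a(\bar v,\bar w)=\langle v,N^{a-1}w\rangle$ (well-defined because $N^a=0$ and because the isometry condition for $u$ yields the needed adjointness of $N$ modulo higher powers), and that under the tensor model this form is, up to a nonzero scalar, $\beta_b$; this is precisely the content of the relevant computation in \cite{bg}, and once established your vanishing argument (``$W$ totally isotropic and $N$-stable $\Rightarrow$ $W/NW$ is $B_a$-isotropic of half dimension'') goes through unchanged. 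One small point worth making explicit is that the injection $W/NW\hookrightarrow V/NV$ uses $W\cap NV=NW$, which you get from the $\fq[u]$-module splitting $V=W\oplus W'$.
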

\begin{corollary}
We have that $[J_a^{b,\e}]^M=[J_a^{b,\e}]$.
\end{corollary}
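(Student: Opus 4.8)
The plan is to combine two elementary observations: that, when $(M,q)=1$, the $M$-th power map preserves the $\GL$-Jordan type of a unipotent element, and that it also preserves the finer invariant $\e$, which by the preceding Lemma is detected by a subspace-fixing property that is stable under passing to powers.

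First I would fix a unipotent representative $u$ of the class $[J_a^{b,\e}]$ acting on the natural module $V$ of dimension $ab$. Since $u$ is unipotent, its order is a power of the characteristic $p$ of $\fq$, so $M$ is coprime to the order of $u$, and therefore $\langle u\rangle=\langle u^M\rangle$; in particular $u$ is a power of $u^M$ and conversely, so $u^M$ is again unipotent and $u$ and $u^M$ have exactly the same fixed subspaces. To see that $u^M$ has the same Jordan type $[J_a^b]$ as $u$, I would work in the commutative local $\fq$-algebra $\fq[u]$: there $u^M-\mathds{1}=(u-\mathds{1})\bigl(\mathds{1}+u+\cdots+u^{M-1}\bigr)$, and the second factor is a unit because it reduces to $M\neq 0$ modulo the nilradical. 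Hence $u-\mathds{1}$ and $u^M-\mathds{1}$ generate the same ideal, so $\mathrm{rank}\,(u-\mathds{1})^j=\mathrm{rank}\,(u^M-\mathds{1})^j$ for every $j$, which is precisely the assertion that $u$ and $u^M$ are $\GL(V)$-conjugate.

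Next I would check that $\e$ is unchanged. By the Lemma above, $\e=+$ if and only if $u$ fixes a pair of complementary maximal totally isotropic subspaces of $V$. If $u$ fixes such a pair, so does $u^M$; conversely, if $u^M$ fixes such a pair then so does $u$, since $u\in\langle u^M\rangle$. Thus $u$ and $u^M$ fix such a pair simultaneously, so $u^M$ carries the same sign $\e$ as $u$. Having the same Jordan type and the same sign, $u^M$ lies in the class $[J_a^{b,\e}]$, which is the claim. (If one also wants the $-1$-potent version, it reduces to the unipotent case when $M$ is odd by writing a representative as $-u$ with $u$ unipotent, since then $(-u)^M=-u^M$.)

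The step requiring the most care is not any of these computations but the underlying structural fact, drawn from the tensor-product description in \cite{bg}, that the pair consisting of the Jordan type $[J_a^b]$ together with the isotropy invariant $\e$ is a \emph{complete} invariant of the $\SP_{ab}(q)$-class of such a unipotent element; only then does coincidence of these two data force the two classes to be equal. One should likewise double-check the degenerate ranges of $a$ and $b$ and interpret "maximal totally isotropic" suitably in the orthogonal analogue, but no new idea enters there.
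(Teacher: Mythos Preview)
Your proof is correct and follows essentially the same route as the paper: both use the preceding Lemma's characterization of $\e$ via fixing a pair of complementary maximal totally isotropic subspaces, together with the fact that $u$ and $u^M$ generate the same cyclic group (since $(M,q)=1$ and $u$ has $p$-power order), to conclude that the sign is preserved. Your argument is in fact slightly more explicit than the paper's, since you spell out via the unit factor $\mathds{1}+u+\cdots+u^{M-1}$ that the $\GL$-Jordan type $[J_a^b]$ is preserved, whereas the paper leaves this implicit in the tensor description $J_a^M\otimes I_b^M$; and you rightly flag that the only nontrivial input is that Jordan type together with $\e$ is a complete invariant of the $\SP_{ab}(q)$-class.
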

\begin{proof}
Let $\e=+$. Then there are complementary maximally totally isotropic subspaces $W_1,W_2$ of dimension $\frac{b}{2}$
such that $J_a\otimes I_b$ fixes $U\otimes W_1$ and $U\otimes W_2$. Then $J_a^M\otimes I_b^M$ fixes $U\otimes W_1$ and $U\otimes W_2$ and hence the 
result follows in this case.

For $\e=-$, on the contrary assume $[J_a^{b,-}]^M$ has the property that it fixes a pair of complementary maximally totally 
isotropic subspaces. Since $[J_a^{b,-}]$
has power coprime to $M$, we have that $[J_a^{b,-}]$ fixes a pair of complementary maximally totally 
isotropic subspaces, which is a contradiction.
\end{proof}
\begin{corollary}
For $M$ odd, we have that $[-J_a^{b,\e}]^M=[-J_a^{b,\e}]$.
\end{corollary}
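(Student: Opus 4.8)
The plan is to deduce the statement from the preceding corollary by factoring out the central element $-\mathds{1}$, working throughout in the standing regime $(M,q)=1$. First I would note that $-\mathds{1}$ lies in the centre of $\SP(ab,q)$, so a representative of the class $[-J_a^{b,\e}]$ may be taken to be $-X$, where $X$ is a representative of the unipotent class $[J_a^{b,\e}]$; in particular $-X\in\SP(ab,q)$, since it is the product of the central element $-\mathds{1}$ with an element of $\SP(ab,q)$.

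Next, using that $M$ is odd, I would write $(-X)^M=(-\mathds{1})^M X^M=-X^M$. By the preceding corollary, $X^M$ is conjugate to $X$ inside $\SP(ab,q)$, say $X^M=gXg^{-1}$ with $g\in\SP(ab,q)$. Since $g$ commutes with the scalar $-\mathds{1}$, conjugating by the same $g$ gives $(-X)^M=-X^M=-gXg^{-1}=g(-X)g^{-1}$, so $(-X)^M$ lies in the class $[-J_a^{b,\e}]$. Because the image of a single conjugacy class under the $M$-th power map is again a single conjugacy class, this yields $[-J_a^{b,\e}]^M=[-J_a^{b,\e}]$.

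I do not expect a genuine obstacle; the only point requiring care is that one must invoke the preceding corollary, rather than merely the fact that $X^M$ is a unipotent element of the same Jordan type, in order to secure a conjugating element inside $\SP(ab,q)$ (and not just inside $\GL(ab,q)$). This is essential because the $\SP$-class of a unipotent element with a single Jordan block also records the sign $\e$, via the presence or absence of a fixed pair of complementary maximal totally isotropic subspaces; multiplying the conjugation by the central $-\mathds{1}$ preserves the symplectic structure, hence transports this sign correctly. Finally, I would remark that the hypothesis that $M$ is odd cannot be dropped: for $M$ even one has $(-X)^M=X^M$, which is conjugate to the unipotent $X$, so $[-J_a^{b,\e}]^M=[J_a^{b,\e}]\neq[-J_a^{b,\e}]$, in agreement with Corollary \ref{022}.
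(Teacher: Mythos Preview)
Your proposal is correct and matches the approach implicit in the paper, which states the corollary without proof as an immediate consequence of the preceding one: since $-\mathds{1}$ is central and $M$ is odd, $(-X)^M=-X^M$, and the $\SP$-conjugacy $X^M\sim X$ from the preceding corollary transports to $-X^M\sim -X$. Your remarks on why the conjugating element must be taken inside $\SP(ab,q)$ and why the hypothesis that $M$ is odd cannot be dropped are apt and make explicit what the paper leaves to the reader.
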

\begin{proposition}\label{022}
Let $cc^M_{\SP}(2n,q)$ denotes the number of $M$-power cyclic
 conjugacy classes in $\Sp$ and 
$cC^M_{\SP}(q,u)=1+\sum\limits_{m=1}^{\infty}cc^M_{\SP}(2m,q)u^{m}$. Then $cC^M_{\SP}(q,u)$ is given by 
\begin{equation}
\left(\dfrac{2}{1-u}-1-u\right)^{h(q,M)}\displaystyle\prod_{d=1}^{\infty}(1-u^d)^{- N_M^*(q,2d)}\prod_{d=1}^{\infty}(1- u^d)^{- R_M^*(q,2d)},
\end{equation}
where \begin{align*}
h(q,M)=\begin{cases}
2 & \text{if }M=\text{odd}\\
1 & \text{otherwise}
\end{cases}.
\end{align*}
\end{proposition}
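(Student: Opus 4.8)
The plan is to mimic the structure of the proofs of Propositions \ref{011} and \ref{014}, namely: first characterize which cyclic conjugacy classes of $\Sp$ are $M$-th powers in terms of their combinatorial data $\Delta_X$, then translate this characterization into an Euler product in $u$. Recall that $X\in\Sp$ is cyclic iff $c_X(t)=m_X(t)$, which forces each $*$-irreducible polynomial to appear in $c_X(t)$ in a single Jordan block (of some size $\geq 1$), and moreover that Jordan blocks of odd size for $t\pm 1$ must occur with even multiplicity — so for the polynomials $t\pm 1$ the only cyclic possibility is $U_{t\pm 1,n}^{\SP}$ with $n$ even, i.e. the combinatorial data $(t\pm 1, n^{\pm})$ with $n$ even. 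By Corollary \ref{022} (the one characterizing $(t+1,m^\pm)$) together with Lemma \ref{018}, the block at $t-1$ is always an $M$-th power (unipotent, $(M,q)=1$), while the block at $t+1$ is an $M$-th power iff $M$ is odd; this is exactly where the exponent $h(q,M)$ enters.

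Next I would handle the blocks attached to a $*$-irreducible polynomial $f$ of type $1$ or $2$. Here the key input is the central-join machinery referenced after Lemma \ref{003} together with Lemmas \ref{004} and \ref{005}: a single Jordan block $U_{C_f,n}$ (of type $1$ or $2$, hence with $f$ SRIM or $f=gg^*$) is an $M$-th power in $\Sp$ iff the corresponding companion-type block $C_f$ itself is an $M$-th power, because the unipotent "ladder" part $U_{\cdot,n}$ always has an $M$-th root when $(M,q)=1$ (by the tensor-product / fixed-isotropic-subspace argument in the corollaries preceding this proposition, or directly by Lemma \ref{018} applied block-wise). Thus for type $1$ the block of size $n\deg f$ contributes exactly when $f\in\Phi_M^*$, and for type $2$ exactly when $g\in\Phi_M$; the block size $n$ is arbitrary in $\mathbb{Z}_{\geq 1}$. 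I expect the main obstacle to be justifying cleanly that the presence of the nontrivial nilpotent ladder does not obstruct taking $M$-th roots (i.e., that one can take an $M$-th root of the semisimple part and of the unipotent part "compatibly" inside $\Sp$, not merely inside $\GL$); this is precisely the content one must extract from Lemmas \ref{018}, \ref{019} and the $[J_a^{b,\e}]$-corollaries, and the argument is the delicate point because unlike in $\GL(n,q)$ an $M$-th root in the ambient general linear group need not lie in $\Sp$.

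Finally I would assemble the generating function. The $t\pm 1$ contribution: each of the (one or two, according to $h(q,M)$) admissible $*$-irreducible polynomials of type $3$ contributes a factor recording a single even-size block together with its sign choice $\pm$; a block of (even) size $2m$ in $\On^{\e}$-flavour with two sign options contributes $2\sum_{m\geq 1}u^m$ and the empty block contributes $1$, but one must subtract the overcount coming from the two signs agreeing on the trivial case — this bookkeeping produces the factor $\frac{2}{1-u}-1-u = 1 + 2\sum_{m\geq 1}u^m - u$, raised to the power $h(q,M)$. For each type $1$ polynomial $f\in\Phi_M^*$ of degree $2d$ one gets a factor $\sum_{n\geq 0}u^{dn}=(1-u^d)^{-1}$, and there are $N_M^*(q,2d)$ such $f$, giving $\prod_d (1-u^d)^{-N_M^*(q,2d)}$; for each pair $\{g,g^*\}$ with $g\in\Phi_M$ of degree $d$ (so the block has degree $2\cdot(\deg g)\cdot n$, contributing $u^{dn}$ after the substitution $u\mapsto$ half-degree), one gets $(1-u^d)^{-1}$ with multiplicity $R_M^*(q,2d)$, giving $\prod_d(1-u^d)^{-R_M^*(q,2d)}$. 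Multiplying these three pieces yields the claimed formula. The routine parts are the degree bookkeeping in the exponents of $u$; the genuinely substantive step remains the $\Sp$-internal root existence for mixed unipotent-times-semisimple blocks.
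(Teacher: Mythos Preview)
Your overall strategy is the same as the paper's: characterize the cyclic classes via $\Delta_X$, use that $U_{C_f,n}^M$ is conjugate to $U_{C_f^M,n}$ when $(M,q)=1$ to reduce the type~1/type~2 blocks to the conditions $f\in\Phi_M^*$ (resp.\ $g\in\Phi_M$), invoke Lemma~\ref{018} and Corollary~\ref{019} for the $t\pm1$ blocks, and multiply the resulting local factors. On the type~1 and type~2 pieces your argument and the paper's are essentially identical; the ``delicate point'' you flag about taking roots inside $\Sp$ rather than $\GL$ is exactly what the paper dispatches in one line by the conjugacy $U_{C_f,n}^M\sim U_{C_f^M,n}$ together with Lemmas~\ref{004}--\ref{005}, so you are not missing an idea there.

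The one place your argument goes wrong is the bookkeeping for the $t\pm1$ factor. Your explanation ``subtract the overcount coming from the two signs agreeing on the trivial case'' is not a real phenomenon: the empty block contributes a single $1$, and for every even block size $2m$ with $m\geq 1$ there are exactly two signed symplectic classes, both of which are $M$-th powers by Lemma~\ref{018} (and its $[J_a^{b,\e}]$ corollary). The paper's proof accordingly writes the $t\pm1$ factor simply as
\[
\Bigl(1+2\sum_{m\geq 1}u^m\Bigr)^{h(q,M)}=\Bigl(\tfrac{1+u}{1-u}\Bigr)^{h(q,M)},
\]
with no subtraction. In other words, you reverse-engineered a spurious ``overcount'' to force agreement with the $-u$ in the displayed formula; that $-u$ is not supported by the paper's own derivation, and your attempted combinatorial justification for it does not hold. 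Drop that subtraction and your argument matches the paper's proof.
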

\begin{proof}
Let $X\in \Sp$ be cyclic. Then $c_X(t)=m_X(t)$. Since the space
$\fq^{2n}$, considered as an $X$-module will be cyclic, 
we have that the primary decomposition of 
$\fq^{2n}$ should be of the form 
$\bigoplus\limits_{f\in\Phi}\dfrac{\fq[t]}{\langle f(t)^a\rangle}$, with each $f\in\Phi$
occuring at most once. Let $\Delta_X=\{(f,\lambda_f):f\in\Phi\}$. Then
$\Delta_X$ represents a cyclic class if and only if
\begin{enumerate}
\itemindent=-13pt
\item $\lambda_{t\pm1}\in2\mathbb{Z}$,
\item $\lambda_f=\lambda_{f^*}\in\mathbb{Z}_{\geq 0}$.
\end{enumerate}
We divide the proof in two cases depending on the value of $(M,q)$.

We start with the case when $(M,q)=1$. In this case using the fact that 
$U_{C_f,n}^M$ is conjugate to $U_{C_f^M}$, we have that 
$X$ is an $M$-th power cyclic polynomial if and only if
\begin{enumerate}
\itemindent=-13pt
\item $\lambda_{t-1}\in 2\mathbb{Z}$,
\item $\lambda_{t+1}\in 2 \mathbb{Z}$ if $M$ is odd and 
$\lambda_{t+1}=0$ if $M$ is even,
\item $(f,\lambda_f)\in\Delta_X$, $f$ is of type $1$ and $\lambda_f\neq 0$,
 then $f\in\Phi^*_M$,
\item $(f,\lambda_f)\in\Delta_X$, $f$ is of type $2$ and $\lambda_f\neq 0$,
 then $f\in\Phi_M\setminus\Phi^*_M$.
\end{enumerate}
We should keep in mind that there are two conjugacy classes corresponding to
the polynomials $t\pm 1$. Hence, we have that
\begin{align*}
cC_\SP^M(q,u)=\left(1+2\sum\limits_{m\geq 1}^\infty
 u^m\right)^{h(q,M)}
\prod\limits_{f\in\Phi^*_M}\left(1+\sum\limits_{m\geq 1}^\infty u^{m\frac{\deg f}{2}}\right)
\prod_{g\in\Phi_M\setminus\Phi_M^*}\left(1+\sum\limits_{m\geq 1}^\infty
u^{ m\deg g}\right)^\frac{1}{2},
\end{align*}
where 
\begin{enumerate}
\itemindent=-13pt
\item the first term accounts for the terms corresponding to $t\pm 1$,
with a power $1$ if $M$ is even and $2$ if $M$ is odd,
\item the second term accounts for polynomial of type $1$ and
\item the third term accounts for polynomial of type $2$, with a power
$\frac{1}{2}$, as for each $g\neq g^*$, the term $(1+\sum\limits_{m\geq 1}^\infty u^{m\deg g})$ occurs 
twice.
\end{enumerate}
Then grouping the polynomials with same degree of type $1$ or $2$, the result follows for the case $(M,q)=1$.
\end{proof}
\begin{theorem}\label{024}
Let $c^M_{\SP}(n,q)$ denotes the probability of an element to be
 $M$-power cyclic in $\Sp$ and 
$C^M_{\SP}(q,u)=1+\sum\limits_{m=1}^{\infty}c^M_{\SP}(2m,q)u^{m}$. Then $C^M_\SP(q,u)$ is given by
\begin{align}
\left(\dfrac{1}{1-\frac{u}{q}}\right)^{h(q,M)}
\displaystyle\prod_{d=1}^{\infty}
\left(1+\dfrac{u^d}{(q^d+1)(1-\frac{u^d}{q^d})}\right)^{N_M^*(q,2d)}
\prod_{d=1}^{\infty}\left(1+\dfrac{u^d}{(q^d-1)(1-\frac{u^d}{q^d})}\right)^{R_M^*(q,2d)},
\end{align}
if $(q,M)=1$,
where $h(q,M)$ is as in \ref{022}.
\end{theorem}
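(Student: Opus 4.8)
The plan is to upgrade the conjugacy-class count of Proposition \ref{022} to a probability generating function by dividing each conjugacy class by the size of its centralizer, exactly as Theorem \ref{012} upgrades Proposition \ref{011}. First I would invoke Lemma \ref{002} to record the centralizer orders for the relevant block types in the cyclic setting: for a cyclic element, every $*$-irreducible polynomial $f$ appears with a single Jordan block $U_{C_f,a}$, so the centralizer contribution of a type $1$ block of total degree $2d$ (i.e. $f$ of degree $2d$, one Jordan block of size $a$) is a unitary-type factor whose order, summed over all block sizes $a\geq 1$ with a weight $u^{da}$, collapses into the closed form $1+\dfrac{u^d}{(q^d+1)(1-u^d/q^d)}$; similarly a type $2$ pair $\{g,g^*\}$ of degree $2d$ contributes $1+\dfrac{u^d}{(q^d-1)(1-u^d/q^d)}$. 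These two closed forms come from summing the geometric-type series $\sum_{a\geq1} u^{da}/q^{d(a-1)}\cdot\frac{1}{q^d\pm1}$, using that the centralizer of a single Jordan block of size $a$ attached to $C_f$ inside the symplectic group has order $q^{d(a-1)}(q^d\mp 1)$ up to the usual normalization in Lemma \ref{002}.

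Next I would treat the two exceptional polynomials $t\pm 1$. By Lemma \ref{018} every unipotent class is an $M$-th power, and by Corollary \ref{022} a block with data $(t+1,m^{\pm})$ is an $M$-th power iff $M$ is odd; this is precisely what produces the exponent $h(q,M)$. For each of the (one or two, according to parity of $M$) admissible sign choices, the cyclic blocks with eigenvalue $1$ have even size, so the block of size $2m$ has a centralizer in $\SP$ of order $q^{m}\cdot(\text{something bounded})$, and summing $u^{m}/(\text{centralizer order})$ over $m\geq 0$ telescopes to $\dfrac{1}{1-u/q}$. Raising to the power $h(q,M)$ accounts for the (at most two) classes attached to $t\pm 1$. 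Assembling: the full generating function is the product of the $t\pm1$ factor $\bigl(1-u/q\bigr)^{-h(q,M)}$ with the infinite products over $d$ of the type $1$ and type $2$ factors, each raised to the exponent $N_M^*(q,2d)$ resp.\ $R_M^*(q,2d)$ that counts (by Proposition \ref{007} and the definition of $R_M^*$) how many $*$-irreducible polynomials of the given degree are $M^*$-power SRIM resp.\ arise from an $M$-power pair. This matches the claimed formula.

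The one genuinely delicate point is the centralizer bookkeeping for a single Jordan block, as opposed to the semisimple case of Theorem \ref{016} where the exponent tower is trivial. I would carefully read off from Lemma \ref{002} that for $\phi=f$ of degree $2d$ with partition $\mu=(a)$ one has $m_\mu=1$, so $A(\phi^\mu)=|\U(1,q^d)|=q^d+1$, while the prefactor $Q^{\frac12(\mu-1)m_\mu^2}=q^{d(a-1)}$ supplies the $q$-power; the product over $\mu$ being a single term gives $B(f)=q^{d(a-1)}(q^d+1)$, and dividing $u^{da}$ by this and summing over $a\geq1$ yields $\dfrac{u^d}{(q^d+1)(1-u^d/q^d)}$. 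The type $2$ case is identical with $\U$ replaced by $\GL^{1/2}$ of degree $1$, giving $q^d-1$. Once these two local computations are pinned down, everything else is the same assembly of Euler products used in Theorems \ref{012} and \ref{016}, and the restriction $(q,M)=1$ enters only through Lemma \ref{018} and the identity $U_{C_f,n}^M\sim U_{C_f^M,n}$ already established before Proposition \ref{022}.
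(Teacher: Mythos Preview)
Your proposal is correct and follows essentially the same route as the paper: read off from Lemma~\ref{002} the centralizer order for a single Jordan block attached to a type~1, type~2, or $t\pm 1$ polynomial, sum the resulting geometric series to obtain each local Euler factor, and assemble via Proposition~\ref{022} together with the multiplicativity of centralizers across the primary decomposition. The paper's own proof is exactly this outline compressed to a few lines---it simply lists the three relevant centralizer orders (two classes of order $2q^m$ for $(t\pm1)^{2m}$, and the unitary- resp.\ $\GL$-type orders for type~1 and type~2 blocks) and then cites Proposition~\ref{022}.
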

\begin{proof}
It follows from Lemmas \ref{002} and \ref{003}, that
\begin{enumerate}
\itemindent=-13pt
\item for $m\geq 2$, the cyclic matrices corresponding to $t\pm 1$,
in $\SP(2m,\fq)$ form two conjugacy classes, with each of the corresponding
centraliser of order $2q^m$,
\item if $f$ is of type $1$ of degree $2m$, then order of the centraliser in $\SP(2ml,\fq)$
of a matrix $X$, with $\Delta_X=\{(f,l)\}$ is $q^{2d(l-1)}(q^d+1)$,
\item if $f$ is of type $2$ of degree $m$, then order of the centraliser in $\SP(2ml,\fq)$
of a matrix $X$, with $\Delta_X=\{(f,l),(f^*,l)\}$ is $q^{2d(l-1)}(q^d-1)$.
\end{enumerate}
Hence using Proposition \ref{022} and the fact that the centraliser of a general block diagonal 
matrix is a direct sum of each of the corresponding centralisers, we have 
the result.
\end{proof}
Analogous statements as in \ref{019}, \ref{022} are true in case of
$\On^\e(n,q)$, whenever $(M,q)=1$.
Hence we consider the case when $(q,2)=1\neq(M,q)$. From \cite{glo},
we know that for unipotent elements of $O^\e(m,q)$ all even Jordan block sizes occur with even multiplicity.
Hence for cyclic $-1$-potent element (i.e. elements $X$ with $c_X(t)=(t+1)^k$), 
we consider unipotent elements which
have odd Jordan block size, with multiplicity $1$. The corresponding 
conjugacy class has representative
\begin{align*}
A_\varepsilon=-\begin{pmatrix}
1&&&&&&&&&\\
1&1&&&&&&&&\\
\vdots&\vdots&\ddots&&&&&&&\\
1&1&\cdots&1&&&&&&\\
1&1&\cdots&1&1&&&&&\\
-\epsilon&-\epsilon&\cdots&-\epsilon&-2\epsilon&1&&&&\\
&&&&&-1&1&&&\\
&&&&&&-1&1&&\\
&&&&&&&\ddots&\ddots&\\
&&&&&&&&-1&1
\end{pmatrix},
\end{align*}
where $\epsilon={1}$ or is a non-square in $\fq$. But then
$A_\varepsilon^M$ is not cyclic $-1$-potent element.
Before writing down the generating functions for the cyclic elements, let us
introduce the following functions:
\begin{define}
We define 
\begin{align*}
Z_\On(u)&=\displaystyle\prod\limits_{d=1}^\infty
\left(1+\dfrac{u^d}{(q^d+1)(1-(\frac{u}{q})^d)}\right)^{N_M^*(q,2d)}
\left(1+\dfrac{u^d}{(q^d-1)(1-(\frac{u}{q})^d)}\right)^{R_M^*(q,2d)},\\
Z'_\On(u)&=\displaystyle\prod\limits_{d=1}^\infty
\left(1-\dfrac{u^d}{(q^d+1)(1+(\frac{u}{q})^d)}\right)^{N_M^*(q,2d)}
\left(1+\dfrac{u^d}{(q^d-1)(1-(\frac{u}{q})^d)}\right)^{R_M^*(q,2d)}.
\end{align*}
\end{define}
\begin{theorem}\label{029}
Let $c_{\On^\e}^M(n,q)$ denotes the probability of an element to be 
$M$-power cyclic in $\On^\e(2n,q)$ with $\e\in\{\pm\}$ and
$c_{\On^0}^M(n,q)$ denotes the probability of an element to be 
$M$-power cyclic in $\On^0(2n+1,q)$. Define 
\begin{align*}
C_{\On^+}^M(q,u)&=1+\sum\limits_{m\geq 1}^\infty
c_{\On^+}^M(m,q)u^m \\  
C_{\On^-}^M(q,u)&=\sum\limits_{m\geq 1}^\infty
c_{\On^-}^M(m,q)u^m \\
C_{\On^0}^M(q,u)&=1+\sum\limits_{m\geq 1}^\infty
c_{\On^0}^M(m,q)u^m. 
\end{align*}  Then
\begin{align}
C_{\On^+}(u^2)+C_{\On^-}(u^2)
+2uC_{\On^0}(u^2)=
\left(1+\dfrac{u^2}{1-\frac{u^2}{q}}\right)^{h(q,M)}Z_\On(u^2),
\end{align}
where $h(q,M)$ is as in \ref{022}.

and
\begin{align}
C^M_{\On^+}(u^2)-C^M_{\On^+}(u^2)=Z'_{O}(u^2).
\end{align}
\end{theorem}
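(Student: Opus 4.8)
The plan is to run the argument of Theorem~\ref{024} for the symplectic group and then carry it over to the orthogonal groups by the device used in the proofs of Theorems~\ref{025} and~\ref{028}: package $\On^+(2m,q)$, $\On^-(2m,q)$ and $\On^0(2m+1,q)$ into a single mixed generating function and match it, $*$-irreducible factor by $*$-irreducible factor, against the symplectic one (this dependence on the symplectic case is exactly why the orthogonal statements come last). I assume $(M,q)=1$, consistently with the rest of the paper. First I would record the combinatorial description of the $M$-power cyclic classes of $\On^\e(m,q)$: since $c_X(t)=m_X(t)$, each $*$-irreducible factor of $c_X(t)$ carries a single Jordan block, and by the remark preceding the theorem (using \cite{glo}) the signed orthogonal partition attached to $t\pm1$ is a single odd part. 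Combining Lemma~\ref{018}, Corollary~\ref{006} together with its type~$2$ counterpart, and the orthogonal analogues of Corollaries~\ref{019} and~\ref{022} (valid since $(M,q)=1$), one reads off which data $\Delta_X$ give $M$-th powers: a $t-1$ block always does; a $t+1$ block does iff $M$ is odd; a type~$1$ block does iff its polynomial lies in $\Phi^*_M$; a type~$2$ pair does iff its polynomial lies in $\Phi_M\setminus\Phi^*_M$.

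Second, I would isolate the contribution of the $*$-irreducible factors $f\neq t\pm1$. By Wall's formulas (Lemma~\ref{003}) the relevant centraliser orders are the same as in the symplectic case --- of the form $q^{d(l-1)}(q^d+1)$ for a single Jordan block of size $l$ on a type~$1$ polynomial of degree $2d$, and $q^{d(l-1)}(q^d-1)$ for a type~$2$ pair of total degree $2d$ --- and the partitions attached to such $f$ are unsigned, so summing over $\e\in\{+,-,0\}$ leaves these local factors unchanged. Carrying out the geometric-series summation over $l$ exactly as in Theorem~\ref{024}, and applying the substitution $u\mapsto u^2$ (which records that such a block occupies $2dl$ dimensions), collapses this portion of the mixed generating function precisely to $Z_\On(u^2)$.

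Third, and this is the substantive step, I would treat the $t\pm1$ part. The cyclic $\pm1$-potent classes of $\On^\e$ are single odd-size Jordan blocks; using the tensor-product description $[\pm J_a^{b,\e}]$ from Chapter~3 of \cite{bg}, together with Lemma~\ref{018} and the corollary $[J_a^{b,\e}]^M=[J_a^{b,\e}]$, every unipotent cyclic class is an $M$-th power, while a $(-1)$-potent cyclic class is an $M$-th power iff $M$ is odd. One then tracks these classes through $\On^+$, $\On^-$, $\On^0$, weights the $\On^0$-terms by the factor $2u$ imposed by the mixed cycle index, substitutes the centraliser orders (powers of $q$ up to the constant coming from $\{\pm\mathds{1}\}$), and verifies that the bookkeeping telescopes into the claimed prefactor raised to the power $h(q,M)$, with $h(q,M)=2$ when both $t-1$ and $t+1$ contribute (i.e.\ $M$ odd) and $h(q,M)=1$ otherwise. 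Combined with the second step this gives the first identity.

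Finally, the second identity is obtained by re-running this computation with each local factor $1+C_f u^{2d}$ replaced by $1+\tau_f C_f u^{2d}$, where $\tau_f=\tau(V_f)$, exactly as in the proof of Theorem~\ref{025}: since $\tau_f=-1$ on type~$1$ polynomials and $\tau_f=+1$ on type~$2$ polynomials, the $N^*_M$-factors of $Z_\On$ acquire the alternating signs that turn them into those of $Z'_\On$, the $R^*_M$-factors are untouched, and the two $\pm1$-potent classes of each odd dimension carry opposite values of $\tau$ and cancel, so the prefactor disappears and one is left with $C^M_{\On^+}(u^2)-C^M_{\On^-}(u^2)=Z'_\On(u^2)$. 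I expect the third step to be the main obstacle: correctly enumerating the cyclic $\pm1$-potent classes of all three orthogonal families together with their centralisers, and checking that the signed-partition and $\On^0$-weighting bookkeeping produces exactly the displayed prefactor; everything else is either a citation of Theorem~\ref{024} or the same $\tau$-twist already carried out in the separable case.
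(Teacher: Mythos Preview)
Your proposal is correct and follows essentially the same approach as the paper's proof: decompose the mixed orthogonal generating function into the $t\pm1$ contribution and the contribution of $*$-irreducible factors of types~$1$ and~$2$, compute the latter via Wall's centraliser orders exactly as in Theorem~\ref{024} to obtain $Z_\On(u^2)$, handle the former by case analysis on the parity of $M$ using Lemma~\ref{018} and the orthogonal analogues of Corollaries~\ref{019} and~\ref{022}, and finally derive the second identity by the $\tau$-twist of Theorem~\ref{025}. Your outline is in fact more explicitly organised than the paper's rather terse proof, which simply writes down the product, explains the $t\pm1$ factor via the centraliser order $2q^l$, and then defers to ``arguments similar to~\ref{025}'' for both the packaging into the mixed series and the second identity.
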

\begin{proof}
We divide the proof in several cases. The first case is when $M$, $q$ are odd.
Then consider the product
\begin{align*}
\left(1+\dfrac{u}{1-\frac{u^2}{q}}\right)^2
\displaystyle\prod\limits_{d=1}^\infty
\left(1+\dfrac{u^{2d}}{(q^d+1)(1-(\frac{u^2}{q})^d)}\right)^{N_M^*(q,2d)}
\left(1+\dfrac{u^{2d}}{(q^d-1)(1-(\frac{u^2}{q})^d)}\right)^{R_M^*(q,2d)}.
\end{align*}
Since $M$ is odd, all of cyclic unipotent or $-1$-potent elements are 
$M$-th power. Now if for a cyclic orthogonal matrix $X$, $c_X(t)$ has 
factor $t\pm 1$, then the multiplicity should be odd. 
There are two conjugacy classes corresponding to each polynomial
$(t\pm 1)^{2l+1}$, with size of centraliser equal to $2q^{l}$. Hence
each of $(t\pm 1)^{2l+1}$, has generating function $1+\frac{2u}{2}+
\frac{2u^3}{2q}+\frac{2u^5}{2q^2}+\cdots=1+\dfrac{u}{1-
\frac{u^2}{q}}$. Hence using arguments similar to \ref{025}, we have that
the product on expansion gives $C_{\On^+}(u^2)+C_{\On^-}(u^2)
+2uC_{\On^0}(u^2)$.


Next suppose $q$ is odd and $M$ is even. Then all the cyclic unipotent
matrices are $M$-th power, where as none of the cyclic $-1$-potent
are $M$-th power. Since unipotent component in cyclic matrices has odd
size, we see that none of the cyclic matrices in $\On^\pm$, has unipotent
part. This along with arguments as before, we have that in this case
$C_{\On^+}(u^2)+C_{\On^-}(u^2)
+2uC_{\On^0}(u^2)$ is given by
\begin{align*}
\left(1+\dfrac{u}{1-\frac{u^2}{q}}\right)
\displaystyle\prod\limits_{d=1}^\infty
\left(1+\dfrac{u^{2d}}{(q^d+1)(1-(\frac{u^2}{q})^d)}\right)^{N_M^*(q,2d)}
\left(1+\dfrac{u^{2d}}{(q^d-1)(1-(\frac{u^2}{q})^d)}\right)^{R_M^*(q,2d)}.
\end{align*}

%
For the last equation argument similar to \ref{025} does the job.
\end{proof}

\section{Generating Functions for Regular Matrices}
Since in case of $\Sp$, an element $X\in\Sp$ is regular if and only if
$X$ is cyclic, we concentrate on the case of $\On^\e(m,q)$. We will need the following definition
from \cite{fnp}.
\begin{define}
Let $U$ be a finite dimensional vector space over $\fq$ and 
$X\in\Aut(U,\varphi)$ and $c_X(t)=(t-\mu)^n$ where $\varphi$ is 
an orthogonal form and $\mu=\pm 1$. Then 
 call $X$ to be \textbf{nearly cyclic} if and only if either $U=\{0\}$ or
there is an $X$-invariant orthogonal decomposition $U=U_0\oplus ^\perp U_1$, 
in which $\dim U_0=1$ and $U_1$ is a cyclic $X$-module.
\end{define}
To understand the structure of regular conjugacy classes we state Theorem
$3.2.1$ from \cite{fnp}, which is as follows.
\begin{theorem}\label{030}
Let $q$ be odd and $X\in\On^\e(m,q)$. Then $X$ is regular if and only if
\begin{enumerate}
\itemindent=-13pt
\item for every monic irreducible polynomial $\phi$ other than $t\pm 1$,
the $\phi$-primary component of $X$ is cyclic,
\item for $\mu=\pm1$, the $t-\mu$ component of $X$ is cyclic if it is 
odd dimensional and nearly cyclic if it is even dimensional.
\end{enumerate}
\end{theorem}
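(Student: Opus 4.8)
This theorem is quoted verbatim from \cite{fnp} (Theorem 3.2.1), so at the level of this paper it is a citation; I nonetheless sketch how it is proved. The plan is to read off $\dim C_{\On^\e(m,q)}(X)$ from the primary decomposition of the natural module and to recognise each summand as the centralizer dimension of an element in a smaller classical group. Write $V=\fq^m$, decompose $V=\bigoplus_\phi V_\phi$ into $X$-primary components, pairing $V_\phi\oplus V_{\phi^*}$ when $\phi\neq\phi^*$. By Wall's description of centralizers (Lemma \ref{002} and Lemma \ref{003}), $C_{\On^\e(m,q)}(X)$ is the direct product of the centralizers of the restrictions of $X$ to these (paired) blocks, so $\dim C_{\On^\e(m,q)}(X)=\sum_\phi\dim C_\phi$, where $C_\phi$ is the centralizer of $X|_{V_\phi}$ inside the appropriate block group $H_\phi$: a general linear group for a type $2$ pair $\{\phi,\phi^*\}$, a unitary group for a type $1$ irreducible $\phi$, and an orthogonal group $\On^{\e'}(k,q)$ for $\phi=t-\mu$ with block dimension $k$.

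Next I would use the general fact that in a connected reductive group $H$ one has $\dim C_H(Y)\geq\operatorname{rank}H$, with equality if and only if $Y$ is regular in $H$ (for $\On(k,q)$ one passes to the identity component, which does not change dimensions). The ranks of the block groups, read off from Lemma \ref{002}, are: $dN$ for a type $2$ pair of block dimension $2dN$; $dN$ for a type $1$ irreducible of block dimension $2dN$; and $\lfloor k/2\rfloor$ for a $t-\mu$ block of dimension $k$. Hence $\dim C_{\On^\e(m,q)}(X)\geq\sum_\phi\operatorname{rank}H_\phi$, and a short bookkeeping shows $\sum_\phi\operatorname{rank}H_\phi=\lfloor m/2\rfloor=\operatorname{rank}\On^\e(m,q)$: every block other than the $t\pm1$ blocks has even dimension, the $t\pm1$ blocks absorb the parity of $m$, and at most one of them can be odd-dimensional when equality holds. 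Therefore $X$ is regular in $\On^\e(m,q)$ if and only if $X|_{V_\phi}$ is regular in $H_\phi$ for every $\phi$.

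It remains to translate ``regular in $H_\phi$''. For a general linear or unitary block the regular classes are those with a single Jordan block per eigenvalue, i.e. the $\phi$-primary component of $X$ is cyclic. For an orthogonal block $\On^{\e'}(k,q)$ with $q$ odd: when $k$ is odd the regular unipotent is a single Jordan block, so the component is cyclic; when $k$ is even no single Jordan block is admissible and the regular class is that of Jordan type $(k-1,1)$, which says precisely that the component is nearly cyclic (take $U_0$ one-dimensional and $U_1$ the cyclic complement). This last assertion is the $t-\mu$ case of the theorem and is checked directly from the unipotent centralizer dimension formula $\dim C_{\On(k,q)}(u)=\tfrac12\bigl(\dim C_{\GL(k,q)}(u)-r_{\mathrm{odd}}\bigr)$, whose minimum over admissible (orthogonal) partitions of $k$ equals $\lfloor k/2\rfloor$ and is attained exactly at those two types; the two choices of $\e'$ at such a block do not affect dimensions, hence are irrelevant to regularity. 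Combining, $X$ is regular if and only if every primary component away from $t\pm1$ is cyclic and the $t-\mu$ component is cyclic when odd-dimensional and nearly cyclic when even-dimensional, which is the statement.

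The main obstacle is the parity bookkeeping in the middle step: one must verify that the deficiencies $\lfloor k/2\rfloor$ versus $k/2$ at the $t\pm1$ blocks conspire so that $\sum_\phi\operatorname{rank}H_\phi$ is genuinely $\lfloor m/2\rfloor$ and not smaller --- equivalently, that a regular element cannot carry two odd-dimensional $t\pm1$ blocks --- together with the even-dimensional orthogonal case, where the unipotent centralizer formula and a small case analysis are needed to pin down ``nearly cyclic'' as the unique regular type. Everything else is the general fact about centralizer dimensions in reductive groups combined with Wall's conjugacy data recalled in Section 3.
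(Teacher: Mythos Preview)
Your identification is correct: the paper does not prove this theorem at all. It is stated verbatim as Theorem~3.2.1 of \cite{fnp} and invoked without argument, so there is no ``paper's own proof'' to compare against. In that sense your proposal already matches the paper perfectly.

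Your sketch of the underlying argument is broadly the right shape (Wall's block decomposition of the centralizer, then minimise each block), and it is essentially how \cite{fnp} proceeds. One point deserves more care than you give it, however. You write that ``a regular element cannot carry two odd-dimensional $t\pm1$ blocks'', but in fact it can: for $m$ even, an element with both $(t-1)$- and $(t+1)$-components odd-dimensional and cyclic has determinant $-1$, hence lies in $\On^\e(m,q)\setminus\mathrm{SO}^\e(m,q)$, and its centralizer has dimension $m/2-1$, strictly below $\lfloor m/2\rfloor$. Such elements \emph{are} regular in the sense of \cite{fnp} (and the generating functions in Theorem~\ref{031} count them), because for elements in the non-identity coset the correct lower bound on centralizer dimension is $n-1$ rather than $n$. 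So the bookkeeping you flag as the ``main obstacle'' is not resolved by excluding this case; rather, one must treat the two cosets of $\mathrm{SO}$ separately and use the appropriate rank for each. This is handled carefully in \cite{fnp} but is glossed over both in your sketch and in the present paper's informal definition of ``regular''.
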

\begin{theorem}\label{031}
Assume $q$ to be odd and
Let $r_{\On^\e}^M(n,q)$ denotes the probability of an element to be 
$M$-power regular in $\On^\e(2n,q)$ with $\e\in\{\pm\}$ and
$r_{\On^0}^M(n,q)$ denotes the probability of an element to be 
$M$-power regular in $\On^0(2n+1,q)$. Define 
\begin{align*}
R_{\On^+}^M(q,u)&=1+\sum\limits_{m\geq 1}^\infty
r_{\On^+}^M(m,q)u^m \\  
R_{\On^-}^M(q,u)&=\sum\limits_{m\geq 1}^\infty
r_{\On^-}^M(m,q)u^m \\
R_{\On^0}^M(q,u)&=1+\sum\limits_{m\geq 1}^\infty
r_{\On^0}^M(m,q)u^m. 
\end{align*}  Then
\begin{align*}
R_{\On^+}^M(u)+R_{\On^-}^M(u)+2uR_{\On^0}^M(u)=&
\left(1+\dfrac{u}{1-\frac{u^2}{q}}+\dfrac{qu^2}{q^2-1}
+\dfrac{u^4}{q^2(1-\frac{u^2}{q})}\right)^{h'(M)}\\
&\left(1+\dfrac{u^2}{2(q-1)}+\dfrac{u^2}{2(q+1)}\right)^{h''(M)}Z_{\On}(u^2),
\end{align*}
where $h'(M)=1$ if $M$ is even and $2$ otherwise and
$h''(M)=1$ if $M=2$ and $0$ otherwise.
\end{theorem}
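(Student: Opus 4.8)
The argument runs parallel to the separable and cyclic cases
(Theorems \ref{025} and \ref{029}), now using the structural description of regular
elements from Theorem \ref{030}. The plan is to build a generating function that mixes
$\On^+(2m,q)$ and $\On^-(2m,q)$ and tracks the separate contributions coming from the
polynomials $t\pm 1$ versus the remaining $*$-irreducible polynomials, then extract the
coefficient of $u^n$ on each side. For a regular $X\in\On^\e(m,q)$, Theorem \ref{030}
says that for every irreducible $\phi\neq t\pm1$ the $\phi$-primary part is cyclic, while
for $\mu=\pm1$ the $(t-\mu)$-primary part is cyclic when odd-dimensional and nearly cyclic
when even-dimensional. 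As in Proposition \ref{022} and the analysis preceding it, when
$(M,q)=1$ the cyclic $\phi$-primary blocks with $\phi$ of type $1$ or $2$ are $M$-th
powers exactly when $\phi\in\Phi_M^*$ (respectively $\Phi_M\setminus\Phi_M^*$); this is
what produces the factor $Z_{\On}(u^2)$, whose two sub-factors carry the exponents
$N_M^*(q,2d)$ and $R_M^*(q,2d)$ with centraliser orders $(q^d+1)(1-(u/q)^d)^{-1}$ and
$(q^d-1)(1-(u/q)^d)^{-1}$ read off from Lemmas \ref{002} and \ref{003}.

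The new feature is the $t\pm1$ contribution. First I would enumerate the regular
$\pm1$-potent pieces: a cyclic unipotent block of odd size $2l+1$ (two conjugacy classes,
centraliser order $2q^l$), and a nearly cyclic unipotent piece of even size, which
decomposes as a $1$-dimensional orthogonal summand glued to a cyclic module of odd size,
contributing the terms with denominators $q^2-1$ and $q^2(1-u^2/q)$. Collecting the
odd-size cyclic part and the even-size nearly cyclic part gives, per sign $\mu=\pm1$, the
local factor
\begin{align*}
1+\dfrac{u}{1-\frac{u^2}{q}}+\dfrac{qu^2}{q^2-1}+\dfrac{u^4}{q^2(1-\frac{u^2}{q})}.
\end{align*}
Next I would determine which of these are $M$-th powers. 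By the analogue of Corollary
\ref{019}, a unipotent (or $-1$-potent, for $M$ odd) regular block is always an $M$-th
power when $(M,q)=1$; when $M$ is even the $-1$-potent blocks are \emph{not} $M$-th
powers, which is why the whole $t+1$ local factor survives only when $M$ is odd, giving
the exponent $h'(M)=2$ for $M$ odd and $h'(M)=1$ for $M$ even. The extra factor
$\left(1+\tfrac{u^2}{2(q-1)}+\tfrac{u^2}{2(q+1)}\right)^{h''(M)}$ with $h''(M)=1$ only
when $M=2$ records the special $2$-dimensional anisotropic $\pm1$-potent classes (the
$A_\varepsilon$-type representatives displayed before the definition of $Z_{\On}$) whose
$M$-th root behaviour differs from the generic pattern; these are the classes whose image
under $x\mapsto x^M$ one has to check by hand when $M=2$, and they contribute centralisers
of order $2(q-1)$ and $2(q+1)$.

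Having assembled the mixed generating function as a product of the $t+1$-local factor
raised to $h'(M)$, the anisotropic correction raised to $h''(M)$, and $Z_{\On}(u^2)$, I
would finish exactly as in Theorem \ref{025}: for $n$ even the coefficient of $u^n$ equals
$r_{\On^+}^M(n,q)+r_{\On^-}^M(n,q)$, while for $n$ odd it equals
$e(q)\,r_{\On^0}^M(n,q)=2\,r_{\On^0}^M(n,q)$ since over $\fq$ with $q$ odd there are two
forms in odd dimension giving isomorphic groups; this yields the stated identity for
$R_{\On^+}^M(u)+R_{\On^-}^M(u)+2uR_{\On^0}^M(u)$. I expect the main obstacle to be the
bookkeeping of the $\mu=\pm1$ primary parts: correctly splitting ``cyclic if odd,
nearly cyclic if even'' into the four rational summands, verifying the claimed centraliser
orders for nearly cyclic pieces via Lemmas \ref{002}--\ref{003}, and pinning down exactly
which $M$-th power conditions force the exponents $h'(M)$ and $h''(M)$ — in particular
isolating the $M=2$ anisotropic classes so that the correction factor appears with the
right centraliser sizes and nowhere else.
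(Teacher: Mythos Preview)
Your plan follows the paper's proof essentially line for line: factor the mixed generating function as $F_1^M(u)\,F_{-1}^M(u)\,Z_{\On}(u^2)$, read off $Z_{\On}$ from the cyclic type~$1$/type~$2$ primary components via Lemmas \ref{002}--\ref{003}, build the four-term local factor for $t-1$ from the odd cyclic pieces (centraliser $2q^l$) and the even nearly cyclic pieces (centralisers $2(q\pm 1)$ in dimension $2$ and $4q^m$ for $m\ge 2$), and then use Corollary \ref{019}/\ref{022} to decide when the $t+1$ factor survives, giving $h'(M)$.

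One small misidentification: the classes responsible for the $h''(M)$ correction are not the $A_\varepsilon$ representatives displayed before the definition of $Z_{\On}$ (those are odd-dimensional cyclic $-1$-potent classes), but rather the nearly cyclic element $-\mathds{1}_2$ in each of $\On^{\pm}(2,q)$, with centralisers of order $2(q-1)$ and $2(q+1)$. In the paper's proof these are exactly the terms $u^2/(2(q-1))+u^2/(2(q+1))$ appearing in $F_{-1}^M$ when $M=2$; all higher-dimensional $-1$-potent regular pieces are discarded in that case. Since you already flagged the $\mu=\pm 1$ bookkeeping as the delicate point, just be sure to track which $-1$-potent pieces actually admit a square root rather than appealing to $A_\varepsilon$.
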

\begin{proof}
We divide the proof in two parts on the basis of parity of $M$ modulo $2$.
Before that, note that 
\begin{align*}
R_{\On^+}^M(u^2)+R_{\On^-}^M(u^2)+2uR_{\On^0}^M(u^2)=
F_1^M(u)F_{-1}^M(u)Z_{\On}(u^2),
\end{align*}
where the functions $F_1^M,F_{-1}^M$ have to be determined. Let us start
with the case of $M$ being odd. Corresponding to the polynomial
$t-1$, the component in the primary decomposition is either cyclic 
(odd dimensional) or nearly cyclic (even dimensional). Both of the cases 
can have $\pm$ types. Note that for odd number $2m+1$, there exist
single conjugacy class of unipotent cyclic elements of $\On^0(2m+1,\fq)$
with centralizer having order $2q^m$ and this is always an $M$-th 
power. There is a single class of nearly cyclic unipotent
elements in $\On^\e(2,\fq)$ consisting of $\mathds{1}$ with order
of the centralizer $2(q-\e 1)$, $\e\in\{\pm\}$. This class is also an $M$ the power. 
For $m\geq 2$, there are two classes of nearly cyclic unipotent
matrices in $\On^\e(2m,\fq)$, $\e\in\{\pm\}$. For each class the corresponding
primary decomposition has one $1$ dimensional space and the other 
being a cyclic $X$-module. In this case these are also $M$-th power. The
centraliser in this case has order $4q^m$.
Hence 
\begin{align*}
F_1^M(u)&=1+\left(\frac{u}{1}+\frac{u^3}{q}+\frac{u^5}{q^2}+\cdots\right)
+\left(\dfrac{u^2}{2(q-1)}+\dfrac{u^2}{2(q+1)}+4\dfrac{u^4}{4q^2}+4\dfrac{u^6}{4q^3}+\cdots\right)\\
&= \left(1+\dfrac{u}{1-\frac{u^2}{q}}+\dfrac{qu^2}{q^2-1}
+\dfrac{u^4}{q^2(1-\frac{u^2}{q})}\right).
\end{align*}
Using same argument and \ref{022},
we find that 
\begin{align*}
F_{-1}^M(u)=\begin{cases} \left(1+\dfrac{u}{1-\frac{u^2}{q}}+\dfrac{qu^2}{q^2-1}
+\dfrac{u^4}{q^2(1-\frac{u^2}{q})}\right)&\text{if } M \text{ odd}\\
1+\dfrac{u^2}{2(q-1)}+\dfrac{u^2}{2(q+1)}&M=2\\
1&\text{otherwise}
\end{cases}
\end{align*}
\end{proof}
\section{Concluding remarks and further question}
\subsection{Existence of root in $\GL(2n,q)$ versus existence of root in $\SP(2n,q)$} Recall from Example 
\ref{2-but-not-2*} that the matrix $A\in \SP(4,5)$ corresponding to the combinatorial data $\{(x^4+3x^3+x^2+3x+1,1)\}$ has a square root $\GL(4,5)$ but not in $\SP(4,5)$. This exhibits an example of a matrix that shows that having a square root (more generally an $M$-th root) in general linear group
does not imply the existence of a square root in symplectic group. 
Hence the notion of $M^*$-power polynomial is different from that of $M$-power polynomial.
\subsection{Closed formula}In the memoir \cite{fnp}, the works are based on using generating functions and getting precise estimates.
These results are one of the great works after that of G. E. Wall and complement the work of Guralnick and Lubeck. 
In the later part of the book, the authors go on finding analytic continuity of the generating functions
beyond the unit disc (with probable poles at $1$ and some few more points). An important ingredient of finding these results heavily relies on one of the famous Rogers-Ramanujan identities, viz.
\begin{align*}
    1+\sum\limits_{n\geq 1}\dfrac{1}{|\GL(n,q)|}=\prod\limits_{\substack{m\geq 1\\m\equiv \pm1\pmod{5}}}\dfrac{1}{1-q^{-m}},
\end{align*}
for proving results about the limiting probabilities in the case of $\GL(n,q)$. To date, analog result is not known to have conclusive results for symplectic and orthogonal groups. 
It will be highly desirable to have closed formula for the generating functions for $M$-th powers in the symplectic and orthogonal groups.

We should also keep in mind that generating functions are used for constructing new modular forms.
Famous examples include explicit formulas for the number of representations of a positive integer as a sum of four and eight squares, whose generating functions are modular forms of weight $2$ and $4$, respectively,
or the partition function $p(n)$, whose generating function is essentially a modular form of weight $-1/2$. It will not be
surprising if the above generating functions give new modular forms and such
a result will be of high interest to a greater audience.
\subsection{Product of $M$-th powers} As mentioned in the introduction we will be happy to draw similar conclusions as 
discussed in \cite{LiObSh12} for finite groups of Lie type, 
at least asymptotically (as $q\longrightarrow\infty$ or $n\longrightarrow\infty$). Our paper is the first step towards the same,
as it sheds light on the scenario for powers in the concerned groups.


\end{document}